\renewcommand{\subset}{\subseteq}
\newtheorem{theorem}{Theorem}[section]
\newtheorem{lemma}[theorem]{Lemma}
\newtheorem{proposition}[theorem]{Proposition}
\newtheorem{remark}[theorem]{Remark}
\newtheorem{conjecture}[theorem]{Conjecture}
\newtheorem*{conjecture-no}{Conjecture}
\newtheorem{definition}[theorem]{Definition}
\newtheorem{corollary}[theorem]{Corollary}
\author{Maxim Gurevich}
\address{Department of Mathematics, National University of Singapore, 10 Lower Kent Ridge Road, Singapore, 119076}
\email{matmg@nus.edu.sg}
\thanks{The author is partially supported by the ISF grant 756/12, and ERC StG grant 637912.}
\date{\today}
\newcommand{\gotM}{\mathfrak{m}}
\newcommand{\gotN}{\mathfrak{n}}
\newcommand{\wid}{\omega}
\DeclareMathOperator{\irr}{Irr}
\DeclareMathOperator{\seg}{Seg}
\DeclareMathOperator{\supp}{supp}
\keywords{$p$-adic groups, representation theory, decomposition multiplicities, pattern avoidance}
\begin{document}

\title[Decomposition rules]{Decomposition rules for the ring of representations of non-Archimedean $GL_n$}

\begin{abstract}




Let $\mathcal{R}$ be the Grothendieck ring of complex smooth finite-length representations of the sequence of p-adic groups $\{GL_n(F)\}_{n=0}^\infty$, with multiplication defined through parabolic induction. We study the problem of the decomposition of products of irreducible representations in $\mathcal{R}$.

We obtain a necessary condition on irreducible factors of a given product by introducing a width invariant. Width $1$ representations form the previously studied class of ladder representations.

We later focus on the case of a product of two ladder representations, for which we establish that all irreducible factors appear with multiplicity one.

Finally, we propose a general rule for the composition series of a product of two ladder representations and prove its validity for cases in which the irreducible factors correspond to smooth Schubert varieties.

\end{abstract}

\maketitle

\section{Introduction}
Let $F$ be a local non-Archimedean field. Let $\mathcal{R}_n$ be the Grothendieck group associated with the category $\mathfrak{R}_n$ of complex-valued smooth finite-length representations of the group $GL_n(F)$. Given two smooth representations $\pi_i$ of $GL_{n_i}(F)$ ($i=1,2$), $\pi_1\times \pi_2$ is defined as the parabolic induction of $\pi_1\otimes \pi_2$ to $GL_{n_1+n_2}(F)$ from a block upper-triangular maximal Levi subgroup. This product operation equips the group $\mathcal{R} = \oplus_{n\geq 0} \mathcal{R}_n$ with a structure of a commutative ring.

By Zelevinski's work \cite{Zel} the ring is known to be a polynomial ring over $\mathbb{Z}$ in infinitely many variables. One way to observe this is by recalling the Langlands classification, which gives a bijection from the irreducible representations $\irr = \cup_{n\geq 0} \irr(GL_n(F))$ to the so-called standard representations. The collection of standard representations is closed under multiplication and gives a basis to $\mathcal{R}$ as a free abelian group. In particular, the essentially square-integrable (segment representations) elements of $\irr$ freely generate $\mathcal{R}$ as a polynomial ring over $\mathbb{Z}$.

The collection $\irr$ itself gives a different basis to $\mathcal{R}$ as a free abelian group. Our note joins an effort to describe the multiplicative structure of $\mathcal{R}$ in terms of this natural basis. \textit{Given $\pi_1,\pi_2\in \irr$ our goal is to describe the irreducible composition factors of $\pi_1\times\pi_2$ and to compute their multiplicities.}

While our interest in the decomposition problem arose in the setting of $p$-adic groups, this question has a surprisingly omnipresent nature throughout different settings defined by Lie data of type $A$.

First, it is long known \cite{borel-equiv} that the subcategory of $\mathfrak{R}_n$ consisting of representations that admit a Iwahori-invariant vector is equivalent to the category of finite-dimensional modules over the affine Hecke algebra $H(n,F)$ attached to $GL_n(F)$. These equivalences are compatible for our needs, in the sense that they transforms parabolic induction into module induction from a sub-algebra $H(n_1,F)\otimes H(n_2,F)$ to $H(n_1+n_2,F)$. Moreover, $\mathfrak{R}_n$ admits the Bernstein decomposition into components. Each component was shown, in \cite{bk-ss, bk-book} by type theory or in \cite{heier-cat} by other means, again to be equivalent to a category of modules over affine Hecke algebras. See, for example, \cite[Section 3.1]{me-restriction} for a more detailed description. The resulting picture is that our problem may be studied on the level of affine Hecke algebras.

Furthermore, one can take $D$ as a local non-Archimedean division algebra, and consider the smooth representation theory of the groups $GL_m(D)$. Yet again, the Bernstein components of these categories, either by type theory \cite{innform-simple, sech-steve} or by the same result \cite{heier-cat} as in the split case, are known be equivalent to categories of modules over the same affine Hecke algebras. Thus, using these categorical equivalences, our results below remain valid when asked (in the analogous sense, such as discussed, for example, in \cite{LM2}) in the division algebra setting\footnote{A potential alternative approach is to take $D$ in place of $F$ for the full scope of the paper. Yet, as mentioned by an anonymous referee, some claims, such as Lemma \ref{prop-gen}, would necessitate new direct proofs.  }.

Let us also mention the quantum affine Schur-Weyl duality of \cite{cp-duality}, which sends irreducible modules of $H(n,F)$ to irreducible modules of the quantum affine algebra $U_q(\hat{\mathfrak{sl}}_N)$, defined for the affine Lie algebra $\hat{\mathfrak{sl}}_N$ with large enough $N$. Since induction is transformed into tensor product by that duality functor, our problem can also take the form of the decomposition of tensor products of irreducible modules of quantum affine algebras (see \cite[Section 4.1]{lnt} for details).

Our decomposition problem appears in the study of quantum groups from yet another perspective. The ring $\mathcal{R}$ (or rather, its ``principle part") can be viewed as a specialization in $q=1$ of the quantum enveloping algebra $U_q^+(\mathfrak{gl}_\infty)\cong U_q(\mathfrak{n}_\infty)$, for the infinite dimensional Lie algebra $\mathfrak{gl}_\infty = \cup_n \frak{gl}_n$. This type of algebra  is known to be equipped with Lusztig's notion of a canonical basis (or equivalently, Kashiwara's crystal bases). In fact, the basis $\irr$ of the ring $\mathcal{R}$ corresponds to the specialization the dual canonical basis (or upper global crystal basis) of $U_q^+(\mathfrak{gl}_\infty)$, as shown in \cite{lnt} (based on \cite{bz-strings}) and \cite{groj} by two different approaches. The multiplicative properties of the dual canonical basis, such as those studied in \cite{bz-strings} , specialize in $q=1$ back to our questions.

Coming back to $\mathcal{R}$, it is known (see \cite{Hender} for an efficient description) that the transition matrix between the two mentioned above bases, i.e.~standard and irreducible representations, is given by values at $1$ of Kazhdan-Lusztig polynomials for symmetric groups. Thus, given $\pi_1,\pi_2\in \irr$, the irreducible factors of $\pi_1\times \pi_2$ can in principle be determined by computing these polynomials.

However, there is hope that the complexity involved in such a computation may be overcome by applying more direct methods on the problem. For example, as a special case of the problem at hand one can ask whether $\pi_1\times \pi_2$ is already irreducible. A classical result due to Bernstein \cite{bern} gives a positive answer for that question whenever $\pi_1,\pi_2$ are unitarizable. The results of Lapid and M\'{i}nguez \cite{LM2, LM3} further deal with the irreducibility question and supply direct combinatorial criteria in some cases. The thesis of Deng Taiwang \cite{deng} gives rules for decomposition in the case that $\pi_1$ is a segment representation.

One class of irreducible representations which was shown to be susceptible to such methods is that of \textit{ladder} representations, introduced to our setting in \cite{LM}. The accumulated work throughout various settings of representation theory has exhibited remarkable properties of this class and its analogs. In \cite{ram}, Ram identified ladder representations in the context of affine Hecke algebras as precisely those irreducible representations whose Jacquet modules (when translated back to $p$-adic groups terminology) are completely reducible. Using other tools, Kret and Lapid \cite{LapidKret} gave a full description of all Jacquet modules of ladder representations.

Similar phenomena, up to some category equivalences, were previously observed in \cite{naztar, chered-gz} for the analog notion of tame modules of Yangians. Furthermore, \cite{dandan} gives an interpretation of the role of ladder representations for degenerate affine Hecke algebras as analogous to that of finite-dimensional representations in category $\mathcal{O}$. In the context of canonical bases, ladder representations correspond to the quantum minors of \cite{bz-strings}. Back in the setting of $GL_n(F)$, so-called distinction problems were shown in \cite{FLO, my-jaccon, mos} to be especially approachable when dealing with ladder representations.

\subsection*{Results}
The first result of this work presents a new invariant of finite-length representations which attempts to quantify the distance of a given representation from the well-behaved properties of ladder representations.

Given a supercuspidal $\rho\in \irr$, we write $\irr_{\langle \rho \rangle}$ for the set of elements of $\irr$ whose supercuspidal support consists of representations on the supercuspidal line $\{\rho\otimes |\det|^n\}_{n\in\mathbb{Z}}$. Let $\mathcal{R}_{\langle \rho \rangle}$ be the group (which is a subring) generated by $\irr_{\langle \rho \rangle}$ in $\mathcal{R}$. It is enough to study the multiplicative structure of $\mathcal{R}_{\langle \rho \rangle}$
for a fixed $\rho$, since any $\pi\in\irr$ is uniquely decomposed as $\pi= \pi_1\times\cdots\times \pi_k$, where $\pi_i \in \irr_{\rho_i}$, for $\{\rho_i\}$ that belong to disjoint supercuspidal lines.

Recall that the Zelevinski classification\footnote{ Throughout the paper we work with the Langlands classification, but we refer to it in terms of Zelevinski's multisegments. The classifications are dual to each other, as explained for example in \cite{LM2}.} \cite{Zel} of $\irr_{\langle \rho \rangle}$ describes each irreducible representation as a multiset of segments, i.e.~intervals of the form $[a,b],\,a,b\in\mathbb{Z}$. A ladder representation would then be given by a set of the form $\{[a_i,b_i]\}_{i=1}^k$, with $a_1<\ldots<a_k$ and $b_1<\ldots<b_k$.

For any $\pi\in \irr_{\langle \rho \rangle}$, we call the \textit{width} $\wid(\pi)$ of $\pi$ to be the minimal number of ladder sets of segments required to cover the multiset attached to $\pi$. Our first result claims that $\wid$ serves as a length function for an increasing filtration on the ring $\mathcal{R}_{\langle \rho \rangle}$ in the following sense.

\begin{theorem}[Corollary \ref{cor-wid}]\label{main-1}
Let $\rho\in \irr$ be a supercuspidal representation. For $n\in\mathbb{N}$, let $\mathcal{M}_n$ be the subgroup generated in $\mathcal{R}_{\langle \rho \rangle}$ by all $\pi\in \irr_{\langle \rho \rangle}$ with $\wid(\pi)\leq n$. Then,
\[
\mathcal{M}_i\cdot \mathcal{M}_j\subset \mathcal{M}_{i+j},\qquad\forall i,j\in\mathbb{N}.
\]
\end{theorem}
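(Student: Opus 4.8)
The plan is to turn the filtration property into an upper bound on the widths of the irreducible factors of a product, reduce — using exactness of parabolic induction — to the case in which one factor is a ladder representation, and then settle that case combinatorially at the level of multisegments. \emph{Reformulation.} By definition $\mathcal{M}_n$ is the $\mathbb{Z}$-span of the $\langle\mathfrak{m}\rangle$ with $\wid(\mathfrak{m})\le n$, and the product in $\mathcal{R}$ is exact, so $[\langle\mathfrak{m}_1\rangle\times\langle\mathfrak{m}_2\rangle]$ is an honest non-negative combination of irreducibles and, moreover, the product is monotone on effective classes: if $[\tau]-[\pi]$ is effective then so is $[\tau\times\sigma]-[\pi\times\sigma]$. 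Hence $\mathcal{M}_i\cdot\mathcal{M}_j\subseteq\mathcal{M}_{i+j}$ is equivalent to: for all $\pi_1,\pi_2\in\irr_{\langle\rho\rangle}$, every irreducible constituent of $\pi_1\times\pi_2$ has width at most $\wid(\pi_1)+\wid(\pi_2)$.

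\emph{Reduction to a ladder factor.} Decompose $\mathfrak{m}(\pi_1)=L_1\sqcup\cdots\sqcup L_k$ into $k=\wid(\pi_1)$ ladder sets. For any such partition, $\langle\mathfrak{m}(\pi_1)\rangle$ occurs in $\langle L_1\rangle\times\cdots\times\langle L_k\rangle$ — this is a standard consequence of the Langlands/Zelevinski classification (the irreducible attached to a multisegment is the unique irreducible sub-, or quotient-, module of the corresponding standard module) together with exactness of induction. By monotonicity, the irreducible constituents of $\pi_1\times\pi_2$ are among those of $\langle L_1\rangle\times\cdots\times\langle L_k\rangle\times\pi_2$, so by an immediate induction on $k$ it suffices to prove the case $\wid(\pi_1)=1$: \textbf{for a ladder set $L$ and any $\sigma\in\irr_{\langle\rho\rangle}$, every irreducible constituent of $\langle L\rangle\times\sigma$ has width at most $\wid(\sigma)+1$.}

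\emph{The ladder case, and the main obstacle.} Here I would argue at the level of multisegments. Every constituent $\langle\mathfrak{m}'\rangle$ of $\langle L\rangle\times\langle\mathfrak{m}\rangle$ should be reachable from the disjoint union $L\sqcup\mathfrak{m}$ by a sequence of elementary crossing operations, each replacing a pair $\{\Delta,\Delta'\}$ of linked segments — one descended from $L$, the other from $\mathfrak{m}$ — by $\{\Delta\cup\Delta',\Delta\cap\Delta'\}$ (this is the content of the description of parabolic induction by a ladder, in the spirit of Lapid--M\'{i}nguez). Keeping a two-colouring along the process (descendants of $L$ versus descendants of $\mathfrak{m}$), the goal is to show that the first colour class always stays a ladder set while the second never exceeds width $\wid(\mathfrak{m})$, so that $\wid(\mathfrak{m}')\le 1+\wid(\mathfrak{m})$ by subadditivity of $\wid$ under disjoint unions. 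This combinatorial statement is the crux and the main difficulty: a single elementary operation can already increase the maximal-antichain number $\wid$, and not every such operation on $L\sqcup\mathfrak{m}$ produces a genuine constituent of the product; the argument must exploit that the operations which do occur cross the segments of $L$ with those of $\mathfrak{m}$ in a monotone, non-crossing pattern, and it is exactly this monotonicity which confines the width increase to a single extra ladder. Pinning down the correct invariant on each colour class and verifying that it is preserved under the permitted operations is where essentially all of the work lies.
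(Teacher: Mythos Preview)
Your reformulation and reduction are correct and agree with the paper's: both reduce to bounding the width of the irreducible constituents of a product of ladder representations. (The paper in fact writes \emph{both} $\pi_1$ and $\pi_2$ as sums of ladders, reducing directly to the statement ``a product of $k$ ladders has width $\le k$''; your inductive reduction to a single ladder factor is an equivalent formulation.)

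The gap is in your ``ladder case.'' You propose to argue at the level of multisegments via elementary union/intersection operations with a two-colouring, but this is not a proof: you yourself identify that a single elementary operation can increase the width, and the claimed ``monotone, non-crossing'' constraint on the operations that actually occur in $\langle L\rangle\times\sigma$ is neither stated precisely nor justified. There is no result in the Lapid--M\'{i}nguez papers giving a combinatorial description of \emph{all} constituents of $\langle L\rangle\times\sigma$ in terms of tracked elementary moves; what is known is only that constituents $\langle\mathfrak m'\rangle$ satisfy $\mathfrak m'\ge L+\mathfrak m$ in the Zelevinski order, which does not control width. So as written the proposal stops exactly where the content begins.

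The paper's argument for this step is completely different and avoids the combinatorics you are worried about. It introduces an auxiliary invariant $j(\pi)$: the maximum, over all irreducible Jacquet-module components $\pi'$ of $\pi$, of the multiplicity of the minimal element of $\underline{\supp(\pi')}$ in $\supp(\pi')$. One checks $\wid(\pi)\le j(\pi)$ (by producing a suitable Kato module as a Jacquet component from a maximal nested chain of segments). Then for a product $\tau_1\times\cdots\times\tau_k$ of ladders, the geometric lemma together with the Kret--Lapid fact that every Jacquet-module component of a ladder is again a ladder gives $j(\tau_1\times\cdots\times\tau_k)\le k$, since each ladder contributes multiplicity at most $1$ at its minimal supercuspidal. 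Hence any constituent has width $\le k$. This Jacquet-module detour is the missing idea; your multisegment-tracking approach, even if it could be made to work, would require substantially more than you have sketched.
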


In particular, Theorem \ref{main-1} gives for any $\pi_1,\pi_2\in \irr$ a necessary condition for the occurrence of irreducible representations in the composition series of $\pi_1\times \pi_2$.

The problem of finding a general rule in terms of multisegments for the precise composition series appears to be far from reach (as mentioned, even determining irreducibility is known to be a difficult task). Nevertheless, on the first level of our filtration, namely, when $\pi_1,\pi_2$ are ladder representations, this task was shown to be feasible by the works of Tadi\'{c} \cite{tadic-speh}, Leclerc \cite{lecl} and Ram \cite{ram}. These provide such formulas for a product of ladder representations taken from certain subclasses.

Our second result establishes a general multiplicity-one phenomenon previously observed for each of those subclasses.

\begin{theorem}\label{main-2}
For any ladder representations $\pi_1\in \irr(G_{n_1}),\,\pi_2\in \irr(G_{n_2})$, the irreducible factors of $\pi_1\times\pi_2$ all appear with multiplicity one.

In other words, in the group $\mathcal{R}_{n_1+n_2}$, we have $[\pi_1 \times \pi_2] = [\sigma_1]\, + \,\ldots +\, [\sigma_k]$, for $\sigma_1,\ldots,\sigma_k\in \irr(G_{n_1+n_2})$, with $\sigma_i\ncong \sigma_j$ for $i\neq j$.

\end{theorem}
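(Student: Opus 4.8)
The plan is to reduce the statement to a single supercuspidal line, where it becomes combinatorial, and then to prove it by induction on the total size of the two multisegments, using the Bernstein--Zelevinski derivative together with the known behaviour of ladder representations under it. By the factorization of an arbitrary irreducible into a product of irreducibles supported on disjoint supercuspidal lines, together with the irreducibility of that product, it is enough to treat $\pi_1,\pi_2\in\irr_{\langle\rho\rangle}$ for a fixed $\rho$; and since, by Zelevinski's theory \cite{Zel}, the multiplicative structure of $\mathcal{R}_{\langle\rho\rangle}$ does not depend on $\rho$, we may as well take $\rho=\mathbf{1}_{\GL_1}$ and identify $\irr_{\langle\rho\rangle}$ with the set of multisets of intervals in $\Z$, writing $\pi=Z(\mathfrak{m})$. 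Let then $\pi_i=Z(\mathfrak{m}_i)$ be ladders and set $N=|\mathfrak{m}_1|+|\mathfrak{m}_2|$, the total number of boxes. We argue by induction on $N$, the case $N\le 1$ being trivial.

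Let $D\colon\mathcal{R}\to\mathcal{R}$ be the total Bernstein--Zelevinski derivative; it is a ring endomorphism with $D(\sigma\times\tau)=D(\sigma)\times D(\tau)$, and for $\sigma\in\irr$ its highest nonzero component $\sigma^{\mathrm{top}}$ is again irreducible. Two facts drive the induction. First, by the Leibniz rule the highest derivative of $\pi_1\times\pi_2$ is $\pi_1^{\mathrm{top}}\times\pi_2^{\mathrm{top}}$. Second, by the explicit description of the Jacquet modules of ladder representations in \cite{LapidKret}, the highest derivative of a ladder is again a ladder, of strictly smaller size --- this is where the ladder hypothesis enters. Hence $D^{\mathrm{top}}(\pi_1\times\pi_2)$ is a product of two ladders of total size $<N$, and so is multiplicity-free by the induction hypothesis.

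It remains to propagate multiplicity-freeness from $D^{\mathrm{top}}(\pi_1\times\pi_2)$ back to $\pi_1\times\pi_2$ itself. The mechanism is to stratify the composition factors $\sigma$ of $\pi_1\times\pi_2$ by the amount $h(\sigma)$ by which their highest derivative drops the rank, and to peel them off from the top stratum downward. A factor $\sigma$ with $h(\sigma)$ maximal has $\sigma^{\mathrm{top}}$ among the composition factors of $D^{\mathrm{top}}(\pi_1\times\pi_2)$; distinct such $\sigma$ give distinct $\sigma^{\mathrm{top}}$; and $\sigma$ is recovered as the unique irreducible submodule of the induced representation built from $\sigma^{\mathrm{top}}$, so the multiplicity of $\sigma$ in $\pi_1\times\pi_2$ is at most that of $\sigma^{\mathrm{top}}$ in $D^{\mathrm{top}}(\pi_1\times\pi_2)$, i.e.\ at most $1$. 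Subtracting these factors and recursing on the remaining, strictly shorter virtual representation then closes the argument. In practice it is cleaner to run this not with the full derivative but with the derivative $D_{\rho'}$ along the single supercuspidal $\rho'=|\cdot|^{e}$, where $e$ is the largest right endpoint occurring in $\mathfrak{m}_1\cup\mathfrak{m}_2$: this choice forces $\rho'$ to lie on the boundary of at most one segment of each of the two ladders, which is precisely what makes the associated top-derivative map separate the composition factors.

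I expect this last point --- the propagation step --- to be the main obstacle: a priori several composition factors of $\pi_1\times\pi_2$ could collapse to the same highest derivative and thereby hide a multiplicity. Excluding this needs the fine structure of ladder representations, beyond the width filtration of Theorem \ref{main-1}; concretely, the Kret--Lapid description of their Jacquet modules, the $\square$-irreducibility of ladders (i.e.\ $\pi\times\pi$ irreducible for $\pi$ a ladder), and the resulting irreducibility of the socle and cosocle of $\pi_1\times\pi_2$ established by Lapid and M\'{i}nguez \cite{LM2}, which together pin down which irreducibles of a given depth can occur in the relevant induced representations and with what multiplicity, so that the induction on $N$ goes through.
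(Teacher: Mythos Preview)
Your reduction to a single supercuspidal line is correct, and the idea of using derivatives together with the Kret--Lapid description of Jacquet modules of ladders is reasonable. However, the propagation step---which you yourself flag as the obstacle---is a genuine gap, and the remedies you suggest do not close it.

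Concretely: after you handle the composition factors $\sigma$ with $h(\sigma)=h_{\max}$, the element $[\pi_1\times\pi_2]-\sum_{h(\sigma)=h_{\max}}[\sigma]$ is no longer the class of a product of two ladders, so your induction on $N$ cannot be re-applied to the lower strata. Your proposed fix, passing to the $\rho'$-derivative at the maximal endpoint $e$, does not force all composition factors into a single stratum as you suggest. The number of segments of $\sigma$ ending at $e$ is indeed constant (equal to the multiplicity of $e$ in the common support), but $h_{\rho'}(\sigma)$ is \emph{not} that number in general. For example, take $\pi_1=L([0,1])$, $\pi_2=L([1,2])$: the two composition factors are $\sigma_1=L([0,2]+[1,1])$ and $\sigma_2=L([0,1]+[1,2])$, each with one segment ending at $e=2$, yet $h_{\rho'}(\sigma_1)=1$ while $h_{\rho'}(\sigma_2)=0$ (the Kret--Lapid formula shows $\mathbf r_{(3,1)}(\sigma_2)$ has no component with $\rho'$ on the right). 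So the stratification is non-trivial, the top stratum controls only $\sigma_1$, and you are left with no inductive handle on the multiplicity of $\sigma_2$. The $\square$-irreducibility and socle/cosocle results of Lapid--M\'inguez that you invoke give information about the extremal layers of $\pi_1\times\pi_2$, not about multiplicities of interior factors, so they do not fill this gap either.

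The paper's argument bypasses this difficulty by replacing the single-step derivative with a finer invariant. To each $\sigma=L(\mathfrak n)\in\irr_{\langle\rho\rangle}$ it attaches the \emph{indicator representation} $\sigma_\otimes$, obtained by grouping the segments of $\mathfrak n$ according to their begin-points and tensoring the resulting generic pieces. One always has $m(\sigma,\Pi)\le m(\sigma_\otimes,\mathbf r_{\alpha_\sigma}(\Pi))$. The main technical step (Proposition~\ref{mult-one}) shows directly, by induction on $\#\mathfrak n$ rather than on $N$, that $m(\sigma_\otimes,\mathbf r_{\alpha_\sigma}(\pi_1\times\pi_2))\le 1$ for ladders $\pi_1,\pi_2$. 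The induction peels off the block of segments with the minimal begin-point; a short combinatorial lemma (Lemma~\ref{mul-lem}) describes all pairs of ladder Jacquet components whose product can contain that generic block, and shows there is at most one such pair. Because the remaining $\delta^s$ are again products of two ladders, the induction continues without the loss of structure that breaks your argument.
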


The proof of Theorem \ref{main-2} relies on a tool we call the \textit{indicator representation} of a representation in $\irr_{\langle \rho \rangle}$. Namely, for any $\sigma\in \irr_{\langle \rho \rangle}$ we construct a specified generic irreducible representation $\sigma_\otimes$ which always occurs as a subquotient of one of the Jacquet modules of $\sigma$. In particular, from exactness of the Jacquet functor we know that the multiplicity of $\sigma$ in a given finite-length representation $\pi$ is bounded by the multiplicity of the indicator $\sigma_\otimes$ in the suitable Jacquet module of $\pi$.

We show that the latter multiplicity can often be computed with a much lower complexity than that which is involved in Kazhdan-Lusztig polynomials computations. Thus, we produce a potentially effective tool for attacking the general decomposition problem.

Another point of view which we adopt is that of permutation indexing of elements of $\irr_{\langle \rho \rangle}$. The symmetric group $S_n$ will give a sort of coordinate systems for irreducible representations constructed on $n$ segments in the Zelevinski classification. With this in mind, we see (Corollary \ref{cor-patt}) that width $k$ irreducible representations are closely related to those parameterized by permutations which avoid the $(k+1)k\ldots 21$ pattern.

In the terms introduced above we formulate a rule for the decomposition of a general product of two ladder representations.
\begin{conjecture-no}[see Conjecture \ref{main-conj} for the precise form]
Given two ladder representations $\pi_1,\pi_2\in \irr_{\langle \rho \rangle}$, the irreducible composition factors of $\pi_1\times\pi_2$ are those representations $\sigma$ which (among the natural candidates) satisfy both two conditions:
\begin{enumerate}
\item The permutation which describes $\sigma$ avoids a $321$ pattern.
\item The indicator $\sigma_\otimes$ appears in a Jacquet module of $\pi_1\times \pi_2$.
\end{enumerate}
\end{conjecture-no}
Both conditions of the conjecture have the advantage of being easily verified for any given Langlands/Zelevinski data of representations.

One particular family of cases of Conjecture \ref{main-conj} can be interpreted in the sense that the ``upper bounds" on the multiplicities of irreducible subquotients supplied by Theorems \ref{main-1} and \ref{main-2} are tight. More precisely, one can construct families of two ladder representations whose product will conjecturally contain all irreducible candidates of the maximal possible width $2$. In fact, these cases of our proposed rule, which are now stated in Proposition \ref{prop-conj}(2), were initially conjectured by Erez Lapid and are shown by the mentioned proposition to imply the full conjecture.

Moreover, Proposition \ref{prop-conj} also shows that Conjecture \ref{main-conj} is equivalent to a family of identities on values of Kazhdan-Lusztig polynomials. These identities were verified by a computer to hold in all low rank cases (more precisely, in all cases when $\pi_1, \pi_2$ are defined by no more than $11$ segments).

In a followup work of the author \cite{me-quantum}, a full proof of Conjecture \ref{main-conj} is given. The proof relies on the reductions obtained here in Proposition \ref{prop-conj}, but requires additional tools from the domain of quantum groups, which extend beyond the natural scope of this paper.

The last part of this work gives a stand-alone proof of Conjecture \ref{main-conj} for certain special cases.
\begin{theorem}[Corollary \ref{cor-smooth}]
Conjecture \ref{main-conj} holds when the Schubert variety associated to the permutation which describes $\sigma$ is smooth.
\end{theorem}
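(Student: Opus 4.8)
The plan is to pass to the reformulation of Conjecture~\ref{main-conj} in terms of Kazhdan--Lusztig polynomials supplied by Proposition~\ref{prop-conj}, and to use that smoothness of a Schubert variety trivializes the polynomials attached to it. Fix ladder representations $\pi_1,\pi_2$ and a natural candidate $\sigma$ with associated permutation $w_\sigma$. Two of the three implications needed cost nothing and require no smoothness hypothesis. If condition~(1) fails, i.e.\ $w_\sigma$ contains a $321$ pattern, then by Corollary~\ref{cor-patt} the representation $\sigma$ has width at least $3$, so it cannot occur in $\pi_1\times\pi_2$ by Theorem~\ref{main-1}; if condition~(2) fails, i.e.\ $\sigma_\otimes$ does not occur in the relevant Jacquet module of $\pi_1\times\pi_2$, then $\sigma$ does not occur either, by the indicator bound $m(\sigma,\pi_1\times\pi_2)\le m(\sigma_\otimes, r(\pi_1\times\pi_2))$ used in the proof of Theorem~\ref{main-2}. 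So the entire content of the theorem is the remaining \emph{sufficiency}: assuming $X_{w_\sigma}$ is smooth, if (1) and (2) both hold then $m(\sigma,\pi_1\times\pi_2)=1$ --- equivalently, by Theorem~\ref{main-2}, this multiplicity is nonzero.

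To attack sufficiency I would use that, by Proposition~\ref{prop-conj}, the equality $m(\sigma,\pi_1\times\pi_2)=1$ is equivalent to an identity among values $P_{x,y}(1)$ of Kazhdan--Lusztig polynomials for symmetric groups in which the only polynomials that appear are the $P_{x,w_\sigma}$ with $x\le w_\sigma$; informally, the identity sees only the composition multiplicities of the single standard module attached to $\sigma$. Now apply the Lakshmibai--Sandhya criterion: $X_{w_\sigma}$ is smooth if and only if $w_\sigma$ avoids both patterns $3412$ and $4231$, and in that case, by the rational-smoothness theorem of Kazhdan--Lusztig and Deodhar, $P_{x,w_\sigma}(q)=1$ for every $x\le w_\sigma$. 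Substituting $P\equiv 1$, the Kazhdan--Lusztig identity of Proposition~\ref{prop-conj} degenerates into a purely combinatorial identity, with no residual intersection-cohomology input.

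The remaining and hardest step is to prove that combinatorial identity. The model to keep in mind is the standard-basis expansion of a product of two ladders: using the determinantal formula of Tadic for the class of a ladder representation, together with commutativity of $\mathcal{R}$ (which identifies the class of a product of segment representations with a Zelevinski standard module), one obtains an explicit expansion $[\pi_1\times\pi_2]=\sum_{\mathfrak n}\epsilon_{\mathfrak n}[\zeta(\mathfrak n)]$ with computable integer coefficients $\epsilon_{\mathfrak n}$, and under the smoothness hypothesis the multiplicity of $\sigma$ becomes the partial sum $\sum_{\mathfrak n\le\mathfrak m_\sigma}\epsilon_{\mathfrak n}$ over multisegments below $\mathfrak m_\sigma$. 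One wants this partial sum to equal $1$ exactly when (1) and (2) hold; as noted above, the only part not already guaranteed by Theorems~\ref{main-1} and~\ref{main-2} is its nonvanishing when both conditions do hold. The plan for that is to identify the partial sum with the indicator multiplicity $m(\sigma_\otimes, r(\pi_1\times\pi_2))$, which is positive by hypothesis~(2): one applies the appropriate Jacquet functor term by term to the expansion above, uses that $\sigma_\otimes$ is generic (so its multiplicity in each $\zeta(\mathfrak n)$ is governed by a transparent combinatorial rule), and matches the two sums up to a sign-reversing involution on $\{\mathfrak n : \epsilon_{\mathfrak n}\neq 0\}$.

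The main obstacle is precisely constructing and controlling that involution: the set of $\mathfrak n$ lying below $\mathfrak m_\sigma$ and the set of $\mathfrak n$ whose Jacquet module contributes $\sigma_\otimes$ need not agree term by term, and reconciling them requires understanding how the $321$-avoidance of $w_\sigma$ restricts the Bruhat ideal below $\mathfrak m_\sigma$ relative to the determinantal expansion of a product of two ladders. This is the step where the combinatorics of pattern avoidance, the indicator formalism, and the smoothness hypothesis all have to be combined.
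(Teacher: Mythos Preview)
Your two ``free'' implications are correct and match the paper. The sufficiency argument, however, takes a different route from the paper's, and the crucial step you flag as an obstacle is never carried out.

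The paper does \emph{not} use the KL identity of Proposition~\ref{prop-conj} or the fact that $P_{x,w_\sigma}\equiv 1$ under smoothness. Instead it works directly with indicator multiplicities in standard modules. The key lemma is Proposition~\ref{prop-mulone}: if $w$ avoids both $321$ and $3412$, then for \emph{every} $x\in S(\lambda,\mu)$ one has
\[
m\bigl(\sigma_\otimes,\ \mathbf{r}_{\alpha_\sigma}(M(\gotM^x_{\lambda,\mu}))\bigr)\le 1,
\]
proved by induction on $m$ using a short structural fact about $321$-and-$3412$-avoiding permutations (Lemma~\ref{lem-simp-comb}) together with Proposition~\ref{bruhat-prop}. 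Theorem~\ref{thm-smooth} then deduces $m(\sigma,\pi_1\times\pi_2)=m(\sigma_\otimes,\mathbf{r}_{\alpha_\sigma}(\pi_1\times\pi_2))$ by a two-line counting argument: if the indicator appears in some irreducible subquotient $\theta\not\cong\sigma$ of $\pi_1\times\pi_2$, then since $x\le w$ (Proposition~\ref{bruhat-prop}) both $\theta$ and $\sigma$ sit inside $M(\gotM^x_{\lambda,\mu})$, forcing indicator multiplicity $\ge 2$ there, contradicting Proposition~\ref{prop-mulone}.

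Your proposed route through the determinantal expansion and a sign-reversing involution would, if pushed through, require knowing that $m(\sigma_\otimes,\mathbf{r}(\zeta(\mathfrak n)))$ equals the Iverson bracket $[\mathfrak n\le\mathfrak m_\sigma]$; the ``$\le 1$'' half of that is precisely Proposition~\ref{prop-mulone}, which you have not isolated. So the missing ingredient is this uniform bound on indicator multiplicities in standard modules --- once you have it, the KL detour and the involution become unnecessary, and the paper's short counting argument finishes the job.
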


Since we are restricted by the formulation of our conjecture to $321$-pattern avoiding permutations, it is a corollary of the known criterion of \cite{lak-san} that the permutations with a smooth Schubert variety in which we are interested are those also avoiding a $3412$-pattern. In fact, our proof is combinatorial in nature and applies only the pattern avoidance criteria.

\subsection*{Paper structure}
Section \ref{sect-nota} sets up the known generalities about the representation theory of $GL_n(F)$ and the Zelevinski classification of irreducible representations. The tools of Mackey theory for this setting are central for our work.

Section \ref{sect-problem} poses our main problem and covers the basics required for handling it. We also introduce permutation groups and their Kazhdan-Lusztig polynomials into our setting.

Section \ref{sect-wid} deals with our new width invariant, while Section \ref{sect-mulone} contains the proof of the mentioned multiplicity-one theorem.

Section \ref{sect-some-more} proves some properties of indicator representations which will be required in the last section, and might be of stand-alone interest as well.

Section \ref{sect-conj} proposes our conjectural framework for products of two ladder representations, while Section \ref{sect-smooth} proves it for the smooth case.




\subsection*{Acknowledgements}
This research was conducted and written as part of the author's work in the Weizmann Institute of Science.

I would like to thank Erez Lapid for introducing the problem to me and for his continuing encouragement all through this project. I am grateful to Bernard Leclerc for sharing many of his valuable insights. Thanks are also due to Arun Ram who enlightened me about many aspects of the problem, and especially for bringing the work of \cite{groj-vaz} to my attention and suggesting the interpretation of the width invariant in terms of Kato modules.

Special thanks to Greg Warrington for sharing some useful Kazhdan-Lusztig polynomials computer scripts on his website, which were used in the verification of Conjecture \ref{main-conj} for low rank cases.

\section{Notation and preliminaries}\label{sect-nota}

\subsection{Generalities}
For a $p$-adic group $G$, let $\mathfrak{R}(G)$ be the category
of smooth complex representations of $G$ of finite length. Denote by $\irr(G)$ the set of equivalence classes
of irreducible objects in $\mathfrak{R}(G)$. Denote by $\mathcal{C}(G)\subset \irr(G)$ the subset of irreducible supercuspidal representations. Let $\mathcal{R}(G)$ be the Grothendieck group of $\mathfrak{R}(G)$. We write $\pi\mapsto [\pi]$ for the canonical map $\mathfrak{R}(G) \to \mathcal{R}(G)$.

Given $\pi\in \mathfrak{R}(G)$, we have $[\pi] = \sum_{\sigma\in \irr(G)} c_\sigma\cdot [\sigma]$. For every $\sigma\in \irr(G)$, let us denote the multiplicity $m(\sigma, \pi):= c_{\sigma}\geq0$. For convenience we will sometimes write $m(\sigma, \pi)=0$ for representations $\pi,\sigma$ of two distinct groups.

Now let $F$ be a fixed $p$-adic field. We write $G_n = GL_n(F)$, for all $n\geq1$, and $G_0$ for the trivial group.

For a given $n$, let $\alpha = (n_1, \ldots, n_r)$ be a composition of $n$. We denote by $M_\alpha$ the subgroup of $G_n$ isomorphic to $G_{n_1} \times \cdots \times G_{n_r}$ consisting of matrices which are diagonal by blocks of size $n_1, \ldots, n_r$ and by $P_\alpha$ the subgroup of $G_n$ generated by $M_\alpha$ and the upper
unitriangular matrices. A standard parabolic subgroup of $G_n$ is a subgroup of the form $P_\alpha$ and its standard Levi factor is $M_\alpha$. We write $\mathbf{r}_\alpha: \mathfrak{R}(G_n)\to \mathfrak{R}(M_\alpha)$ and $\mathbf{i}_\alpha: \mathfrak{R}(M_\alpha)\to \mathfrak{R}(G_n)$ for the normalized Jacquet functor and the parabolic induction functor associated to $P_\alpha$.

Note that naturally $\mathcal{R}(M_\alpha)\cong \mathcal{R}(G_{n_1})\otimes \cdots\otimes \mathcal{R}(G_{n_r})$ and $\irr(M_\alpha)=\irr(G_{n_1})\times\cdots\times \irr(G_{n_r})$.

For $\pi_i\in \mathfrak{R}(G_{n_i})$, $i=1,\ldots,r$, we write
\[
\pi_1\times\cdots\times \pi_r = \mathbf{i}_{(n_1,\ldots,n_r)}(\pi_1\otimes\cdots\otimes \pi_r)\in \mathfrak{R}(G_{n_1+\ldots+n_r}).
\]
The image of a Jacquet functor applied on a representation will often be referred to as a Jacquet module of the representation.

Let us write $\mathcal{R} = \oplus_{m \geq 0} \mathcal{R}(G_m)$. The above product operation $\times$ defines a commutative ring structure on the group $\mathcal{R}$, where the trivial one-dimensional representation of $G_0$ is treated as an identity element.

We also write $\irr = \cup_{m\geq0} \irr(G_m)$ and $\mathcal{C} = \cup_{m\geq1} \mathcal{C}(G_m)$ for the subset of supercuspidal representations.

Given a set $X$, we write $\mathbb{N}(X)$ for the commutative semigroup of maps from $X$ to $\mathbb{N}= \mathbb{Z}_{\geq0}$ with finite support. For $A\in\mathbb{N}(X)$, we write
\[
\underline{A} = \{x\in X \,:\, A(x)>0\}\subset X
\]
for the support of $A$.
Given a finite set $S\subset X$, we write $\mathbbm{1}_S\in \mathbb{N}(X)$ for the indicator function of $S$. This gives an embedding $X\to \mathbb{N}(X)$ by $x \mapsto \mathbbm{1}_x$. We will sometimes simply refer to $X$ as a subset of $\mathbb{N}(X)$ by implicitly using this embedding.

For $A\in \mathbb{N}(X)$, we write $\# A= \sum_{x\in X} A(x)$ for the size of $A$.

For $A,B\in \mathbb{N}(X)$ we say that $A\leq B$ if $B-A\in \mathbb{N}(X)$.

\subsection{Langlands classification}
Let us describe the Langlands classification of $\irr$ in terms convenient for our needs.

For any $n$, let $\nu^s= |\det|^s_F,\;s\in \mathbb{C}$ denote the family of one-dimensional representations of $G_n$, where $|\cdot|_F$ is the absolute value of $F$. For $\pi\in \mathfrak{R}(G_n)$, we write $\pi\nu^s := \pi\otimes \nu^s\in \mathfrak{R}(G_n)$.

Given $\rho\in \mathcal{C}(G_n)$ and two integers $a\leq b$, we write $L([a,b]_\rho)\in \irr(G_{n(b-a+1)})$ for the unique irreducible quotient of $\rho\nu^{a}\times \rho\nu^{a+1}\times\cdots\times \rho\nu^b$. It will also be helpful to set $L([a,a-1]_\rho)$ as the trivial representation of $G_0$.

The collection of representations of the form $L([a,b]_\rho)$ is known to exhaust the class of essentially (that is, up to a twist by $\nu^s$) square-integrable irreducible representations.

We also treat the \textit{segment} $\Delta= [a,b]_\rho$ as a formal object defined by the triple $([\rho],a,b)$. We denote by $\seg$ the collection of all segments that are defined by $\rho\in \mathcal{C}$ and integers $a-1\leq b$, up to the equivalence $[a,b]_\rho=[a',b']_{\rho'}$, when $\rho\nu^a\cong \rho'\nu^{a'}$ and $\rho\nu^b \cong \rho'\nu^{b'}$.

A segment $\Delta_1$ is said to precede a segment $\Delta_2$, if $\Delta_1 = [a_1,b_1]_{\rho} ,\;\Delta_2= [a_2,b_2]_{\rho}$ and $a_1\leq a_2-1\leq b_1<b_2$. We will write $\Delta_1 \prec \Delta_2$ in this case and say that the pair $\{\Delta_1,\Delta_2\}$ is linked.

We will write $[a_1,b_1]_{\rho}\subseteq[a_2,b_2]_{\rho}$ when $a_2\leq a_1$ and $b_1\leq b_2$.

The elements of $\mathbb{N}(\seg)$ are called \textit{multisegments}. Langlands classification gives a bijection
\[
L: \mathbb{N}(\seg) \to \irr
\]
that extends the definition of $L$ for a single segment described above.

Given a non-zero multisegment $\gotM$, it is possible to write it as a finite sum of segments in $\seg$. More precisely, we can choose a numbering $\gotM = \Delta_1+\ldots+\Delta_k$, where $\Delta_1,\ldots, \Delta_k\in \seg$, so that $\Delta_j \nprec \Delta_i$ for all $i<j$. We then define the \textit{standard module} $M(\gotM)$ and the \textit{co-standard module} $\widehat{M}(\gotM)$ associated with $\gotM$ to be the representations
\[
M(\gotM) = L(\Delta_k)\times\cdots\times L(\Delta_1),\quad \widehat{M}(\gotM) = L(\Delta_1)\times\cdots\times L(\Delta_k)\;.
\]
The isomorphism classes of $M(\gotM)$ and of $\widehat{M}(\gotM)$ do not depend on the enumeration of segments, as long as the condition above is satisfied. The representation $M(\gotM)$ (respectively, $\widehat{M}(\gotM)$) has a unique irreducible quotient (respectively, sub-representation), which is isomorphic to $L(\gotM)$. We refer to \cite{LM2} for a more thorough discussion of the classification with a similar terminology.

Note, that since $\mathcal{R}$ is a commutative ring, we have $[M(\gotM)] = [\widehat{M}(\gotM)] = [L(\Delta_1)]\times\cdots\times [L(\Delta_k)]$.

When none of the pairs of segments in $\gotM$ are linked the representation $L(\gotM)$ is called \textit{generic}. In that case $M(\gotM)\cong\widehat{M}(\gotM)\cong L(\gotM)$.

\begin{remark}\label{rmk}
We always have $m(L(\gotM_1+\gotM_2), L(\gotM_1)\times L(\gotM_2))=1$, for $\gotM_1,\gotM_2\in \mathbb{N}(\seg)$.
\end{remark}

\subsection{Supercuspidal lines}
For every $\pi\in \irr$ there exist $\rho_1,\ldots,\rho_r \in\mathcal{C}$ for which $\pi$ is a sub-representation of $\rho_1\times\cdots\times \rho_r$. The notion of supercuspidal support can then be defined as
\[
\supp(\pi):= \rho_1+\ldots+\rho_r\in \mathbb{N}(\mathcal{C})\;.
\]
Note that supercuspidal supports can be easily read from the Langlands classification. Namely, for all $\gotM\in \mathbb{N}(\seg)$ and all $\rho\in \mathcal{C}$, $\supp(L(\gotM))(\rho) =
\sum_{\Delta\in \seg\,:\, \rho\in \underline{\supp (\Delta)} }\gotM(\Delta)$, while for a single segment $\Delta=[a,b]_\rho$ we have $\supp(\Delta) = \rho\nu^a + \rho\nu^{a+1}+\cdots+\rho\nu^b$.

Given $\rho\in \mathcal{C}$, we call
\[
\mathbb{Z}_{\langle \rho \rangle}:=\{\rho\nu^a\,:\;a\in \mathbb{Z}\}\subset \mathcal{C}
\]
the \textit{line} of $\rho$.

We write $\irr_{\langle \rho \rangle}\subset \irr$ for the collection of irreducible representations whose supercuspidal support is supported on $\mathbb{Z}_{\langle \rho \rangle}$ (i.e. $\pi\in \irr$ with $\supp(\pi)\in \mathbb{N}(\mathbb{Z}_{\langle \rho \rangle})$). We also write $\seg_{\langle \rho \rangle} = \{[a,b]_\rho\in \seg\,:\;a-1\leq b\}$ and $\mathcal{R}_{\langle \rho \rangle}$ for the ring generated by $\irr_{\langle \rho \rangle}$ in $\mathcal{R}$. It is then straightforward that the restriction of $L$ gives a bijection $\mathbb{N}(\seg_{\langle \rho \rangle})\to \irr_{\langle \rho \rangle}$.

\subsection{Geometric lemma/ Mackey theory/ Shuffle lemma}
The Geometric Lemma of Bernstein-Zelevinski (\cite{BZ1}) is a valuable tool for an analysis of Jacquet functors. It allows for an expression of the Jacquet module of an induced representation as a product of Jacquet modules of the inducing data. This is a natural variant of the Mackey theory (dealing with restriction and induction functors) in the $p$-adic groups setting.

We will review here only a part of the ``semi-simplified" version, i.e. statements on $\mathcal{R}$, of the lemma. For the full ``semi-simplified" statement we refer the reader to \cite[Section 1.2]{LM2}.

Let $\pi_i\in \mathfrak{R}(G_{n_i})$ be given representations, for $i=1,\ldots r$. We can write the decompositions of the Jacquet functor relative to maximal parabolic subgroups as
\[
\sum_{k=0}^{n_i}[\mathbf{r}_{(k,n_i-k)}(\pi_i)] = \sum_{j\in J_i} [\tau_i^j\otimes \delta_i^j]\;,
\]
for all $i$, with $\tau_i^j,\delta_i^j\in\irr$. Let us denote the index set $\mathcal{J}(\pi_1,\ldots,\pi_r) = J_1\times\cdots\times J_r$, and for each $s=(j_1,\ldots, j_r)\in \mathcal{J}(\pi_1,\ldots,\pi_r)$ we write the representations
\[
\tau^s:= \tau_1^{j_1}\times \cdots\times \tau_r^{j_r},\qquad \delta^s:=\delta_1^{j_1}\times\cdots\times \delta_r^{j_r}.
\]
Then, the maximal parabolic subgroups Jacquet modules of the product decompose as follows.
\begin{proposition}[Geometric Lemma]\label{prop-geom}
\[
\sum_{k=0}^n[\mathbf{r}_{(k,n-k)}(\pi_1\times \cdots\times \pi_r)] = \sum_{s\in \mathcal{J}(\pi_1,\ldots,\pi_r)} [\tau^s]\otimes [\delta^s]\;.
\]
\end{proposition}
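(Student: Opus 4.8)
The plan is to obtain this as a special case of the full ``semi-simplified'' Bernstein--Zelevinski geometric lemma for a composite $\mathbf{r}_\beta\circ\mathbf{i}_\alpha$, specialized to $\alpha=(n_1,\ldots,n_r)$ and $\beta=(k,n-k)$, and then to reorganize the resulting double sum and sum over $k$. So the first step is to quote, for the $M_\alpha$-representation $\sigma=\pi_1\otimes\cdots\otimes\pi_r$, the identity in $\mathcal{R}$
\[
[\mathbf{r}_{(k,n-k)}\,\mathbf{i}_\alpha(\sigma)]=\sum_{w}\bigl[\,\mathbf{i}^{M_{(k,n-k)}}\circ \operatorname{Ad}(w)\circ \mathbf{r}^{M_\alpha}(\sigma)\,\bigr],
\]
where $w$ runs over the minimal-length representatives of $W_{(k,n-k)}\backslash S_n/W_\alpha$ and the inner induction/restriction data are the ones prescribed by $w$. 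For $GL_n$ no summand is lost to a relevance condition, since $w^{-1}M_\alpha w\cap M_{(k,n-k)}$ is always a standard Levi of $M_{(k,n-k)}$; this ``all double cosets genuinely contribute'' point is the one place I would be careful, as everything afterwards is elementary bookkeeping. Concretely these double cosets are in bijection with the $2\times r$ nonnegative integer matrices with column sums $(n_1,\ldots,n_r)$ and row sums $(k,n-k)$, i.e.\ with tuples $(a_1,\ldots,a_r)$ satisfying $0\le a_i\le n_i$ and $\sum_i a_i=k$; and the summand attached to $(a_1,\ldots,a_r)$ equals, in $\mathcal{R}$,
\[
\sum\;\bigl(\tau_1\times\cdots\times\tau_r\bigr)\otimes\bigl(\delta_1\times\cdots\times\delta_r\bigr),
\]
the sum ranging over the terms $\tau_i\otimes\delta_i$ appearing in $[\mathbf{r}_{(a_i,n_i-a_i)}(\pi_i)]$.

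Next I would translate this into the notation of the statement. Each index $j\in J_i$ of the decomposition $\sum_{k=0}^{n_i}[\mathbf{r}_{(k,n_i-k)}(\pi_i)]=\sum_{j\in J_i}[\tau_i^j\otimes\delta_i^j]$ comes with a well-defined integer $|\tau_i^j|\in\{0,\ldots,n_i\}$ (the group on which $\tau_i^j$ lives), and the terms occurring in $[\mathbf{r}_{(a_i,n_i-a_i)}(\pi_i)]$ are exactly the $\tau_i^j\otimes\delta_i^j$ with $|\tau_i^j|=a_i$. Substituting, the summand attached to $(a_1,\ldots,a_r)$ becomes $\sum[\tau^s]\otimes[\delta^s]$ over those $s=(j_1,\ldots,j_r)\in \mathcal{J}(\pi_1,\ldots,\pi_r)=J_1\times\cdots\times J_r$ with $|\tau_i^{j_i}|=a_i$ for every $i$. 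Summing over $k$ and, for each $k$, over all $(a_1,\ldots,a_r)$ with $\sum_i a_i=k$ then amounts to summing over $\prod_i\{0,\ldots,n_i\}$; since a tuple $s\in\mathcal{J}$ determines $a_i:=|\tau_i^{j_i}|$ and hence $k:=\sum_i|\tau_i^{j_i}|$ uniquely, each $s$ is counted exactly once in the total. This yields $\sum_{k=0}^n[\mathbf{r}_{(k,n-k)}(\pi_1\times\cdots\times\pi_r)]=\sum_{s\in\mathcal{J}(\pi_1,\ldots,\pi_r)}[\tau^s]\otimes[\delta^s]$, which is the assertion.

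As an alternative requiring only the simplest instance of the geometric lemma ($r=2$, $\alpha=(n_1,n_2)$), one can induct on $r$. Writing $\pi_1\times\cdots\times\pi_r=(\pi_1\times\cdots\times\pi_{r-1})\times\pi_r$, the $r=2$ case expresses $\sum_k[\mathbf{r}_{(k,n-k)}(\,\cdot\,)]$ as the image of the pair $\bigl(\sum_l[\mathbf{r}_{(l,m-l)}(\pi_1\times\cdots\times\pi_{r-1})],\ \sum_c[\mathbf{r}_{(c,n_r-c)}(\pi_r)]\bigr)$ under the bilinear pairing $(\tau\otimes\delta,\ \tau'\otimes\delta')\mapsto(\tau\times\tau')\otimes(\delta\times\delta')$; feeding in the inductive description of the first entry and using associativity of $\times$ gives the $r$-fold formula. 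Either way, the substantive input is the cited geometric lemma and the only real care needed is that no double-coset summand vanishes in the $GL_n$ setting; the remainder is the re-indexing just sketched, built on the identity $\bigsqcup_{k}\{(a_1,\ldots,a_r):\sum_i a_i=k,\ 0\le a_i\le n_i\}=\prod_i\{0,\ldots,n_i\}$.
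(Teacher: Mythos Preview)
Your derivation is correct. Note, however, that the paper does not give a proof of this proposition at all: it is stated as a quotation of the Bernstein--Zelevinski Geometric Lemma (with a pointer to \cite{BZ1} and to \cite[Section~1.2]{LM2} for the full semi-simplified version), so there is no ``paper's own proof'' to compare against. What you have written is precisely the standard way one extracts this particular identity from the full geometric lemma---parametrizing $W_{(k,n-k)}\backslash S_n/W_\alpha$ by $2\times r$ matrices with prescribed margins, unwinding each summand as a product of Jacquet pieces, and then re-indexing by $\mathcal{J}(\pi_1,\ldots,\pi_r)$---so your approach is exactly in line with what the paper is implicitly invoking.
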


Mainly in the setting of affine Hecke algebras, the analogue of the proposition above is sometimes referred to as the Shuffle Lemma because of its possible shuffle algebra interpretation. See \cite[Lemma 2.7]{groj-vaz}.

In addition, we will need a certain observation from a slightly more general version of the same lemma.

\begin{definition}
We say that a representation $\kappa\in \irr(G_m)$ is a \textit{Jacquet module component} of $\pi\in \mathfrak{R}(G_n)$ if there is a Levi subgroup $M_\alpha< G_n$ and a representation $\sigma=\sigma_1\otimes\cdots\otimes \sigma_r\in \irr(M_\alpha)$, such that $\kappa\cong \sigma_i$ for some $i$ and $m(\sigma,\mathbf{r}_\alpha(\pi))>0$.

If $i=1$, we will say $\kappa$ is a \textit{leftmost} Jacquet module component.
\end{definition}

\begin{proposition}\label{prop-add}
The collection of Jacquet module components of $\pi_1\times\cdots\times \pi_r\in \mathfrak{R}(G_n)$ is precisely the collection of representations $\sigma\in \irr$ for which $m(\sigma, \kappa_1\times\cdots\times \kappa_k)>0$ holds, for some choice of Jacquet module components $\kappa_i$ of $\pi_i$, for $i=1,\ldots,r$.
\end{proposition}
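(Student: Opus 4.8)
The plan is to prove both inclusions of the claimed equality by iterating the maximal-parabolic Geometric Lemma (Proposition~\ref{prop-geom}) together with transitivity of the Jacquet functor. First I would reduce to the case $r=2$: since parabolic induction is associative, $\pi_1\times\cdots\times\pi_r = (\pi_1\times\cdots\times\pi_{r-1})\times\pi_r$, and the collections of ``Jacquet module components of $\kappa_1\times\cdots\times\kappa_k$ over all choices of Jacquet module components $\kappa_i$ of $\pi_i$'' compose associatively as well (a Jacquet module component of $\kappa_1\times\cdots\times\kappa_{r-1}$ is, by the $r-1$ case, a Jacquet module component of $\tau_1\times\cdots\times\tau_m$ for Jacquet module components $\tau_j$ of the $\pi_i$ with $i<r$). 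So it suffices to show: $\kappa\in\irr(G_m)$ is a Jacquet module component of $\pi_1\times\pi_2$ if and only if $m(\kappa,\lambda_1\times\cdots\times\lambda_\ell)>0$ for some Jacquet module components $\lambda_t$ of $\pi_1$ or $\pi_2$.

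For the ``easy'' direction ($\supseteq$): suppose $m(\kappa,\lambda_1\times\cdots\times\lambda_\ell)>0$ where each $\lambda_t$ is a Jacquet module component of $\pi_{i(t)}$, $i(t)\in\{1,2\}$. Grouping the $\lambda_t$ coming from $\pi_1$ and those from $\pi_2$ and reordering (legitimate on the level of $\mathcal R$), one sees that $\kappa$ appears in $\mu_1\times\mu_2$ where $\mu_i$ is a tensor factor of some Jacquet module $\mathbf r_{\beta_i}(\pi_i)$ — indeed a collection of Jacquet module components of $\pi_i$ sitting in adjacent tensor slots of a single Jacquet module, which by transitivity $\mathbf r_{\beta}=\mathbf r_{\beta/\gamma}\circ\mathbf r_\gamma$ is again (a tensor slot inside) a Jacquet module of $\pi_i$. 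Then $\mu_1\otimes\mu_2$ occurs in a Jacquet module of $\pi_1\times\pi_2$ by the (unsemisimplified) Geometric Lemma in the trivial ``no shuffle'' stratum, and $\kappa$, occurring in $\mathbf i(\mu_1\otimes\mu_2)$, occurs after a further Jacquet functor in a Jacquet module of $\pi_1\times\pi_2$ — hence is a Jacquet module component of it.

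For the ``hard'' direction ($\subseteq$): suppose $\kappa\cong\sigma_{i_0}$ with $m(\sigma_1\otimes\cdots\otimes\sigma_r,\mathbf r_\alpha(\pi_1\times\pi_2))>0$ for some composition $\alpha$. By transitivity of Jacquet functors, refining $\alpha$ at the first $i_0-1$ blocks into two blocks and then further, it suffices to treat $\mathbf r_{(a,m,b)}(\pi_1\times\pi_2)$ and locate $\kappa$ in the middle slot; by induction and associativity this reduces to the case of a maximal parabolic, i.e. to analyzing $\mathbf r_{(k,n-k)}(\pi_1\times\pi_2)$, which is governed by Proposition~\ref{prop-geom}. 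There, every irreducible constituent of $\mathbf r_{(k,n-k)}(\pi_1\times\pi_2)$ occurs in some $\tau^s\otimes\delta^s$ with $\tau^s=\tau_1^{j_1}\times\tau_2^{j_2}$ and $\tau_i^{j_i},\delta_i^{j_i}$ irreducible summands of maximal-parabolic Jacquet modules $\mathbf r_{(k_i,n_i-k_i)}(\pi_i)$ — i.e. Jacquet module components of $\pi_i$. Feeding this structural description into the inductive middle-slot argument shows that the slot containing $\kappa$ is itself built (via $\times$ and further Jacquet functors, each of which only produces further Jacquet module components by transitivity) out of Jacquet module components of $\pi_1$ and $\pi_2$, so $\kappa$ occurs in a product $\lambda_1\times\cdots\times\lambda_\ell$ of such.

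The main obstacle is the bookkeeping in the hard direction: one must carefully track, through the iterated Geometric Lemma and through the refinement of a general composition $\alpha$ into maximal steps, that the middle Levi slot containing $\kappa$ really decomposes as a product of tensor slots that are each honest Jacquet module components of $\pi_1$ or $\pi_2$ (as opposed to something only appearing ``virtually'' in $\mathcal R$ with signs). This is handled by working with the non-semisimplified Geometric Lemma, where each stratum genuinely is $\mathbf i(\text{(Jacquet of }\pi_1)\otimes(\text{Jacquet of }\pi_2))$ after the appropriate twist, so that all multiplicities are honestly nonnegative and transitivity of $\mathbf r$ applies cleanly. Once the reduction to maximal parabolics is in place, the statement is essentially a restatement of Proposition~\ref{prop-geom} packaged with transitivity, so I would keep this part brief.
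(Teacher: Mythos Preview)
The paper gives no proof of this proposition: it is introduced as ``a certain observation from a slightly more general version'' of the Geometric Lemma, i.e.\ the version for arbitrary standard parabolics rather than just maximal ones, and is then left as stated. So there is no argument in the paper to compare against beyond that one-line attribution.

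Your plan is correct. Iterating the maximal-parabolic Geometric Lemma together with transitivity of $\mathbf r$ is precisely how one recovers the general-parabolic statement, and that is all that is needed here. Two comments on the execution:

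\begin{itemize}
\item The displayed product $\kappa_1\times\cdots\times\kappa_k$ in the statement is a typo for $\kappa_1\times\cdots\times\kappa_r$: there is exactly one $\kappa_i$ per $\pi_i$. Your reformulation with a variable number $\ell$ of factors $\lambda_t$, each coming from some $\pi_{i(t)}$, is a different (and unnecessarily stronger-looking) assertion, and it forces the extra ``grouping and reordering'' step in the easy direction. If you keep one $\kappa_i$ per $\pi_i$, that step disappears.
\item Your reduction, via transitivity, to a three-block Jacquet module $\mathbf r_{(a,m,b)}$ with $\sigma$ in the middle slot is the right move. After that, one pass of the general Geometric Lemma (equivalently, two nested applications of Proposition~\ref{prop-geom} as you outline) shows that every contribution to the middle slot is of the form $\kappa_1\times\cdots\times\kappa_r$ with each $\kappa_i$ itself a middle-slot component of some $\mathbf r_{(a_i,m_i,b_i)}(\pi_i)$, and conversely every such product occurs as a stratum. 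This already gives both inclusions with exactly $r$ factors, so neither the reduction to $r=2$ nor the worry about an uncontrolled number of $\lambda_t$ is needed.
\end{itemize}

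Your closing concern about ``virtual'' contributions with signs is not an issue: the filtration in the unsemisimplified Geometric Lemma has genuine subquotients and all multiplicities are non-negative, exactly as you say. In short, the argument is sound; the paper's route (invoke the general Geometric Lemma outright) is simply the compressed version of what you wrote.
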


\section{Main Problem and Tools}\label{sect-problem}
Given representations $\pi^1,\pi^2,\sigma\in \irr$, our goal is to determine the multiplicities
\[
m(\sigma, \pi^1\times\pi^2)\;.
\]

We start with the following simple reduction. If the lines of $\rho_1,\ldots , \rho_k\in\mathcal{C}$ are distinct and $\pi_i \in \irr_{\langle \rho_i \rangle}$, then $\pi_1 \times \cdots \times \pi_k$ is known to be irreducible. Moreover, given $\pi = L(\gotM)\in \irr$, we can uniquely write $\gotM = \sum_{i=1}^k \gotM_i$ with $\gotM_i \in \seg_{\langle \rho_i \rangle}$, such that the lines of  $\rho_1,\ldots,\rho_k\in \mathcal{C}$ are distinct. We thus get a unique decomposition $\pi= \pi_1 \times\cdots\times \pi_k$, with $\pi_i \in \irr_{\langle \rho_i \rangle}$.

It follows that questions of decomposition of induced representations in $\mathcal{R}$ can be solved by analyzing $\mathcal{R}_{\langle \rho \rangle}$, for a single $\rho\in \mathcal{C}$. More precisely, from the above discussion we clearly have
\[
m(\sigma_1\times \cdots\times \sigma_k, (\pi^1_1\times \cdots\times \pi^1_k) \times (\pi^2_1\times \cdots\times \pi^2_k))
= \prod_{i=1}^k m(\sigma_i, \pi^1_i\times\pi^2_i)\;,
\]
where $\pi^1_i,\pi^2_i,\sigma_i\in \irr_{\langle \rho_i \rangle}$ are such that the lines of  $\rho_1,\ldots,\rho_k\in \mathcal{C}$ are distinct.

Out of these considerations, \textit{we choose to fix a single supercuspidal representation $\rho\in \mathcal{C}$ for the remainder of the paper}.

We will naturally identify $\mathbb{Z}_{\langle \rho \rangle}$ with $\mathbb{Z}$. For $\pi\in \irr_{\langle \rho \rangle}$ we will then refer to $\supp(\pi)$ as an element of $\mathbb{N}(\mathbb{Z})$. A non-trivial $\Delta\in \seg_{\langle \rho \rangle}$ can be uniquely written as $\Delta = [a,b]_\rho$. Therefore, we will simply write $\Delta=[a,b]$. We then write $b(\Delta)=a$ and $e(\Delta)=b$.

\subsection{The indicator representation}

Given $\gotM\in \mathbb{N}(\seg_{\langle \rho \rangle})$ with $\pi=L(\gotM)\in \irr(G_n)$, let us denote the collection of integers
\[
B_\gotM = \{b(\Delta)\::\:\Delta\in \underline{\gotM}\}\subset \mathbb{Z}_{\langle \rho \rangle}\cong \mathbb{Z}\;.
\]
We write $B_\gotM = \{b_1,\ldots, b_k\}$ with $b_1<\ldots< b_k$. For each $1\leq i\leq k$, write $S_j = \{\Delta\in \seg_{\langle \rho \rangle}\::\: b(\Delta)=b_j\}$. With these notations we define the following.

\begin{definition}
For $\pi\in \irr(G_n)\cap \irr_{\langle \rho \rangle}$, \textit{the indicator representation} of $\pi$ is
\[
\pi_\otimes = L\left(\gotM\cdot\mathbbm{1}_{S_1}\right)\otimes \cdots\otimes L\left(\gotM\cdot\mathbbm{1}_{S_k}\right)\in \irr(M_{\alpha_\pi})\;,
\]
where $M_{\alpha_\pi}<G_n$ is the corresponding Levi subgroup.
\end{definition}

Since $L(\gotM\cdot\mathbbm{1}_{S_j})$ are generic representations, it is clear that $\widehat{M}(\gotM)\cong \mathbf{i}_{\alpha_\pi}(\pi_\otimes)$. Since $\pi$ is embedded in $\widehat{M}(\gotM)$, by adjunction we always find $\pi_\otimes$ as a quotient of $\mathbf{r}_{\alpha_\pi}(\pi)$. By exactness of the functor $\mathbf{r}_{\alpha_\pi}$, we see that for all $\sigma\in \mathfrak{R}(G_n)$,
\begin{equation}\label{maj}
m(\pi,\sigma) \leq m(\pi_\otimes, \mathbf{r}_{\alpha_\pi}(\sigma)).
\end{equation}

This inequality will play a major role in our arguments, since, as will be shown in some cases, the computation of its righthand side is susceptible to combinatorial methods.

In particular, the following mechanism will allow us to use inductive arguments on multiplicities of $\pi_\otimes$ in Jacquet modules.

Let us write $\pi'= L(\gotM -\gotM\cdot\mathbbm{1}_{S_1})\in \irr(G_m)$. Then, $\pi_\otimes = L(\gotM\cdot\mathbbm{1}_{S_1})\otimes \pi'_\otimes\in \irr(M_{\alpha_\sigma})$.

Note that $\mathbf{r}_{\alpha_\pi} = (1\otimes \mathbf{r}_{\alpha_{\pi'}})\circ\mathbf{r}_{(n-m,m)}$. Thus, from Proposition \ref{prop-geom} we deduce that
\begin{proposition}\label{prop-count-mul}
\begin{equation}
m(\pi_\otimes,\,\mathbf{r}_{\alpha_\pi} (\pi_1\times\cdots\times \pi_r)) =
 \sum_{s\in \mathcal{J}(\pi_1,\ldots,\pi_r)} m(L(\gotM\cdot\mathbbm{1}_{S_1}),\,\tau^s)\;\cdot\; m(\pi'_\otimes,\,\mathbf{r}_{\alpha_{\pi'}} (\delta^s))\;,
\end{equation}
\end{proposition}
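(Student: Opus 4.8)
The plan is to unwind the definitions and combine the transitivity formula $\mathbf{r}_{\alpha_\pi} = (1\otimes\mathbf{r}_{\alpha_{\pi'}})\circ\mathbf{r}_{(n-m,m)}$ recorded just above the statement with the Geometric Lemma (Proposition \ref{prop-geom}), computing the multiplicity of $\pi_\otimes$ one tensor factor at a time in the Grothendieck ring. Everything is formal once the correct graded piece of the Geometric Lemma is isolated; no deep input beyond Proposition \ref{prop-geom} and exactness of Jacquet functors is needed.

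First I would set up the bookkeeping. Write $L(\gotM\cdot\mathbbm{1}_{S_1})\in\irr(G_{n-m})$, so that $\alpha_\pi$ refines $(n-m,m)$ and $M_{\alpha_\pi}$ sits inside $M_{(n-m,m)}\cong G_{n-m}\times G_m$ with second factor having standard Levi $M_{\alpha_{\pi'}}$. Under the identifications $\mathcal{R}(M_{\alpha_\pi})\cong\mathcal{R}(G_{n-m})\otimes\mathcal{R}(M_{\alpha_{\pi'}})$ and $\irr(M_{\alpha_\pi})=\irr(G_{n-m})\times\irr(M_{\alpha_{\pi'}})$, the indicator factors as $\pi_\otimes = L(\gotM\cdot\mathbbm{1}_{S_1})\otimes\pi'_\otimes$, which is exactly the observation quoted before the statement. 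Since $\mathbf{r}_{(n-m,m)}$ and $\mathbf{r}_{\alpha_{\pi'}}$ are exact, all computations below may be done with classes in these Grothendieck rings.

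Next I would apply the Geometric Lemma. Proposition \ref{prop-geom} computes $\sum_{k}[\mathbf{r}_{(k,n-k)}(\pi_1\times\cdots\times\pi_r)] = \sum_{s\in\mathcal{J}(\pi_1,\ldots,\pi_r)}[\tau^s]\otimes[\delta^s]$ as a single identity; extracting the summand whose first factor has size $n-m$ gives
\[
[\mathbf{r}_{(n-m,m)}(\pi_1\times\cdots\times\pi_r)] = \sum_{s\,:\,\#\tau^s=n-m}[\tau^s]\otimes[\delta^s]\;.
\]
Applying the exact functor $1\otimes\mathbf{r}_{\alpha_{\pi'}}$, and using $\mathbf{r}_{\alpha_\pi} = (1\otimes\mathbf{r}_{\alpha_{\pi'}})\circ\mathbf{r}_{(n-m,m)}$, yields
\[
[\mathbf{r}_{\alpha_\pi}(\pi_1\times\cdots\times\pi_r)] = \sum_{s\,:\,\#\tau^s=n-m}[\tau^s]\otimes[\mathbf{r}_{\alpha_{\pi'}}(\delta^s)]\;.
\]
Finally, because the classes of $\irr(M_{\alpha_\pi})$ form a $\mathbb{Z}$-basis of $\mathcal{R}(M_{\alpha_\pi})$ which is the "product" of the bases $\irr(G_{n-m})$ and $\irr(M_{\alpha_{\pi'}})$, the multiplicity of the irreducible $L(\gotM\cdot\mathbbm{1}_{S_1})\otimes\pi'_\otimes$ in a pure tensor $[\tau^s]\otimes[\mathbf{r}_{\alpha_{\pi'}}(\delta^s)]$ equals $m(L(\gotM\cdot\mathbbm{1}_{S_1}),\tau^s)\cdot m(\pi'_\otimes,\mathbf{r}_{\alpha_{\pi'}}(\delta^s))$; summing over $s$ gives the claimed formula, and the constraint $\#\tau^s=n-m$ may be dropped since by the paper's convention $m(L(\gotM\cdot\mathbbm{1}_{S_1}),\tau^s)=0$ whenever $\tau^s$ lives on a group other than $G_{n-m}$.

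I expect the only genuinely delicate point to be the middle step: the Geometric Lemma is phrased as an equality of the full sums over all maximal parabolic Jacquet modules, so one must be careful to single out the graded component $\#\tau^s = n-m$ before composing with the transitivity identity. Since the target statement is already written as a sum over all of $\mathcal{J}(\pi_1,\ldots,\pi_r)$ with the vanishing convention in force, this subtlety is absorbed automatically, and the remainder of the argument is essentially a formal manipulation in the Grothendieck ring.
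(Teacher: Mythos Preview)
Your proposal is correct and follows exactly the paper's approach: the paper's entire argument is the sentence ``Thus, from Proposition \ref{prop-geom} we deduce that'' together with the transitivity identity $\mathbf{r}_{\alpha_\pi} = (1\otimes\mathbf{r}_{\alpha_{\pi'}})\circ\mathbf{r}_{(n-m,m)}$ recorded just before the statement, and your write-up simply spells out those two steps and the graded-component bookkeeping in full.
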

where $\mathbf{r}_{\alpha_{\pi'}}$ should be read as the zero functor when applied on representations of groups other than $G_m$.

\subsection{Permutation indexing}
We want to introduce certain ``coordinate systems" on the collection of multisegments $\mathbb{N}(\seg_{\langle \rho \rangle})$.

Let $\mathcal{P}_n$ denote the collection of tuples $(\lambda_1,\ldots,\lambda_n)\in\mathbb{Z}^n$, for which $\lambda_1\leq \lambda_2 \leq \ldots \leq \lambda_n$. It will sometimes be convenient to think of $\mathcal{P}_n$ as elements of $\mathbb{N}(\mathbb{Z})$, in the sense that $\lambda = \sum_{i=1}^n \mathbbm{1}_{\lambda_i}$. In particular, when $\lambda'\leq \lambda$ for $\lambda\in \mathcal{P}_n$ and $\lambda'\in \mathcal{P}_{m}$, we can write $\lambda-\lambda'\in \mathcal{P}_{n-m}$. That would mean the tuple constructed from $\lambda$ after removing the entries of $\lambda'$.

Let $\lambda = (\lambda_1,\ldots,\lambda_n)$, $\mu = (\mu_1,\ldots,\mu_n)\in \mathcal{P}_n$ be given. Let $w\in S_n$ be a permutation for which $\lambda_i \leq \mu_{w(i)}+1$ holds for all $1\leq i\leq n$. For such parameters we define a multisegment
\[
\gotM^w_{\lambda,\mu} = \sum_{i=1}^n [\lambda_i, \mu_{w(i)}]\in \mathbb{N}(\seg_{\langle \rho \rangle})\;.
\]
\begin{remark}
Such a presentation for a given multisegment is not unique. For example, a single segment $\gotM = \Delta = [0,1]$ can be written both as $\gotM^{e}_{(0),(1)}$ and as $\gotM^w_{(0,1),(0,1)}$, where $e$ is the identity in $S_1$ and $w$ is the transposition in $S_2$.
\end{remark}

The group $S_n$ naturally acts on tuples of $n$ integers. Let $S^\lambda$ denote the stabilizer of such tuple $\lambda$, which is clearly a parabolic subgroup of $S_n$ when viewed as a Coxter group (that is, a naturally embedded subgroup of the form $S_{n_1}\times \cdots \times S_{n_t}$). It is easy to see that $\gotM^w_{\lambda,\mu}$ is well-defined by the double-coset of $w$ in $S^\mu \setminus S_n / S^\lambda$.

Recall that $S_n$ is partially ordered by the \textit{Bruhat order}, which we will denote as $\leq$. See, for example, \cite[Section 2]{Hender} for a combinatorial description.

We write $Q(\lambda, \mu) \subset S_n$ for the set of permutations $w$, for which $\gotM^w_{\lambda,\mu}$ is defined ($\lambda_i \leq \mu_{w(i)}+1\;\forall i$). It is known \cite{Hender} that $Q(\lambda,\mu)$ is a lower ideal in the partially ordered set $(S_n, \leq)$. In other words, for any $w\in Q(\lambda,\mu)$ and $x\in S_n$ with $x\leq w$, we have $x\in Q(\lambda,\mu)$.

Let $S(\lambda,\mu)\subset Q(\lambda,\mu)$ be the set of permutations $w$ which are maximal in $S^\mu w S^\lambda$ with respect to the Bruhat order.

It is known that for any $x\in S(\lambda,\mu)$, the set of isomorphism classes of irreducible subquotients which appear in the standard module $M(\gotM^x_{\lambda,\mu})$ is given precisely by
\[
\{L(\gotM^w_{\lambda,\mu})\;:\; w\in S(\lambda,\mu),\, w\geq x\}\;.
\]

With this indexing of multisegments in hand, let us make the parabolic induction problem more precise.

\begin{proposition}\label{gen-decomp}
Let $\pi_i = L(\gotM_i)\in \irr_{\langle \rho \rangle}$, $i=1,\ldots, k$ be given, and suppose that $\sum_{i=1}^k \gotM_i = \gotM^x_{\lambda,\mu}$, where $\lambda,\mu\in \mathcal{P}_m$ and $x\in S(\lambda,\mu)$.

Then, the parabolic induction product decomposes in $\mathcal{R}$ as
\[
[\pi_1]\times \cdots \times [\pi_k]= \sum_{x\leq w\in S(\lambda,\mu)} m_w [L(\gotM^w_{\lambda,\mu})]\;,
\]
for some non-negative integers $m_w$.
\end{proposition}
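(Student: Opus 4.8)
\emph{Proof plan.} The plan is to compare the class $[\pi_1]\times\cdots\times[\pi_k]=[L(\gotM_1)]\times\cdots\times[L(\gotM_k)]$ with the class of the single standard module $M(\gotM^x_{\lambda,\mu})$, and then to invoke the description recalled just before the statement: for $x\in S(\lambda,\mu)$, the irreducible subquotients of $M(\gotM^x_{\lambda,\mu})$ are exactly the $L(\gotM^w_{\lambda,\mu})$ with $w\in S(\lambda,\mu)$ and $w\ge x$. Two elementary ingredients make the comparison work: a product of standard modules is again (the class of) a standard module, and in $\mathcal{R}$ the class $[L(\gotN)]$ is dominated by $[M(\gotN)]$ in the partial order defined by effectivity --- an order compatible with the multiplication of $\mathcal{R}$.

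First I would write each multisegment as a sum of segments, $\gotM_i=\Delta^i_1+\cdots+\Delta^i_{r_i}$, and use the identity $[M(\gotN)]=[L(\Delta_1)]\times\cdots\times[L(\Delta_r)]$ (for $\gotN=\Delta_1+\cdots+\Delta_r$) together with the commutativity of $\mathcal{R}$ to get
\[
\prod_{i=1}^k [M(\gotM_i)]=\prod_{i=1}^k\prod_{j=1}^{r_i}[L(\Delta^i_j)]=\Big[M\Big(\textstyle\sum_{i=1}^k\gotM_i\Big)\Big]=[M(\gotM^x_{\lambda,\mu})].
\]
By the recalled fact, every irreducible constituent of the right-hand side is of the form $L(\gotM^w_{\lambda,\mu})$ with $w\in S(\lambda,\mu)$ and $w\ge x$.

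Next I would note that, since $L(\gotM_i)$ occurs in $M(\gotM_i)$, the difference $[M(\gotM_i)]-[L(\gotM_i)]$ is effective, i.e.\ a non-negative integral combination of elements of $\irr$. The effective classes form a sub-semiring of $\mathcal{R}$ (a parabolic induction of genuine representations is a genuine representation), so the coefficient-wise order attached to the basis $\irr$ is compatible with multiplication; multiplying the inequalities $[L(\gotM_i)]\le[M(\gotM_i)]$ therefore yields
\[
0\ \le\ [\pi_1]\times\cdots\times[\pi_k]\ \le\ \prod_{i=1}^k[M(\gotM_i)]\ =\ [M(\gotM^x_{\lambda,\mu})]
\]
coefficient-wise. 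Hence every irreducible constituent of $\pi_1\times\cdots\times\pi_k$ is already a constituent of $M(\gotM^x_{\lambda,\mu})$, so it has the claimed form; and the multiplicities $m_w$ are non-negative simply because $[\pi_1]\times\cdots\times[\pi_k]$ is the class of the honest representation $\pi_1\times\cdots\times\pi_k$.

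I do not expect a genuinely hard step here: the proposition is in essence a repackaging, in the permutation indexing $\gotM\mapsto\gotM^w_{\lambda,\mu}$, of the already-available description of standard-module constituents. The only point that needs a word of care is the monotonicity of $\times$ with respect to the effective order, which follows at once from exactness and positivity of parabolic induction; alternatively one can expand $\prod_i[M(\gotM_i)]$, peel off the diagonal term $\prod_i[L(\gotM_i)]$, and argue inductively on the remaining products $[L(\gotN_1)]\times\cdots\times[L(\gotN_k)]$ (with each $\gotN_i$ a constituent of $M(\gotM_i)$, not all equal to $\gotM_i$), but this only reproduces the same domination estimate in a less transparent way.
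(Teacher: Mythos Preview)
Your proposal is correct and follows essentially the same approach as the paper. The paper's proof is a shade more direct: it observes that each $\pi_i$ is a \emph{quotient} of $M(\gotM_i)$, so by exactness of parabolic induction $\pi_1\times\cdots\times\pi_k$ is a quotient of $M(\gotM_1)\times\cdots\times M(\gotM_k)$, whose class in $\mathcal{R}$ equals $[M(\gotM^x_{\lambda,\mu})]$; your effectivity-order argument is the Grothendieck-group shadow of the same step.
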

\begin{proof}
Recall that $\pi_i$ is a quotient of the standard module $M(\gotM_i)$, for all $i$. Hence, $\pi_1\times\cdots\times \pi_k$ is a quotient of $M(\gotM_1)\times\cdots\times M(\gotM_k)$. But, in $\mathcal{R}$ we have $[M(\gotM_1)]\times\cdots\times [M(\gotM_k)] = [M(\gotM^x_{\lambda,\mu})]$. The result then follows from the characterization of subquotients of a standard module.
\end{proof}

\subsection{Kazhdan-Lusztig polynomials and a consequence}\label{sect-kl}
Given two permutations $x,w\in S_n$, let $P_{x,w}$ denote the associated Kazhdan-Lusztig polynomial. These are polynomials with integer coefficients which can be defined for any pair of elements of a Coxeter group. See, for example \cite[Chapter 7]{humph-cox}, for the basic construction.

For any $\lambda,\mu\in \mathcal{P}_m$ and $x\in S(\lambda,\mu)$, we have
\begin{equation}\label{kl-pol}
[M(\gotM^x_{\lambda,\mu})] = \sum_{x\leq w\in S(\lambda,\mu)} P_{x,w}(1) [L(\gotM^w_{\lambda,\mu})]\;.
\end{equation}

This formula is elegantly deduced in \cite{suzuki} from the known decomposition of Verma modules of the Lie algebra $\mathfrak{gl}_n$. A more direct approach is supplied by \cite{Hender}, which applies some combinatorial manipulations on geometric results on representations of $GL_n(F)$ which predate Suzuki's proof (see the introduction section of \cite{Hender} for a survey).

We stress that the coefficients in the above decomposition depend only on the permutations $x,w$ and not on $\lambda,\mu$.

Since both irreducible representations and standard modules give bases for $\mathcal{R}$, the coefficients in \eqref{kl-pol} should be thought of as a transition matrix between the two bases.
It is a well-known fact that for all $w\in S(\lambda,\mu)$, $P_{w,w}(1)=m(L(\gotM^w_{\lambda,\mu}), M(\gotM^w_{\lambda,\mu}))=1$. Because of that fact we can apply an inversion process on the equalities \eqref{kl-pol} to obtain the inverse transition matrix. Namely, for any $\lambda,\mu\in \mathcal{P}_m$ and $x\in S(\lambda,\mu)$,
\begin{equation}\label{kl-pol-inv}
[L(\gotM^x_{\lambda,\mu})] = \sum_{x\leq w\in S(\lambda,\mu)} c_{x,w} [M(\gotM^w_{\lambda,\mu})]\;.
\end{equation}
By viewing $\{P_{x,w}(1)\}_{x\leq w}$ as an element of the incidence algebra of the partially ordered set $S(\lambda,\mu)$, we see that $\{c_{x,w}]_{x\leq w}$ is its inverse. From a general formula for such inversion we see that the value $c_{x,w}$ depends only on values of the form $P_{x,z}(1)P_{z,w}(1)$ for $z\in S(\lambda,\mu)$. Thus, the notation for the values $c_{x,w}$ is justified in the sense that they, again, do not depend on $\lambda,\mu$.

This discussion has a curious consequence, which we want to describe here. We will first need to set some notation.

Let $\lambda,\mu\in \mathcal{P}_m$ be given together with partitions $\{1,\ldots,m\} = I_\lambda \dot{\cup} J_\lambda = I_\mu \dot{\cup} J_\mu$, for which $|I_\lambda|=|I_\mu|=k$ with $0<k<m$. We write $I_\lambda = \{i^\lambda_1 < \ldots< i^\lambda_k\}$, $J_\lambda = \{j^\lambda_1<\ldots< j^\lambda_{m-k}\}$ and similarly for $I_\mu, J_\mu$.


Given any two permutations $w_I\in S_k$ and $w_J\in S_{m-k}$, we construct a permutation $\widetilde{w_I\ast w_J}\in S_m$ by
\[
\widetilde{w_I\ast w_J}(i^\lambda_t) = i^\mu_{w_I(t)}\; \forall\,1\leq t\leq k\;,\quad
\widetilde{w_I\ast w_J}(j^\lambda_s) = j^\mu_{w_J(s)}\; \forall\,1\leq s\leq m-k\;.
\]

We write also $\lambda_I = (\lambda_{i^\lambda_t})_{t=1}^k, \mu_I = (\mu_{i^\mu_t})_{t=1}^k \in \mathcal{P}_k$ and $\lambda_J = (\lambda_{j^\lambda_s})_{s=1}^{m-k}, \mu_J = (\mu_{j^\mu_s})_{s=1}^{m-k}\in\mathcal{P}^{m-k}$. Clearly, $\lambda_I + \lambda_J= \lambda$ and $\mu_I + \mu_J = \mu$.

Now, it is easily checked that for all $w_I \in S(\lambda_I,\mu_I)$ and $w_J \in S(\lambda_J,\mu_J)$, we have $\widetilde{w_I\ast w_J} \in Q(\lambda,\mu)$ and
\[
\gotM^{w_I}_{\lambda_I, \mu_I} +  \gotM^{w_J}_{\lambda_J, \mu_J} = \gotM^{\widetilde{w_I\ast w_J}}_{\lambda,\mu}\;.
\]
We write $w_I\ast w_J\in S(\lambda,\mu) $ for the longest representative of the double-coset of $\widetilde{w_I\ast w_J}$. Note, that the construction of $w_I\ast w_J$ out of $w_I,w_J$ depends only on the partitions determined by $I_\lambda, I_\mu$ and the parabolic subgroups $S^\lambda,S^\mu$.

\begin{theorem}\label{thm-indp-len}
Suppose that $\lambda^1,\lambda^2, \mu^1,\mu^2\in \mathcal{P}_m$ are such that $S^{\lambda^1} = S^{\lambda^2}$ and $S^{\mu^1}= S^{\mu^2}$. Suppose that non-trivial partitions $\{1,\ldots,m\} = I_\lambda \dot{\cup} J_\lambda = I_\mu \dot{\cup} I_\mu$ with $|I_\lambda|=|I_\mu|=k$ are given.

Let $w_I \in S(\lambda^1_I, \mu^1_I) \cap S(\lambda^2_I, \mu^2_I)$ and $w_J \in S(\lambda^1_J, \mu^1_J) \cap S(\lambda^2_J, \mu^2_J)$ be two given permutations, and let
\[
\left[L(\gotM^{w_I}_{\lambda^\ell_I,\mu^\ell_I})\right]\times  \left[L(\gotM^{w_J}_{\lambda^\ell_J,\mu^\ell_J})\right]= \sum_{w_I\ast w_J \,\leq z\in S(\lambda^\ell,\mu^\ell)} m^\ell_z \left[L(\gotM^z_{\lambda^\ell,\mu^\ell})\right]\;,\quad\ell=1,2\;,
\]
be the decompositions as in Proposition \ref{gen-decomp}.

Then, for all $w_I\ast w_J \,\leq z\in S(\lambda^1,\mu^1)\cap S(\lambda^2,\mu^2)$,
\[
m^1_w = m^2_w\;.
\]
\end{theorem}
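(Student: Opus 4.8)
The strategy is to pass everything through the Kazhdan–Lusztig transition matrices of Section~\ref{sect-kl} and exploit the crucial feature emphasised there: the coefficients $P_{x,w}(1)$ and their inverses $c_{x,w}$ depend only on the pair of permutations $(x,w)$, not on the tuples $\lambda,\mu$. Thus the whole multiplicity $m^\ell_z$ should be expressible as a sum of products of such coefficients, indexed by purely combinatorial data (the permutations $w_I, w_J, z$ and the fixed partitions $I_\lambda\dot\cup J_\lambda$, $I_\mu\dot\cup J_\mu$), with no residual dependence on whether we are in situation $\ell=1$ or $\ell=2$. Since the hypotheses guarantee $S^{\lambda^1}=S^{\lambda^2}$ and $S^{\mu^1}=S^{\mu^2}$, the relevant index sets $S(\lambda^\ell,\mu^\ell)$, $S(\lambda^\ell_I,\mu^\ell_I)$, $S(\lambda^\ell_J,\mu^\ell_J)$ and the $\ast$-operation all coincide for $\ell=1,2$ on the range we care about, so that an honest formula of this shape immediately yields $m^1_z=m^2_z$.

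Concretely, I would proceed as follows. First, rewrite each factor on the left using the inverse transition \eqref{kl-pol-inv}:
\[
\left[L(\gotM^{w_I}_{\lambda^\ell_I,\mu^\ell_I})\right] = \sum_{w_I\leq a} c_{w_I,a}\left[M(\gotM^{a}_{\lambda^\ell_I,\mu^\ell_I})\right],\qquad
\left[L(\gotM^{w_J}_{\lambda^\ell_J,\mu^\ell_J})\right] = \sum_{w_J\leq b} c_{w_J,b}\left[M(\gotM^{b}_{\lambda^\ell_J,\mu^\ell_J})\right].
\]
Multiplying, and using that in the commutative ring $\mathcal{R}$ one has $[M(\gotM_1)]\times[M(\gotM_2)] = [M(\gotM_1+\gotM_2)]$ together with the additivity of the $\ast$-construction, namely $\gotM^{a}_{\lambda^\ell_I,\mu^\ell_I}+\gotM^{b}_{\lambda^\ell_J,\mu^\ell_J} = \gotM^{a\ast b}_{\lambda^\ell,\mu^\ell}$, the product becomes
\[
\left[L(\gotM^{w_I}_{\lambda^\ell_I,\mu^\ell_I})\right]\times\left[L(\gotM^{w_J}_{\lambda^\ell_J,\mu^\ell_J})\right] = \sum_{w_I\leq a,\ w_J\leq b} c_{w_I,a}\, c_{w_J,b}\,\left[M(\gotM^{a\ast b}_{\lambda^\ell,\mu^\ell})\right].
\]
Now expand each $[M(\gotM^{a\ast b}_{\lambda^\ell,\mu^\ell})]$ back into irreducibles via \eqref{kl-pol}, extracting the coefficient of $[L(\gotM^z_{\lambda^\ell,\mu^\ell})]$. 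This gives
\[
m^\ell_z = \sum_{w_I\leq a,\ w_J\leq b} c_{w_I,a}\, c_{w_J,b}\, P_{a\ast b,\, z}(1),
\]
a sum over the index sets $a\in S(\lambda^\ell_I,\mu^\ell_I)$, $b\in S(\lambda^\ell_J,\mu^\ell_J)$ which, by the hypothesis on the parabolic subgroups, are the same for $\ell=1,2$; and every coefficient appearing is a KL quantity depending only on permutations. Hence $m^1_z=m^2_z$ for all $z$ in the common range $w_I\ast w_J\leq z\in S(\lambda^1,\mu^1)\cap S(\lambda^2,\mu^2)$.

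The main point requiring care — and what I expect to be the real content — is the compatibility of the $\ast$-operation with the transition matrices at the level of \emph{standard modules}, i.e.\ verifying that $a\ast b$ is indeed the longest double-coset representative of $\widetilde{a\ast b}$ whenever $a\in S(\lambda_I,\mu_I)$, $b\in S(\lambda_J,\mu_J)$, and that $\gotM^{a}_{\lambda_I,\mu_I}+\gotM^{b}_{\lambda_J,\mu_J}=\gotM^{a\ast b}_{\lambda,\mu}$ holds as multisegments; these are exactly the facts asserted in the paragraph preceding the theorem, so they may be invoked. A secondary subtlety is bookkeeping with the ranges of summation: one must check that the terms $P_{a\ast b,z}(1)$ with $z$ outside $S(\lambda^\ell,\mu^\ell)$ simply do not arise (they are zero because $Q(\lambda^\ell,\mu^\ell)$ is a lower ideal and the product lies in the appropriate span, by Proposition~\ref{gen-decomp}), so that the formula for $m^\ell_z$ really is independent of $\ell$ on the stated common range. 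Once these combinatorial compatibilities are in place, the proof is just the chain of substitutions above.
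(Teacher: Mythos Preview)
Your approach is exactly the paper's: expand each factor via \eqref{kl-pol-inv}, multiply standard modules using $\gotM^{a}_{\lambda_I,\mu_I}+\gotM^{b}_{\lambda_J,\mu_J}=\gotM^{a\ast b}_{\lambda,\mu}$, then re-expand via \eqref{kl-pol} to obtain the formula
\[
m^\ell_z=\sum_{(a,b)} c_{w_I,a}\,c_{w_J,b}\,P_{a\ast b,\,z}(1),
\]
which is precisely the paper's \eqref{coeff-form}.

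There is one imprecise step. You assert that the index sets $S(\lambda^\ell_I,\mu^\ell_I)$ and $S(\lambda^\ell_J,\mu^\ell_J)$ ``by the hypothesis on the parabolic subgroups, are the same for $\ell=1,2$.'' This is not what the hypotheses give: the equalities $S^{\lambda^1}=S^{\lambda^2}$, $S^{\mu^1}=S^{\mu^2}$ force the double-coset structure (hence the notion of ``longest representative'') to coincide, but they do \emph{not} force $Q(\lambda^1_I,\mu^1_I)=Q(\lambda^2_I,\mu^2_I)$, since membership in $Q$ depends on the actual entries of the tuples. The paper repairs this exactly via the lower-ideal property you mention, but applied in the other direction: since $P_{a\ast b,\,z}(1)=0$ unless $a\ast b\le z$, and $Q(\lambda^\ell,\mu^\ell)$ is a lower ideal containing $z$, every contributing pair automatically has $a\ast b\in S(\lambda^\ell,\mu^\ell)$, whence $a\in S(\lambda^\ell_I,\mu^\ell_I)$ and $b\in S(\lambda^\ell_J,\mu^\ell_J)$. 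Thus the \emph{effective} summation range --- pairs $(a,b)\in S_k\times S_{m-k}$ that are longest in their respective double cosets and satisfy $w_I\le a$, $w_J\le b$, $a\ast b\le z$ --- is $\ell$-independent. With this correction your argument goes through and matches the paper's.
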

\begin{proof}

It follows from \eqref{kl-pol-inv} that for $\ell =1,2$,
\[
\left[L(\gotM^{w_I}_{\lambda^\ell_I,\mu^\ell_I})\right]\times  \left[L(\gotM^{w_J}_{\lambda^\ell_J,\mu^\ell_J})\right] =
\]
\[
=\sum_{w_I\leq w\in S(\lambda^\ell_I,\mu^\ell_I)}\sum_{w_J\leq w'\in S(\lambda^\ell_J,\mu^\ell_J)} c_{w_I,w}c_{w_J,w'} [M(\gotM^w_{\lambda^\ell_I,\mu^\ell_I})] [M(\gotM^{w'}_{\lambda^\ell_J,\mu^\ell_J})]\;.
\]
Yet, $\gotM^w_{\lambda^\ell_I,\mu^\ell_I} + \gotM^{w'}_{\lambda^\ell_J,\mu^\ell_J} = \gotM^{w\ast w'}_{\lambda^\ell,\mu^\ell}$ means that $[M(\gotM^w_{\lambda^\ell_I,\mu^\ell_I})] [M(\gotM^{w'}_{\lambda^\ell_J,\mu^\ell_J})] = [M(\gotM^{w\ast w'}_{\lambda^\ell,\mu^\ell})]$.

Thus, by applying \eqref{kl-pol} we see that for all $w_I\ast w_J\leq z\in S(\lambda^\ell,\mu^\ell)$, we have
\begin{equation}\label{coeff-form}
 m^\ell_z  = \sum_{w,w'} c_{w_I,w}c_{w_J,w'} P_{w\ast w', z}(1)\;,
\end{equation}
where the sum is over pairs $(w,w')\in S(\lambda^\ell_I,\mu^\ell_I)\times S(\lambda^\ell_J,\mu^\ell_J)$ satisfying $w_J\leq w$, $w_I\leq w'$ and $w\ast w'\leq z$.

Since $Q(\lambda^\ell,\mu^\ell)$ is a lower ideal, for $z\in S(\lambda^\ell,\mu^\ell)$ and for any pair $(w,w')\in S_k \times S_{m-k}$ for which $w\ast w'\leq z$ holds, we have $w\ast w'\in S(\lambda^\ell,\mu^\ell)$. It is easy to check that this implies $w\in S(\lambda^\ell_I,\mu^\ell_I)$ and $w'\in S(\lambda^\ell_J,\mu^\ell_J)$. Hence, the sum in equation \eqref{coeff-form} can be taken over pairs $(w,w')\in S_k \times S_{m-k}$ which are longest in their respective $S^{\lambda^\ell_I}-S^{\mu^\ell_I}$ or $S^{\lambda^\ell_J}-S^{\mu^\ell_J}$ double-coset, and satisfy the three conditions detailed above.

Since the condition $S^{\lambda^1} = S^{\lambda^2}$ clearly implies $S^{\lambda^1_I}=S^{\lambda^2_I}$ and $S^{\lambda^1_J}=S^{\lambda^2_J}$, we conclude that in the case of $w_I\ast w_J \,\leq z\in S(\lambda^1,\mu^1)\cap S(\lambda^2,\mu^2)$ the sum in equation \eqref{coeff-form} does not depend on $\ell$.

\end{proof}

\begin{corollary}\label{cor-add}
Let $\gotM_1, \gotM_2\in \mathbb{N}(\seg_{\langle \rho \rangle})$ be given multisegments. Let $s\in \mathbb{Z}$ be a fixed positive integer, and let $\check{\gotM}_i$, $i=1,2$ be the multisegment constructed from $\gotM_i$ by taking every segment $[a,b]\in \underline{\gotM_i}$ and changing it into $[a,b+s]\in \underline{\check{\gotM}_i}$.

We can then write $\gotM_1 + \gotM_2 = \gotM^x_{\lambda,\mu}$ and $\check{\gotM}_1 + \check{\gotM}_2 = \gotM^x_{\lambda,\check{\mu}}$, for $x\in S(\lambda,\mu)$, where $\check{\mu}$ is constructed from $\mu$ by adding $s$ to all its entries. When decomposing as in Proposition \ref{gen-decomp},
\[
[L(\gotM_1)]\times [L(\gotM_2)]= \sum_{x\leq w\in S(\lambda,\mu)} m_w [L(\gotM^w_{\lambda,\mu})]\;,
\]
\[
[L(\check{\gotM}_1)]\times [L(\check{\gotM}_2)]= \sum_{x\leq w\in S(\lambda,\check{\mu})} \check{m}_w [L(\gotM^w_{\lambda,\check{\mu}})]\;,
\]
we have $m_w = \check{m}_w$ for all $w\in S(\lambda,\mu)$.
\end{corollary}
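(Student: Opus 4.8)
The idea is to read the transformation $\gotM_i \mapsto \check{\gotM}_i$ --- which adds the same constant $s$ to the right endpoint of every segment --- as an instance of the setting of Theorem~\ref{thm-indp-len}. On the level of the parameters $(\lambda,\mu)$ this transformation merely replaces $\mu$ by the globally shifted tuple $\check{\mu}=\mu+(s,\dots,s)$, and a global shift changes none of the combinatorial data (stabilizers in $S_m$, membership in the ideals $Q(\cdot,\cdot)$, the $\ast$-operation, Bruhat-maximal double-coset representatives) that enters that theorem.

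I would begin by fixing presentations; we may assume $\gotM_1,\gotM_2\neq 0$, the remaining cases being immediate. Let $m=\#(\gotM_1+\gotM_2)$, and let $\lambda\in\mathcal{P}_m$ and $\mu\in\mathcal{P}_m$ be the non-decreasing rearrangements of the multisets of left endpoints $\{b(\Delta)\}$ and of right endpoints $\{e(\Delta)\}$ of the segments of $\gotM_1+\gotM_2$. Choose $x\in S(\lambda,\mu)$ with $\gotM_1+\gotM_2=\gotM^x_{\lambda,\mu}$. Since $\check{\gotM}_1+\check{\gotM}_2$ has the same left endpoints as $\gotM_1+\gotM_2$ and its right endpoints are those of $\gotM_1+\gotM_2$ each increased by $s$, the same permutation pairs endpoints, so $\check{\gotM}_1+\check{\gotM}_2=\gotM^x_{\lambda,\check{\mu}}$ with $\check{\mu}=\mu+(s,\dots,s)\in\mathcal{P}_m$. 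I then record three elementary facts: (i) $S^{\check{\mu}}=S^\mu$, because $\check{\mu}$ has exactly the pattern of repeated entries that $\mu$ has; (ii) $Q(\lambda,\mu)\subseteq Q(\lambda,\check{\mu})$, since the inequalities $\lambda_i\leq\mu_{w(i)}+1$ imply $\lambda_i\leq\check{\mu}_{w(i)}+1$; and, combining (i) and (ii), (iii) $S(\lambda,\mu)\subseteq S(\lambda,\check{\mu})$, so in particular $x\in S(\lambda,\check{\mu})$ and both decompositions in the statement are of the form furnished by Proposition~\ref{gen-decomp}.

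Next I would assemble the data needed to invoke Theorem~\ref{thm-indp-len}. Put $k=\#\gotM_1$ (so $0<k<m$), let $\lambda^{(1)},\mu^{(1)}\in\mathcal{P}_k$, resp.\ $\lambda^{(2)},\mu^{(2)}\in\mathcal{P}_{m-k}$, be the non-decreasing rearrangements of the left and right endpoints of $\gotM_1$, resp.\ $\gotM_2$, and let $w_I\in S(\lambda^{(1)},\mu^{(1)})$, $w_J\in S(\lambda^{(2)},\mu^{(2)})$ be the permutations with $\gotM_1=\gotM^{w_I}_{\lambda^{(1)},\mu^{(1)}}$, $\gotM_2=\gotM^{w_J}_{\lambda^{(2)},\mu^{(2)}}$. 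As the multiset of left endpoints of $\gotM_1+\gotM_2$ is the union of those of $\gotM_1$ and of $\gotM_2$, I may pick partitions $\{1,\dots,m\}=I_\lambda\dot{\cup}J_\lambda=I_\mu\dot{\cup}J_\mu$ with $|I_\lambda|=|I_\mu|=k$ so that, in the notation preceding Theorem~\ref{thm-indp-len}, $\lambda_I=\lambda^{(1)}$, $\lambda_J=\lambda^{(2)}$, $\mu_I=\mu^{(1)}$, $\mu_J=\mu^{(2)}$. Then $\gotM^{w_I}_{\lambda_I,\mu_I}+\gotM^{w_J}_{\lambda_J,\mu_J}=\gotM^{\widetilde{w_I\ast w_J}}_{\lambda,\mu}$ equals $\gotM_1+\gotM_2=\gotM^x_{\lambda,\mu}$, so $\widetilde{w_I\ast w_J}$ lies in the double coset $S^\mu x S^\lambda$; as both $w_I\ast w_J$ and $x$ are the Bruhat-longest element of that coset, $w_I\ast w_J=x$. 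Crucially, none of $w_I,w_J,I_\lambda,J_\lambda,I_\mu,J_\mu$ need change when $\mu$ is replaced by $\check{\mu}$: the segment $\check{\gotM}_1$ has left endpoints $\lambda^{(1)}$ and right endpoints $\check{\mu}^{(1)}:=\mu^{(1)}+(s,\dots,s)$, and since $S^{\check{\mu}^{(1)}}=S^{\mu^{(1)}}$ the same $w_I$ satisfies $\check{\gotM}_1=\gotM^{w_I}_{\lambda^{(1)},\check{\mu}^{(1)}}$ with $w_I\in S(\lambda^{(1)},\check{\mu}^{(1)})$, and likewise for $w_J$; the permutation $\widetilde{w_I\ast w_J}$ involves only $I_\lambda,I_\mu,w_I,w_J$ and not $\lambda,\mu$; and Bruhat-maximality within a double coset is taken relative to $S^\lambda$ and $S^{\check{\mu}}=S^\mu$. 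Hence $w_I\ast w_J=x$ also with respect to $(\lambda,\check{\mu})$.

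Finally I would apply Theorem~\ref{thm-indp-len} with $\lambda^1=\lambda^2=\lambda$, $\mu^1=\mu$, $\mu^2=\check{\mu}$, the partitions above, and the permutations $w_I,w_J$ --- which by the previous paragraph lie in $S(\lambda^1_I,\mu^1_I)\cap S(\lambda^2_I,\mu^2_I)$ and in $S(\lambda^1_J,\mu^1_J)\cap S(\lambda^2_J,\mu^2_J)$. The conclusion gives $m_z=\check{m}_z$ for every $x=w_I\ast w_J\leq z\in S(\lambda,\mu)\cap S(\lambda,\check{\mu})$, and by (iii) this intersection is $S(\lambda,\mu)$; for $w\in S(\lambda,\mu)$ not above $x$ both $m_w$ and $\check{m}_w$ vanish by Proposition~\ref{gen-decomp}, which completes the proof. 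I expect the only genuinely delicate point to be the last invariance check of the previous paragraph --- that the $\ast$-construction and Bruhat-maximality within double cosets are untouched by the global shift $\mu\leadsto\check{\mu}$; the rest is bookkeeping with marginals of multisets of endpoints.
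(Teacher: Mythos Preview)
Your proof is correct and follows essentially the same route as the paper's: write $\gotM_i=\gotM^{w_I}_{\lambda^{(i)},\mu^{(i)}}$, recognize $x=w_I\ast w_J$, and invoke Theorem~\ref{thm-indp-len} with $(\lambda^1,\mu^1)=(\lambda,\mu)$ and $(\lambda^2,\mu^2)=(\lambda,\check{\mu})$, together with the observation $S(\lambda,\mu)\subseteq S(\lambda,\check{\mu})$. Your write-up is more explicit than the paper's in checking the hypotheses (invariance of stabilizers, of the $\ast$-construction, and of Bruhat-maximality under the global shift; the degenerate cases $\gotM_1=0$ or $\gotM_2=0$; and the vanishing of $m_w,\check{m}_w$ for $w\not\geq x$), but the argument is the same.
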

\begin{proof}
Let us write $\gotM_i = \gotM^{y_i}_{\lambda_i,\mu_i}$, for $i=1,2$. It is clear then that $\check{\gotM}_i = \gotM^{y_i}_{\lambda_i,\check{\mu}_i}$, where $\check{\mu}_i$ are constructed by adding the constant $s$ to the entries of $\mu_i$.

Then, $\lambda = \lambda_1 + \lambda_2$, $\mu= \mu_1 + \mu_2$, $\check{\mu} = \check{\mu}_1 + \check{\mu}_2$ and $x = y_1\ast y_2\in S_m$, where the $\ast$ operation is given by taking partitions of $\{1,\ldots,m\}$ which correspond to the above additive decompositions of $\lambda^1$ and $\mu^1$.

The statement follows from Theorem \ref{thm-indp-len} after noting that $S(\lambda,\mu)\subset S(\lambda,\check{\mu})$.
\end{proof}

\section{Width invariant}\label{sect-wid}

\subsection{Special classes}

\begin{definition}
A representation $\pi\in \irr$ is called a \textit{ladder representation} if $\pi\in \irr_{\langle \rho \rangle}$ for some $\rho\in \mathcal{C}$ and $\pi= L(\gotM)$, where $\gotM=[a_1,b_1]_\rho+\ldots+[a_k,b_k]_\rho\in \mathbb{N}(\seg_{\langle \rho \rangle})$ is such that $a_1<\ldots<a_k$ and $b_1<\ldots<b_k$. In this case we will also call $\gotM$ a ladder multisegment.
\end{definition}

In \cite{ram}, using the setting of affine Hecke algebras it was shown that ladder representations are characterized as the elements of $\irr$ whose Jacquet modules are all completely reducible representations. In fact, that property was taken as the definition of a \textit{calibrated} representation.

A full description of the Jacquet functor of a ladder representation in terms of multisegments was given in \cite{LapidKret}. In particular, all Jacquet module components of a ladder representation are ladder representations themselves. We will apply that description extensively throughout this work. It is more natural to refrain from recalling the general formula now, but rather give it specifically in each case when necessary.

For reference, let us recall here the formula for a simple special case of such a description, which was known much earlier (see \cite{Zel}). This is the case of a single segment representation, whose Jacquet modules for maximal parabolic subgroups is given as follows:
\begin{equation}\label{jac-seg}
\sum_{k=0}^{n}[\mathbf{r}_{(k,n-k)}(L([a,b])] = \sum_{c=a-1}^b [L([c+1,b])\otimes L([a,c])] \;,
\end{equation}
for any $L([a,b])\in \irr_{\langle \rho \rangle}\cap \irr(G_n)$.

\begin{definition}
For $k,n\in \mathbb{Z}$, let $\gotM(k,n)\in \mathbb{N}(\seg_{\langle \rho \rangle})$ be the multisegment given as a sum of $n$ copies of the segment $[k,k]$. We call the representation
\[
K(\rho, k,n) = L(\gotM(k,n)) = (\rho\nu^k)\times \cdots\times (\rho\nu^k) \;(n\mbox{ times })\;\in \irr_{\langle \rho \rangle}
\]
a (generalized) \textit{Kato module}.
\end{definition}

These representations are named after S.I. Kato who handled in \cite{kato-mod} this class in a greater generality for the setting of affine Hecke algerbas. The Jacquet modules constructed from Kato modules are far from being completely reducible.

More precisely, it is shown in \cite[Proposition 3.3]{groj-vaz} that the socle (and co-socle) filtration\footnote{The socle filtration on a representation space $V$ would be the sequence of sub-representations $\{0\}= V_0 \subset V_1 \subset \ldots \subset V_t = V$, where $V_i/V_{i-1}$ is the socle (maximal completely reducible sub-representation) of $V/V_{i-1}$.} of the representation $\mathbf{r}_\alpha(K(\rho, k,n))$ has length $n$, where $\alpha = ((n-1)m,m)$ and $\rho\in \mathcal{C}(G_m)$. In that sense, these irreducible representations lie at the opposite extreme to ladder representations.

In addition, it is also shown that
\begin{equation}\label{kato-for}
[\mathbf{r}_\alpha(K(\rho,k,n))] = n\cdot[K(\rho,k,n-1)\otimes \rho\nu^k]\;.
\end{equation}

\begin{remark}
Since the results of \cite{groj-vaz} were worked out for representations of affine Hecke algebras, they a priori hold only when $\rho$ is the trivial representation of $G_1$. Yet, by well-known considerations of equivalences of categories which were outlined in the introduction section we can deduce the same analysis for our generalized notion of Kato modules.

As pointed out by the anonymous referee, it is also an easy exercise to deduce Equation \eqref{kato-for} directly from the Geometric Lemma (Proposition \ref{prop-geom}).
\end{remark}

\subsection{Invariants}

Given $\pi\in \mathfrak{R}(G_n)$  with $[\pi]\in \mathcal{R}_{\langle \rho \rangle}$, let us write $\kappa(\pi)$ for the maximal number $m$, for which $\pi$ has a Jacquet module component isomorphic to $K(\rho,k,m)$, for some $k$.

From our previous remarks on the Jacquet functor of Kato modules we see that $\pi$ must have a Jacquet module whose socle filtration is of length at least $\kappa(\pi)$. Thus, $\kappa$ can be viewed as a measure of semi-simplicity of Jacquet modules.

We will see that the same invariant $\kappa$ can be defined in several additional methods, which together produce a useful tool for treating our main problem.

\begin{definition}
The \textit{width} $\wid(\gotM)$ of a non-trivial multisegment $\gotM\in \mathbb{N}(\seg_{\langle \rho \rangle})$ is the minimal number $k$, for which it is possible to write $\gotM = \gotM_1 +\ldots + \gotM_k$ for some ladder multisegments $\gotM_1,\ldots,\gotM_k\in \mathbb{N}(\seg_{\langle \rho \rangle}) $.

We also write $\wid(\pi) = \wid(\gotM)$ for the width of the representation $\pi = L(\gotM)\in \irr_{\langle \rho \rangle}$.

For any $\pi\in \mathfrak{R}(G_n)$ with $[\pi]\in \mathcal{R}_{\langle \rho \rangle}$ the definition is extended by
\[
\wid(\pi) = \max\{ \wid(\sigma)\::\:\sigma\in\irr(G_n),\; m(\sigma,\pi)>0\}.
\]

\end{definition}

Note that ladder representations are precisely those irreducible representations with width $1$. Note too that $\wid(\gotM)$ is always bounded by the number $\# \gotM$ of segments in $\gotM$ and can reach equality, e.g. for Kato modules.

Now, let us consider the relation $\preceq'$ on $\seg_{\langle \rho \rangle}$ that is defined by
\[
[a,b]\preceq' [c,d]\quad \Leftrightarrow \quad \mbox{either}\;\left\{\begin{array}{l} a< c\\ b<d\end{array} \right. \;\mbox{or}\; [a,b]=[c,d].
\]
This relation can be viewed as the transitive and reflexive closure of $\prec$. Note that if $\Delta_1,\Delta_2\in \seg_{\langle \rho \rangle}$ are such that $\Delta_1\npreceq' \Delta_2$ and $\Delta_2\npreceq' \Delta_1$, then we must have either $\Delta_1\subseteq \Delta_2$ or $\Delta_2 \subseteq \Delta_1$.

\begin{lemma}\label{dilw}
For every $\gotM\in \mathbb{N}(\seg_{\langle \rho \rangle})$,
\[
\wid(\gotM) = \max\{ k\;:\;  \mbox{there are non-trivial segments }\Delta_1\subseteq \ldots\subseteq \Delta_k\mbox{ s.t. }\Delta_1+\ldots+\Delta_k\leq\gotM \}.
\]
\end{lemma}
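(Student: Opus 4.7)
The plan is to realize the lemma as an instance of Dilworth's theorem applied to a finite poset manufactured from $\gotM$. First I would introduce the \emph{token set}
\[
T = \{(\Delta, i) \,:\, \Delta \in \underline{\gotM},\ 1\leq i\leq \gotM(\Delta)\},
\]
and equip it with the partial order $\leq_T$ defined by $(\Delta, i) \leq_T (\Delta', j)$ iff either $(\Delta,i) = (\Delta',j)$, or $\Delta \neq \Delta'$ and $\Delta \preceq' \Delta'$. Transitivity of $\leq_T$ reduces to transitivity of $\preceq'$, together with the observation that $\Delta \preceq' \Delta' \preceq' \Delta''$ with all three pairwise distinct forces $\Delta \neq \Delta''$ (the $b$-coordinates are strictly increasing). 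Antisymmetry is immediate from that of $\preceq'$.

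Next I would identify chains with ladders. A $\leq_T$-chain $(\Delta_1, i_1) <_T \cdots <_T (\Delta_r, i_r)$ has the underlying segments pairwise distinct and strictly $\preceq'$-increasing, so $\Delta_1 + \cdots + \Delta_r$ is a ladder sub-multisegment of $\gotM$. Conversely, any ladder decomposition $\gotM = \gotM_1 + \cdots + \gotM_m$ partitions $T$ into $m$ chains, since each ladder uses every segment at most once and hence distinct tokens of the same segment land in distinct pieces. It follows that $\wid(\gotM)$ equals the minimum size of a chain-partition of $(T, \leq_T)$.

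Symmetrically, I would identify antichains with $\subseteq$-chains. Two distinct tokens $(\Delta,i)\neq(\Delta',j)$ are $\leq_T$-incomparable iff either $\Delta = \Delta'$, or $\Delta$ and $\Delta'$ are distinct and $\preceq'$-incomparable; by the observation preceding the lemma, the latter forces one of $\Delta \subsetneq \Delta'$ or $\Delta' \subsetneq \Delta$. Hence an antichain of size $k$ in $T$ projects onto a chain $\Delta_1 \subseteq \cdots \subseteq \Delta_k$ in $(\seg_{\langle \rho \rangle}, \subseteq)$ with $\Delta_1 + \cdots + \Delta_k \leq \gotM$, the equalities coming from tokens that share an underlying segment; conversely any such chain lifts to an antichain by choosing distinct indices for repeated segments. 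So the maximum antichain size in $T$ is precisely the integer on the right-hand side of the lemma.

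Applying Dilworth's theorem to the finite poset $(T, \leq_T)$ equates the minimum chain-partition cardinality with the maximum antichain cardinality, which by the previous two steps is the required equality. No step presents a real obstacle beyond the bookkeeping in setting up $\leq_T$; the delicate point is that two distinct mechanisms generate $\leq_T$-incomparability, namely multiplicities in $\gotM$ and strict $\subseteq$-nesting of distinct segments, and these have to be handled uniformly inside a single partial order so that the correspondences with ladder decompositions and $\subseteq$-chains become faithful.
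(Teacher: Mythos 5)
Your proposal is correct and follows the same route as the paper: view the segments of $\gotM$ counted with multiplicity as a finite poset under $\preceq'$, note that chains are ladders and antichains are $\subseteq$-nested families, and invoke Dilworth's theorem. You have merely spelled out the poset structure on the "token set" (and its transitivity) more carefully than the paper's terse proof.
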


\begin{proof}
Note that the collection of segments in $\gotM$, counted with multiplicities, together with the relation $\preceq'$ is a partially ordered set. A chain for this poset would give a ladder multisegment, while an antichain is a multisegment $\Delta_1+\ldots+\Delta_n\leq \gotM$ for which $\Delta_1\subseteq \ldots\subseteq \Delta_n$ holds. Thus, the statement follows from Dilworth's theorem \cite{dilw}.

\end{proof}

\begin{lemma}\label{first-ineq}
For every $\pi\in\mathfrak{R}(G_n)$ with $[\pi]\in \mathcal{R}_{\langle \rho \rangle}$, we have
\[
\wid(\pi) \leq \kappa(\pi).
\]
\end{lemma}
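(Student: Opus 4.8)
The inequality $\wid(\pi)\le\kappa(\pi)$ says that if $\pi$ has an irreducible subquotient $\sigma$ of width $k$, then $\pi$ has a Jacquet module component isomorphic to a Kato module $K(\rho,k',k)$ for some $k'$. Since the Jacquet functors $\mathbf{r}_\alpha$ are exact, every Jacquet module component of $\sigma$ is also a Jacquet module component of $\pi$; thus it suffices to prove the statement for $\pi=\sigma=L(\gotM)\in\irr_{\langle\rho\rangle}$ with $\wid(\gotM)=k$, i.e.\ to exhibit some $K(\rho,k',k)$ as a Jacquet module component of $L(\gotM)$. So the whole problem reduces to: \emph{a multisegment of width $k$ admits a Kato module of length $k$ as a Jacquet module component.}

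\textbf{Key steps.} First I would use Lemma \ref{dilw}: width $k$ means there is a chain of nested segments $\Delta_1\subseteq\cdots\subseteq\Delta_k\le\gotM$ for the containment relation, say $\Delta_i=[a_i,b_i]$ with $a_k\le\cdots\le a_1$ and $b_1\le\cdots\le b_k$. These $k$ nested segments all share a common integer $c$ in their intersection (any $c$ with $a_1\le c\le b_1$ works). The strategy is then to find a Levi subgroup $M_\alpha$ and a leftmost (or any) component of $\mathbf{r}_\alpha(L(\gotM))$ in which the factor $K(\rho,c,k)=(\rho\nu^c)^{\times k}$ appears. Concretely: iterate equation \eqref{jac-seg} — taking from each $\Delta_i$ (and from the standard module $M(\gotM)$, of which $L(\gotM)$ is a quotient) the ``cut at $c$'' piece $L([c,c])=\rho\nu^c$ — together with the Geometric Lemma (Proposition \ref{prop-geom}) to see that $(\rho\nu^c)^{\times k}\otimes(\text{rest})$ occurs in an appropriate $\mathbf{r}_\alpha$ of $M(\gotM)$, hence (since $L(\gotM)$ is a quotient of $M(\gotM)$, but one must be slightly careful — see below) of $L(\gotM)$. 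Alternatively, invoke the explicit Kret--Lapid/Zelevinski-type description of Jacquet modules; but since $L(\gotM)$ is not a ladder here I'd prefer the cruder route via standard modules, combined with the observation that cutting produces a generic (multiplicity-one) piece so the passage from $M(\gotM)$ to its quotient $L(\gotM)$ can be controlled. It then remains to check $(\rho\nu^c)^{\times k}$ is itself irreducible — it is, being $K(\rho,c,k)$ by definition — and that it is literally a tensor factor of an irreducible constituent of $\mathbf{r}_\alpha(L(\gotM))$, which is what ``Jacquet module component'' means. Finally, $\kappa(\pi)\ge k$ follows and, taking $k=\wid(\pi)$, we get $\wid(\pi)\le\kappa(\pi)$ for general $\pi$ by the exactness reduction of the first step.

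\textbf{Main obstacle.} The delicate point is the descent from the standard module $M(\gotM)$ to its irreducible quotient $L(\gotM)$: a Jacquet module component of $M(\gotM)$ need not survive in $L(\gotM)$ in general. The way around this is to choose the cut carefully so that the Kato-module factor appears as a genuinely \emph{leftmost} (or rightmost) component, where adjunction gives exact control: by Frobenius reciprocity, an irreducible quotient $\sigma_1\otimes\cdots\otimes\sigma_r$ of $\mathbf{r}_\alpha(L(\gotM))$ is detected by $\Hom$, and one can arrange $(\rho\nu^c)^{\times k}=\sigma_1$ by a suitable ordering of the inducing segments (this is exactly the mechanism behind the indicator representation $\pi_\otimes$ and Proposition \ref{prop-count-mul}). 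So the real content is: pick the enumeration of the segments of $\gotM$ so that the $k$ nested segments are induced last (or first), cut each at the common point $c$ using \eqref{jac-seg}, and read off $K(\rho,c,k)$ as the leftmost component via Proposition \ref{prop-geom} applied to the standard module together with the fact that $L(\gotM)\hookrightarrow\tilde M(\gotM)$; the genericity of the cut pieces ensures this component is inherited by $L(\gotM)$. I expect managing this ordering/adjunction bookkeeping, rather than any deep representation-theoretic input, to be the crux.
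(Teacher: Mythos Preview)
Your plan is correct and matches the paper's approach almost exactly: reduce to irreducible $\pi=L(\gotM)$, invoke Lemma~\ref{dilw} to get nested segments $\Delta_1\subseteq\cdots\subseteq\Delta_k$, use the embedding $L(\gotM)\hookrightarrow\tilde M(\gotM)$ together with Frobenius reciprocity to exhibit $L(\Delta_1+\cdots+\Delta_k)$ as a Jacquet module component, and then peel off a Kato module from that generic piece via \eqref{jac-seg}. The one step you left vague---``pick the enumeration so that the $k$ nested segments are induced last (or first)''---is precisely what the paper makes explicit: it splits the remaining segments of $\gotM$ according to whether they lie $\preceq'$-above some $\Delta_i$ (the set $S$), puts those above on one side and the rest on the other, and then checks the non-precedence condition $\Delta\nprec\Delta'$ needed for the co-standard factorization $\tilde M(\gotM)\cong\tilde M(\gotM_2)\times L(\Delta_1)\times\cdots\times L(\Delta_k)\times\tilde M(\gotM_1)$ to hold.
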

\begin{proof}
From exactness of the Jacquet functor, it suffices to prove the statement for $\pi\in\irr_{\langle \rho \rangle}$.

Let $\pi\in \irr_{\langle \rho \rangle}$ be given. We write $\pi = L(\gotM)$ and $k=\wid(\pi)$. By Lemma \ref{dilw}, there are non-trivial segments $\Delta_1+\ldots+ \Delta_{k}\leq \gotM$ for which $\Delta_1\subseteq \ldots\subseteq \Delta_{k}$.
We write
\[
S = \left\{\Delta\in\seg_{\langle \rho \rangle}\;:\; \Delta_i\preceq' \Delta\;\,\mbox{for some }1\leq i\leq k \right\}.
\]
Let us define the following multisegments:
\[\gotM_1 = \mathbbm{1}_S\cdot\left(\gotM-(\Delta_1+\ldots + \Delta_{k})\right),\]
\[\gotM_2= (1-\mathbbm{1}_S)\cdot\left(\gotM-(\Delta_1+\ldots + \Delta_{k})\right)= \gotM - \gotM_1 - (\Delta_1+\ldots + \Delta_{k}).\]

We claim that the co-standard module satisfies
\[\widehat{M}(\gotM)\cong  \widehat{M}(\gotM_2)\times L(\Delta_1)\times \cdots \times L(\Delta_{k})\times \widehat{M}(\gotM_1).\]
It is enough to check that $\Delta \nprec \Delta'$, for all $\Delta\in \underline{\gotM-\gotM_2}$ and all $\Delta'\in \underline{\gotM-\gotM_1}$.

Assume the contrary for certain $\Delta,\Delta'$. Then $\Delta_{i_0}\preceq' \Delta$ for a certain $i_0$ and from the transitivity of $\preceq'$ we get $\Delta_{i_0}\preceq' \Delta'$. Hence, $\Delta'\not\in \underline{\gotM_2}$ and we must have $\Delta' = \Delta_j$ for some $j$. But, from $\Delta_{i_0}\preceq' \Delta \prec \Delta'= \Delta_j$ we get $\Delta = \Delta' = \Delta_{i_0}$, which is a contradiction to the non-reflexivity of $\prec$.

Since $\pi$ is embedded in $\widehat{M}(\gotM)$, from Frobenius reciprocity we see that the representation $\pi':=L(\Delta_1+\ldots +\Delta_{k})\cong L(\Delta_1)\times \cdots \times L(\Delta_{k})$ is a Jacquet module component of $\pi$.

Let us write $\Delta_i = [a_i,b_i]$ for all $1\leq i\leq k$. Then, $a_1\leq \ldots\leq a_{k}$, $b_1\geq \ldots \geq b_{k}$ and $a_k\leq b_k$. From the formula for Jacquet modules of segment representations and Proposition \ref{prop-geom}, we know that
\[
\mathbf{r}_\alpha(\pi') = L\left(\sum_{i=1}^{k}[a_{k},b_i]\right) \otimes L\left( \sum_{i=1}^{k} [a_i,a_{k}-1]\right),
\]
where $P_\alpha$ is the appropriate parabolic subgroup.

Thus, $\pi''=  L\left(\sum_{i=1}^{k}[a_{k},b_i]\right)$ is a Jacquet module component of $\pi'$. Using again the arguments from the previous paragraph, we see that $K(\rho, a_k,k)$ is a Jacquet module component of $\pi''$. Since the relation of being a Jacquet module component is clearly transitive, $K(\rho,a_k,k)$ becomes a Jacquet module component of $\pi$.

Therefore, $\wid(\pi) = k\leq \kappa(\pi)$.

\end{proof}

For $\sigma\in \irr_{\langle \rho \rangle}$, let $b(\sigma)\in \mathbb{Z}_{\langle \rho \rangle}\cong \mathbb{Z}$ be the minimal element in $\underline{\supp(\sigma)}$. We write $B(\sigma)=\supp(\sigma)(b(\sigma))$ for the multiplicity of $b(\sigma)$ in $\supp(\sigma)$.

For $\pi\in \mathfrak{R}(G_n)$  with $[\pi]\in \mathcal{R}_{\langle \rho \rangle}$, we write
\[
j(\pi) = \max \;\{\; B(\pi')\;:\; \pi'\in \irr\mbox{ is a Jacquet module component of }\pi\}.
\]

Since $B(K(\rho,k,m))=m$, we always have $\kappa(\pi)\leq j(\pi)$.

\begin{proposition}\label{main-prop}
Let $\pi_1,\ldots, \pi_k\in \irr_{\langle \rho \rangle}$ be ladder representations.

Then $\wid(\pi_1\times\cdots\times \pi_k)\leq k$.
\end{proposition}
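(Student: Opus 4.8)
\emph{Proof strategy.} The plan is to reduce everything to the invariant $j$. By Lemma \ref{first-ineq} we have $\wid(\pi_1\times\cdots\times\pi_k)\le\kappa(\pi_1\times\cdots\times\pi_k)$, and by the observation recorded just before the proposition (using $B(K(\rho,k,m))=m$) we have $\kappa\le j$ for every such representation. Hence it suffices to prove the sharper statement $j(\pi_1\times\cdots\times\pi_k)\le k$; that is, I must bound $B(\pi')$ by $k$ for every Jacquet module component $\pi'$ of $\pi_1\times\cdots\times\pi_k$.

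First I would apply the variant of the Geometric Lemma stated above, which says that any Jacquet module component $\pi'$ of $\pi_1\times\cdots\times\pi_k$ satisfies $m(\pi',\kappa_1\times\cdots\times\kappa_k)>0$ for suitable Jacquet module components $\kappa_i$ of the individual $\pi_i$. Since each $\pi_i$ is a ladder representation, I would then invoke the description of Jacquet modules of ladders from \cite{LapidKret} (recalled in Section \ref{sect-wid}), by which every $\kappa_i$ is again a ladder representation (discarding the harmless case $\kappa_i=L(\emptyset)$, the trivial representation of $G_0$). The elementary point that makes the count work is this: for a nontrivial ladder $\kappa=L([a_1,b_1]+\cdots+[a_m,b_m])$ with $a_1<\cdots<a_m$, the minimal element of $\underline{\supp(\kappa)}$ is $a_1$, and $a_1$ lies only in the segment $[a_1,b_1]$ (for $i\ge 2$ one has $a_i>a_1$), so $B(\kappa)=\supp(\kappa)(a_1)=1$.

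To finish I would use additivity of the supercuspidal support: $\supp(\pi')=\sum_{i=1}^k\supp(\kappa_i)$, so $b(\pi')=\min_i b(\kappa_i)$ (the minimum over the nontrivial $\kappa_i$) and
\[
B(\pi')=\sum_{i\,:\,b(\kappa_i)=b(\pi')}B(\kappa_i)\le k,
\]
since there are at most $k$ summands, each equal to $1$. Therefore $j(\pi_1\times\cdots\times\pi_k)\le k$, and combining with $\wid\le\kappa\le j$ yields the proposition.

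\emph{Where the difficulty lies.} There is no serious technical obstacle once the invariant $j$ is in place; the whole argument hinges on one subtlety. It would be false to try to bound an arbitrary multiplicity of $\supp(\kappa_i)$ by $1$ — a ladder can have large support multiplicities (e.g. $L([0,2]+[1,3])$) — and the overlaps among the $\kappa_i$ would then not be controlled. It is precisely because $\kappa$ and $j$ are defined through the \emph{minimal} point of the support, where a ladder always has multiplicity $1$, that the bound comes out to exactly $k$. The essential external input is the fact from \cite{LapidKret} that Jacquet module components of ladders remain ladders; without it one would have to analyze the multisegment combinatorics of the subquotients of $\kappa_1\times\cdots\times\kappa_k$ directly.
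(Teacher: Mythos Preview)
Your proof is correct and follows essentially the same route as the paper's own argument: bound $j(\pi_1\times\cdots\times\pi_k)$ by $k$ using the Geometric Lemma and the fact from \cite{LapidKret} that Jacquet module components of ladders are ladders (hence have $B=1$), then invoke $\wid\le\kappa\le j$ via Lemma~\ref{first-ineq} and the remark preceding the proposition. Your write-up simply spells out in more detail the step the paper summarizes as ``It easily follows that $B(\sigma)=B(\sigma_1\times\cdots\times\sigma_k)\le k$.''
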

\begin{proof}
By Proposition \ref{prop-add}, a Jacquet module component $\sigma$ of $\pi_1\times\cdots\times \pi_k$ is a subquotient of $\sigma_1\times\cdots\times \sigma_k$, where each $\sigma_i$ is a Jacquet module component of $\pi_i$. It follows from the main result of \cite{LapidKret}, that every such $\sigma_i$ must be a ladder representation, which means that $B(\sigma_i)=1$. It easily follows that $B(\sigma) = B(\sigma_1\times\cdots\times \sigma_k)\leq k$. Hence, $j(\pi_1\times\cdots\times \pi_k)\leq k$ and the result follows from Lemma \ref{first-ineq}.

\end{proof}

\begin{proposition}\label{width-equiv}
For every $\pi\in\mathfrak{R}(G_n)$ with $[\pi]\in \mathcal{R}_{\langle \rho \rangle}$,
\[
\wid(\pi) = \kappa(\pi) = j(\pi).
\]
\end{proposition}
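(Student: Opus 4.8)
The plan is to close up a cycle of inequalities among the three invariants. Lemma~\ref{first-ineq} already supplies $\wid(\pi)\le\kappa(\pi)$, and the observation preceding Proposition~\ref{main-prop} supplies $\kappa(\pi)\le j(\pi)$: if $K(\rho,k,m)$ is a Jacquet module component of $\pi$ realizing $\kappa(\pi)=m$, then $B(K(\rho,k,m))=m$ forces $j(\pi)\ge m$. Thus the only remaining content is the inequality $j(\pi)\le\wid(\pi)$.

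First I would reduce to $\pi$ irreducible. Since each Jacquet functor $\mathbf{r}_\alpha$ is exact, $[\mathbf{r}_\alpha(\pi)]=\sum_\tau m(\tau,\pi)\,[\mathbf{r}_\alpha(\tau)]$, with $\tau$ ranging over the irreducible subquotients of $\pi$; hence every Jacquet module component of $\pi$ is already a Jacquet module component of some irreducible subquotient $\tau$ of $\pi$, and therefore $j(\pi)=\max_\tau j(\tau)$. As $\wid(\pi)=\max_\tau\wid(\tau)$ by definition, it is enough to prove $j(L(\gotM))\le\wid(\gotM)$ for a single multisegment $\gotM\in\mathbb{N}(\seg_{\langle\rho\rangle})$.

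For that, set $k=\wid(\gotM)$ and write $\gotM=\gotM_1+\dots+\gotM_k$ with all $\gotM_i$ ladder multisegments. Iterating Remark~\ref{rmk} shows that $L(\gotM)$ occurs as a subquotient of $\Pi:=L(\gotM_1)\times\dots\times L(\gotM_k)$, so by exactness of the Jacquet functors every Jacquet module component of $L(\gotM)$ is a Jacquet module component of $\Pi$, which gives $j(L(\gotM))\le j(\Pi)$. But $L(\gotM_1),\dots,L(\gotM_k)$ are ladder representations, and the argument already carried out in the proof of Proposition~\ref{main-prop} — a Jacquet module component of $\Pi$ is a subquotient of $\kappa_1\times\dots\times\kappa_k$ with $\kappa_i$ a Jacquet module component of $L(\gotM_i)$, hence itself a ladder representation by \cite{LapidKret}, hence with $B(\kappa_i)=1$, so $B(\kappa_1\times\dots\times\kappa_k)\le k$ — yields $j(\Pi)\le k$. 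Combining, $j(L(\gotM))\le k=\wid(\gotM)$, and the chain $\wid(\pi)\le\kappa(\pi)\le j(\pi)\le\wid(\pi)$ closes.

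I do not expect a serious obstacle; the proposition is essentially a bookkeeping synthesis of Lemma~\ref{first-ineq}, Remark~\ref{rmk}, Proposition~\ref{main-prop}, and the Lapid--Kret description of Jacquet modules of ladder representations. The one point deserving care is the monotonicity of $j$ used twice above: one must verify that if $\sigma$ is a subquotient of $\pi$ then every Jacquet module component of $\sigma$ is one of $\pi$ (so $j(\sigma)\le j(\pi)$), and that $B$ of any subquotient of an induced representation depends only on the common supercuspidal support. Both facts follow at once from exactness of $\mathbf{r}_\alpha$ and the computation of supercuspidal supports from the Langlands data, but they should be spelled out explicitly.
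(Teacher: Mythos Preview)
Your proposal is correct and follows essentially the same argument as the paper: reduce to irreducible $\pi$, write $\pi=L(\gotM_1+\dots+\gotM_k)$ with $k=\wid(\pi)$ and $\gotM_i$ ladder, embed $\pi$ as a subquotient of $L(\gotM_1)\times\dots\times L(\gotM_k)$, and then invoke the proof of Proposition~\ref{main-prop} together with exactness to bound $j(\pi)\le k$, closing the cycle with Lemma~\ref{first-ineq} and the inequality $\kappa\le j$. Your write-up is simply more explicit about the reduction steps and the monotonicity of $j$ than the paper's terse version.
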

\begin{proof}
It is enough to consider $\pi\in \irr$. By definition we can write $\pi = L(\gotM_1+\ldots+\gotM_k)$, where $k=\wid(\pi)$ and $\pi_i = L(\gotM_i)$ are ladder representations, for all $1\leq i\leq k$. Thus, $m(\pi,\:\pi_1\times\cdots\times \pi_k)>0$.
From exactness of the Jacquet functor and the proof of Proposition \ref{main-prop}, we get $j(\pi)\leq j(\pi_1\times\cdots\times \pi_k)\leq k$. Combining with Lemma \ref{first-ineq}, the statement now follows.

\end{proof}

Theorem \ref{main-1} now follows from the next corollary.

\begin{corollary}\label{cor-wid}
For all $\pi_1\in\mathfrak{R}(G_{n_1}),\;\pi_2\in \mathfrak{R}(G_{n_2})$ with $[\pi_1],[\pi_2]\in \mathcal{R}_{\langle \rho \rangle}$,
\[
\wid(\pi_1\times\pi_2)\leq \wid(\pi_1)+\wid(\pi_2).
\]
\end{corollary}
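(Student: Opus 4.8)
The plan is to reduce to the case where $\pi_1$ and $\pi_2$ are irreducible, and then to realize each of them as a subquotient of a product of ladder representations so that Proposition \ref{main-prop} applies directly.

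First I would dispose of the reduction to the irreducible case. In $\mathcal{R}$ we have
$[\pi_1\times\pi_2]=[\pi_1]\cdot[\pi_2]=\sum_{\sigma_1,\sigma_2}m(\sigma_1,\pi_1)\,m(\sigma_2,\pi_2)\,[\sigma_1\times\sigma_2]$,
a sum with nonnegative coefficients ranging over the irreducible subquotients $\sigma_i$ of $\pi_i$. Hence any $\sigma\in\irr$ with $m(\sigma,\pi_1\times\pi_2)>0$ satisfies $m(\sigma,\sigma_1\times\sigma_2)>0$ for some such $\sigma_1,\sigma_2$, so that $\wid(\sigma)\le\wid(\sigma_1\times\sigma_2)$. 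Since $\wid(\sigma_i)\le\wid(\pi_i)$ by the very definition of $\wid$ on reducible representations (as a maximum over irreducible subquotients), the general inequality follows once it is known for irreducible representations.

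Next I would assume $\pi_1=L(\gotM_1)$ and $\pi_2=L(\gotM_2)$ are irreducible and set $k_i=\wid(\pi_i)$. By definition of the width of a multisegment, write $\gotM_i=\gotM_i^1+\cdots+\gotM_i^{k_i}$ with each $\gotM_i^j$ a ladder multisegment. I claim $\pi_i$ occurs as a subquotient of $L(\gotM_i^1)\times\cdots\times L(\gotM_i^{k_i})$, a product of ladder representations. This follows from Remark \ref{rmk} by induction on $k_i$: one has $m\bigl(L(\gotM_i),\,L(\gotM_i^1)\times L(\gotM_i^2+\cdots+\gotM_i^{k_i})\bigr)=1$, and $L(\gotM_i^2+\cdots+\gotM_i^{k_i})$ occurs inside $L(\gotM_i^2)\times\cdots\times L(\gotM_i^{k_i})$ with nonnegative Grothendieck-group coefficients, so no cancellation can destroy the occurrence of $L(\gotM_i)$ in the full product.

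Consequently $\pi_1\times\pi_2$ is a subquotient of $L(\gotM_1^1)\times\cdots\times L(\gotM_1^{k_1})\times L(\gotM_2^1)\times\cdots\times L(\gotM_2^{k_2})$, which is a product of $k_1+k_2$ ladder representations. By Proposition \ref{main-prop} the width of this product is at most $k_1+k_2$, and since every irreducible subquotient of $\pi_1\times\pi_2$ is also an irreducible subquotient of that product, we get $\wid(\pi_1\times\pi_2)\le k_1+k_2=\wid(\pi_1)+\wid(\pi_2)$. I do not expect a real obstacle here: the only delicate points are the monotonicity of $\wid$ under passing to subquotients (immediate from its definition) and the non-cancellation remark used to place $\pi_i$ inside a product of ladders; all of the substantive content has already been established in Proposition \ref{main-prop}.
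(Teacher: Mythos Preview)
Your proposal is correct and follows essentially the same route as the paper: reduce to irreducible $\pi_1,\pi_2$ by exactness of parabolic induction, write each as $L$ of a sum of ladder multisegments, invoke Remark~\ref{rmk} to embed each $\pi_i$ as a subquotient of a product of ladders, and finish with Proposition~\ref{main-prop}. The paper's version is terser (it applies Remark~\ref{rmk} in one step rather than your inductive justification), but the argument is the same.
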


\begin{proof}
By exactness of parabolic induction it is enough to assume that $\pi_1,\pi_2\in \irr_{\langle \rho \rangle}$. Suppose that $\pi_1 = L(\gotM_1+\ldots + \gotM_s)$ and $\pi_2=L(\gotM_{s+1}+\ldots+\gotM_t)$ for ladder representations $\tau_i=L(\gotM_i)$, where $1\leq i\leq t$ and $s=\wid(\pi_1),\: t-s = \wid(\pi_2)$. Then from Remark \ref{rmk} it follows that $m(\pi_1,\, \tau_1\times \cdots\times \tau_s)>0$ and $m(\pi_2,\,\tau_{s+1}\times\cdots\times \tau_t)>0$. Thus, again by exactness of parabolic induction $\pi_1\times\pi_2$ appears as a subquotient of $(\tau_1\times\cdots\times \tau_s)\times (\tau_{s+1}\times \cdots\times \tau_t)$. The statement now follows from Proposition \ref{main-prop}.

\end{proof}

\begin{remark}
The above sub-multiplicativity property can also be easily shown using the $\kappa$ characterization of the width invariant.
\end{remark}

Curiously, Proposition \ref{width-equiv} can also be used to recover \cite[Theorem 4.1.(c)]{ram} by using very different tools from the original proof.
\begin{corollary}\label{cor-ram}
For any $\pi\in \irr_{\langle \rho \rangle}\cap \irr(G_n)$, there is a standard Levi subgroup $M_\alpha< G_n$, for which the length of the socle filtration of $\mathbf{r}_\alpha(\pi)$ is at least $\omega(\pi)$.

In particular, the collection of ladder representations in $\irr_{\langle \rho \rangle}$ consists of precisely those representations in $\irr_{\langle \rho \rangle}$ whose Jacquet modules are all completely reducible.

\end{corollary}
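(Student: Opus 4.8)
The plan is to use the characterization $\wid = \kappa$ from Proposition \ref{width-equiv} to reduce the statement to a fact about Jacquet module components and Kato modules. For the first assertion, let $\pi \in \irr_{\langle \rho \rangle} \cap \irr(G_n)$ and set $m = \wid(\pi) = \kappa(\pi)$. By definition of $\kappa$, there is a standard Levi $M_\alpha < G_n$ and a representation $\sigma = \sigma_1 \otimes \cdots \otimes \sigma_r \in \irr(M_\alpha)$ occurring in $\mathbf{r}_\alpha(\pi)$ with some $\sigma_{i_0} \cong K(\rho, k, m)$. First I would refine $\alpha$ so that the Kato module appears as the \emph{leftmost} (or some fixed) tensor factor: using transitivity of Jacquet functors and the Geometric Lemma (Proposition \ref{prop-geom}), one can pass to a coarser or finer Levi $M_\beta$ so that $K(\rho,k,m)$ appears as a Jacquet module component sitting in one designated block, say $\mathbf{r}_\beta(\pi)$ has $K(\rho,k,m) \otimes (\cdots)$ as a subquotient. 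Then I invoke the result of \cite{groj-vaz} recalled in the excerpt: the socle filtration of $\mathbf{r}_{((m-1)d,d)}(K(\rho,k,m))$ (with $\rho \in \mathcal{C}(G_d)$) has length exactly $m$. Applying the corresponding Jacquet functor on the left block and using exactness of $\mathbf{r}$ together with the fact that $\mathbf{r}$ of a subquotient is a subquotient, the resulting module $\mathbf{r}_\gamma(\pi)$ for the composite Levi $M_\gamma$ has a subquotient whose socle filtration has length $\geq m = \wid(\pi)$, hence its own socle filtration has length $\geq m$ as well.

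For the second assertion — that ladder representations are exactly those $\pi \in \irr_{\langle \rho \rangle}$ with all Jacquet modules completely reducible — the forward direction is the result of \cite{LapidKret} (or \cite{ram}) already cited: all Jacquet module components of a ladder representation are ladder, and more precisely all Jacquet modules are completely reducible, which is the content being re-derived. The interesting direction follows immediately from the first part: if $\pi$ is not a ladder representation then $\wid(\pi) \geq 2$, so by the first assertion some $\mathbf{r}_\alpha(\pi)$ has socle filtration of length $\geq 2$, i.e.\ is \emph{not} semisimple. Contrapositively, if all Jacquet modules of $\pi$ are completely reducible then $\wid(\pi) = 1$, which by the remark following the width definition means $\pi$ is a ladder representation.

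I expect the main obstacle to be the bookkeeping in the first step: one must arrange a single standard Levi $M_\gamma < G_n$ such that $\mathbf{r}_\gamma(\pi)$ itself (not merely some iterated Jacquet module) has a subquotient isomorphic to a Jacquet module of $K(\rho,k,m)$ with long socle filtration, and then transfer the socle-length lower bound from that subquotient up to $\mathbf{r}_\gamma(\pi)$. The transfer direction is the easy one — a subquotient of a module of socle filtration length $\ell$ need not have length-$\ell$ filtration, but here we go the other way: we \emph{embed} the Kato Jacquet module data into $\mathbf{r}_\gamma(\pi)$ via a subquotient, and the key point is that if $N$ is a subquotient of $X$ then the socle filtration length of $X$ is at least that of $N$; this requires a short argument that socle length does not drop under passing to subquotients, which holds in a finite-length abelian category. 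Everything else is a routine application of the Geometric Lemma and transitivity of Jacquet functors, so no genuinely new input beyond Proposition \ref{width-equiv} and the cited \cite{groj-vaz}, \cite{LapidKret} results is needed.
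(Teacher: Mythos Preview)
Your proposal is correct and follows essentially the same line as the paper: use $\wid(\pi)=\kappa(\pi)$ to produce a Kato module as a Jacquet module component, then invoke the \cite{groj-vaz} socle-length result, and deduce the ladder characterization by contraposition together with \cite{LapidKret}. Your write-up is in fact more careful than the paper's, which simply says ``let us write $\mathbf{r}_\alpha(\pi)$ for the related Jacquet module'' and invokes \cite{groj-vaz} without spelling out the refinement step or the subquotient-to-Loewy-length transfer you correctly isolate. One small point: for the forward direction of the second assertion the paper does not merely cite \cite{LapidKret} but notes the specific reason complete reducibility follows, namely that in the Kret--Lapid formula the irreducible subquotients of a given Jacquet module of a ladder all have \emph{distinct} supercuspidal supports, hence there are no nontrivial extensions among them; you may want to record this explicitly rather than treating it as a black box.
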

\begin{proof}
Since $\omega(\pi)= \kappa(\pi)$, we know that the Kato module $K(\rho, k, \omega(\pi))$ appears as a Jacquet module component of $\pi$. Let us write $\mathbf{r}_\alpha(\pi)$ for the related Jacquet module. Then, from the previously mentioned result in \cite{groj-vaz}, the length of the socle filtration of $\mathbf{r}_\alpha(\pi)$ is at least $\omega(\pi)$.

The last part of the statement follows from the first part together with the observation that all irreducible subquotients of Jacquet modules of ladder representations in formula of \cite{LapidKret} have distinct supercuspidal supports. Hence, Jacquet modules of ladder representations are completely reducible.

\end{proof}

\subsection{Patterns}

We say that a permutation $w\in S_n$ is \textit{flattened into} $\tilde{w}\in S_k$, for $k<n$, if there is a subset $I\subset \{1,\ldots,n\}$ of size $n-k$, such that $\tilde{w}$ is obtained by removing the entries $(i,w(i))_{i\in I}$ and keeping the relative order of all other entries.

We say that a permutation $w\in S_n$ \textit{avoids} (the pattern) $x_1\ldots x_k$, for $\{x_1,\ldots,x_k\}= \{1,\ldots,k\}$, if $w$ cannot be flattened into the permutation $x\in S_k$ given by $x(i)= x_i$.

\begin{lemma}\label{lem-patt}
Let $\lambda = (\lambda_i), \mu=(\mu_i)\in \mathcal{P}_m$ be given, so that $\lambda_m\leq \mu_1$.

Then, for all $w\in S(\lambda,\mu)$ and all $1\leq k\leq m-1$, $w$ avoids the pattern $(k+1)k\ldots 21$, if and only if, $\wid(L(\gotM^w_{\lambda,\mu}))\leq k$.
\end{lemma}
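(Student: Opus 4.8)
The plan is to translate the pattern-avoidance condition on $w$ into the combinatorial characterization of width given by Lemma \ref{dilw}, namely the maximal length of a chain $\Delta_1\subseteq\cdots\subseteq\Delta_\ell$ of segments dominated by $\gotM^w_{\lambda,\mu}$. Recall that $\gotM^w_{\lambda,\mu} = \sum_{i=1}^m [\lambda_i,\mu_{w(i)}]$, and because of the hypothesis $\lambda_m\le\mu_1$ every such segment is nonempty (indeed $\lambda_i\le\lambda_m\le\mu_1\le\mu_{w(i)}$), so the multisegment genuinely has $m$ segments and no cancellation occurs. The key observation I would isolate is: for two indices $i\ne j$, the segments $[\lambda_i,\mu_{w(i)}]$ and $[\lambda_j,\mu_{w(j)}]$ satisfy the containment $[\lambda_i,\mu_{w(i)}]\subseteq[\lambda_j,\mu_{w(j)}]$ precisely when $\lambda_j\le\lambda_i$ and $\mu_{w(i)}\le\mu_{w(j)}$.

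Because $w$ is the \emph{longest} representative of its double coset $S^\mu w S^\lambda$, I would next argue that one may reduce to studying the \emph{values} $(\lambda_i,\mu_{w(i)})$ rather than the indices. Concretely, if $\lambda_i=\lambda_j$ then, by maximality of $w$ in $S^\mu wS^\lambda$, the entries $w(i),w(j)$ are arranged so that the larger index gets the larger $\mu$-value; a symmetric statement holds when $\mu_{w(i)}=\mu_{w(j)}$. This lets me convert a chain of $\ell$ mutually containing segments into a strictly decreasing sequence of $\ell$ positions $i$ together with non-decreasing $\mu_{w(i)}$ — i.e.\ after flattening, a subsequence of $w$ that is decreasing in the domain-index and non-increasing after the stabilizer normalization, which is exactly an occurrence of the decreasing pattern $(\ell)(\ell-1)\cdots 21$. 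The precise bookkeeping here — showing that a length-$(k+1)$ chain of segments forces, via the longest-coset-representative condition, a flattening of $w$ to $(k+1)k\cdots 21$, and conversely — is the part that needs care; this is where I expect the main technical obstacle to lie, since one must handle the ``ties'' coming from repeated entries of $\lambda$ and $\mu$ (where a single value of $\gotM^w_{\lambda,\mu}$ can be realized by several index choices) without breaking the correspondence.

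For the converse direction, given an occurrence of $(k+1)k\cdots 21$ in $w$, i.e.\ indices $i_1<\cdots<i_{k+1}$ with $w(i_1)>\cdots>w(i_{k+1})$, I would check that the segments $[\lambda_{i_1},\mu_{w(i_1)}],\ldots,[\lambda_{i_{k+1}},\mu_{w(i_{k+1})}]$ — after possibly replacing the chosen indices by others in the same $\lambda$- and $\mu$-stabilizer orbits, which is permitted since $\gotM^w_{\lambda,\mu}$ only depends on the double coset — form a chain under $\subseteq$: from $i_1<\cdots<i_{k+1}$ one gets $\lambda_{i_1}\le\cdots\le\lambda_{i_{k+1}}$, and from $w(i_1)>\cdots>w(i_{k+1})$ (together with the longest-representative normalization) one gets $\mu_{w(i_1)}\ge\cdots\ge\mu_{w(i_{k+1})}$, so that $[\lambda_{i_{k+1}},\mu_{w(i_{k+1})}]\subseteq\cdots\subseteq[\lambda_{i_1},\mu_{w(i_1)}]$. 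By Lemma \ref{dilw} this yields $\wid(L(\gotM^w_{\lambda,\mu}))\ge k+1$. Combining the two implications gives the equivalence: $w$ avoids $(k+1)k\cdots 21$ iff no such chain of length $k+1$ exists iff $\wid(L(\gotM^w_{\lambda,\mu}))\le k$. I would organize the write-up around a single lemma establishing the dictionary ``chains of containing segments $\leftrightarrow$ decreasing subsequences of $w$'' and then deduce the statement for all $k$ at once.
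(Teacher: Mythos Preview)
Your proposal is correct and follows the same route as the paper: both directions go through Lemma \ref{dilw}, and the ``ties'' you flag are handled precisely by the longest-double-coset-representative condition on $w\in S(\lambda,\mu)$ (the paper compresses this to the phrase ``from the definition of $S(\lambda,\mu)$''). One small simplification: in the pattern $\Rightarrow$ chain direction no normalization is needed, since $w(i_1)>\cdots>w(i_{k+1})$ together with monotonicity of $\mu$ already yields $\mu_{w(i_1)}\ge\cdots\ge\mu_{w(i_{k+1})}$ directly.
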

\begin{proof}
Suppose that $w\in S(\lambda,\mu)$ does not avoid $(k+1)k\ldots 21$. Then, there exist $1\leq i_1<\ldots <i_{k+1}\leq m$ such that $w(i_1)>\ldots >w(i_{k+1})$. Hence, the segments
\begin{equation}\label{incl}
[\lambda_{i_{k+1}}, \mu_{w(i_{k+1})}] \subseteq \ldots \subseteq[\lambda_{i_2}, \mu_{w(i_2)}]\subseteq [\lambda_{i_1}, \mu_{w(i_1)}]
\end{equation}
belong to $\gotM^w_{\lambda,\mu}$. This, means $\wid(L((\gotM^w_{\lambda,\mu}))\geq k+1$ by Lemma \ref{dilw}.

Conversely, suppose there exists a sequence of as in \eqref{incl} for some distinct $1\leq i_1,\ldots, i_{k+1}\leq m$ and $w\in S(\lambda,\mu)$. From the definition of $\mathcal{P}_m$ we can assume $i_1<\ldots< i_{k+1}$. Now, from the definition of $S(\lambda,\mu)$ we can deduce that $w(i_{k+1})> \ldots >w(i_1)$.

\end{proof}

Note, that for $\lambda = (\lambda_1< \ldots < \lambda_{m}), \mu = (\mu_1 <\ldots < \mu_{m}) \in \mathcal{P}_{m}$, the multisegment $\gotM^e_{\lambda,\mu}$ for the identity element $e\in S_m$, is a ladder multisegment. In fact, all ladder multisegments can be written in this form.

\begin{corollary}\label{cor-patt}
Let
\[
\pi_1=L(\gotM^e_{\lambda^1,\mu^1}),\ldots, \pi_k=L(\gotM^e_{\lambda^k,\mu^k})\in \irr_{\langle \rho \rangle}
\]
be ladder representations, for some $\lambda^i = (\lambda^i_1< \ldots < \lambda^i_{m_i}), \mu^i = (\mu^i_1 <\ldots < \mu^i_{m_i}) \in \mathcal{P}_{m_i}$, $i=1,\ldots,k$. Write $\lambda = \sum_{i=1}^k \lambda^i$ and $\mu=\sum_{i=1}^k \mu^i$.

Suppose that $\sum_{i=1}^k \gotM^e_{\lambda^i,\mu^i} = \gotM^x_{\lambda,\mu}$ for some $x\in P\left(\lambda,\mu\right)$. Let
\[
[\pi_1]\times\cdots\times [\pi_k]= \sum_{x\leq w\in S(\lambda,\mu)} m_w \left[L\left(\gotM^w_{\lambda,\mu}\right)\right]
\]
be the decomposition as in Proposition \ref{gen-decomp}.

Then, $m_w= 0$ for all $w\in S(\lambda,\mu)$ which do not avoid $(k+1)k\ldots 21$.


\end{corollary}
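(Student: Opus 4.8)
The plan is to deduce the claim from Proposition \ref{main-prop} together with Lemma \ref{lem-patt}. The only hypothesis of Lemma \ref{lem-patt} that is not automatic for the $\lambda,\mu$ occurring here is $\lambda_m\le\mu_1$, so I would first arrange it by a shift. Fix an integer $s\ge\max(1,\lambda_m-\mu_1)$ and let $\check\pi_i=L(\check\gotM_i)$, where $\check\gotM_i$ is obtained from $\gotM_i$ by replacing every segment $[a,b]$ by $[a,b+s]$. Shifting all right endpoints of a ladder multisegment by a common constant again yields a ladder multisegment, so each $\check\pi_i$ is a ladder representation; moreover $\check\gotM_i=\gotM^e_{\lambda^i,\check\mu^i}$, where $\check\mu^i$ is obtained from $\mu^i$ by adding $s$ to every entry, and since the index partitions of $\{1,\dots,m\}$ underlying the $\ast$-operation are unaffected by a uniform shift, $\sum_i\check\gotM_i=\gotM^x_{\lambda,\check\mu}$ with the same $x$, where $\check\mu=\mu+(s,\dots,s)$ satisfies $S^{\check\mu}=S^\mu$, $S(\lambda,\mu)\subseteq S(\lambda,\check\mu)$ and $\lambda_m\le\check\mu_1$.

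Second, I would note that the shift does not change the decomposition multiplicities: writing $[\check\pi_1]\times\cdots\times[\check\pi_k]=\sum_{x\le w\in S(\lambda,\check\mu)}\check m_w\,[L(\gotM^w_{\lambda,\check\mu})]$, one has $\check m_w=m_w$ for every $w\in S(\lambda,\mu)$. This is the $k$-factor version of Corollary \ref{cor-add}, proved exactly as there: expanding each $[L(\gotM_i)]$ into standard modules via \eqref{kl-pol-inv}, multiplying out with $[M(\gotN_1)]\times\cdots\times[M(\gotN_k)]=[M(\gotN_1+\cdots+\gotN_k)]$, and re-expanding via \eqref{kl-pol}, one sees that each $m_w$ is a fixed combination of the values $P_{\cdot,\cdot}(1)$ and their inverses $c_{\cdot,\cdot}$ depending only on the parabolic subgroups $S^{\lambda^i},S^{\mu^i},S^\lambda,S^\mu$, all of which are unchanged by the shift.

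With this in hand the argument closes quickly. By Proposition \ref{main-prop}, $\wid(\check\pi_1\times\cdots\times\check\pi_k)\le k$, so every composition factor of this product has width at most $k$; in particular $\wid(L(\gotM^w_{\lambda,\check\mu}))\le k$ whenever $\check m_w>0$. Since $\lambda_m\le\check\mu_1$, Lemma \ref{lem-patt} applies and shows that every such $w$ avoids the pattern $(k+1)k\ldots 21$ — here one may assume $1\le k\le m-1$, for if $k=m$ no element of $S_m$ contains that pattern and there is nothing to prove. Taking the contrapositive: if $w\in S(\lambda,\mu)$ does not avoid $(k+1)k\ldots 21$, then $\check m_w=0$, whence $m_w=\check m_w=0$, which is the assertion.

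The bulk of the (modest) effort is in the first two steps — checking that the shift is compatible with the $\gotM^x_{\lambda,\mu}$-presentation and that the multiplicity invariance really is an instance of Corollary \ref{cor-add}. Once $\lambda_m\le\check\mu_1$ is in force there is no genuine obstacle left: the content is simply the width bound of Proposition \ref{main-prop} fed through the dictionary of Lemma \ref{lem-patt} between chains of nested segments in $\gotM^w_{\lambda,\mu}$ and decreasing subsequences of $w$.
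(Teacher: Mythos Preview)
Your proof is correct and follows essentially the same strategy as the paper's own proof: shift all right endpoints by a large constant so that the hypothesis $\lambda_m\le\check\mu_1$ of Lemma~\ref{lem-patt} is met, invoke Proposition~\ref{main-prop} and Lemma~\ref{lem-patt} for the shifted product, and then use the invariance of the decomposition multiplicities under this shift (Corollary~\ref{cor-add}) to transfer the conclusion back. Your phrasing ``the $k$-factor version of Corollary~\ref{cor-add}, proved exactly as there'' is in fact slightly more honest than the paper's ``successive application of Corollary~\ref{cor-add}'', since the corollary is literally stated for two factors and what is really used is the evident extension of its proof (via Theorem~\ref{thm-indp-len}) to an arbitrary number of factors.
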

\begin{proof}
Let $D$ be a fixed integer. We write $\check{\mu}^i = (\mu^i_j+D)_{j=1}^{m_i}\in \mathcal{P}_{m_i}$ for all $1\leq i\leq k$ and $\check{\mu} = \sum_{i=1}^k \check{\mu}^i$. By choosing $D$ to be big enough we can assume that $\lambda^i_{m_i}\leq \mu^j_1 + D$ for all $1\leq i,j\leq k$, which means $Q(\lambda^i, \check{\mu}^i) = S_{m_i}$ for all $i$, and $Q(\lambda,\check{\mu}) = S_{\sum_i m_i}$.

The representations $\check{\pi}_i = L(\gotM^e_{\lambda^i,\check{\mu}^i})$ remain ladder, and $\sum_{i=1}^k \gotM^e_{\lambda^i,\check{\mu}^i} = \gotM^x_{\lambda,\check{\mu}}$. We write
\[
[\check{\pi}_1]\times\cdots\times [\check{\pi}_k]= \sum_{x\leq w\in S(\lambda,\check{\mu})} \check{m}_w \left[L\left(\gotM^w_{\lambda,\check{\mu}}\right)\right]
\]
for the decomposition as in Proposition \ref{gen-decomp}.

By Proposition \ref{main-prop} and Lemma \ref{lem-patt}, for all irreducible subquotients $L\left(\gotM^w_{\lambda,\check{\mu}}\right)$ of $\check{\pi}_1\times\cdots\times \check{\pi}_k$, the permutation $w$ must avoid the pattern $(k+1)k\ldots 21$. Thus, $\check{m}_w = 0$ for all $w\in S(\lambda,\check{\mu})$ which do not avoid this pattern.

Finally, from a successive application of Corollary \ref{cor-add} we see that $m_w = \check{m}_w$, for all $w\in S(\lambda,\mu)$.

\end{proof}

\section{Multiplicity One}\label{sect-mulone}

In this section we will show that the constants $m_w$ appearing in the decomposition of Proposition \ref{gen-decomp} for the case of a product of two ladder representations cannot be greater than $1$. This will be done by relying on the inequality \eqref{maj} and an analysis of the appearances of indicator representations in Jacquet modules.

\begin{proposition}\label{prop-gen}
Let $\pi_1,\ldots,\pi_k\in \irr$ be given and let $\pi\in\irr$ be a given generic representation.

Then,
\[
m(\pi,\:\pi_1\times \cdots\times \pi_k)\leq 1\;.
\]
In case equality above holds, all $\pi_1,\ldots,\pi_k$ must be generic representations.

\end{proposition}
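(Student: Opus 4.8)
The strategy is to reduce the multiplicity bound to a statement about the supercuspidal support, using the characterization of generic representations by their Whittaker models, combined with the combinatorics of parabolic induction. I would work in the generality of the whole ring $\mathcal{R}$, not just $\mathcal{R}_{\langle \rho \rangle}$, since there is no harm in doing so.

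First, recall that an irreducible representation of $G_n$ is generic if and only if it admits a nonzero Whittaker functional for a nondegenerate character of the maximal unipotent subgroup, and that by a theorem of Zelevinski (via Bernstein--Zelevinski derivatives, or via the fact that a generic representation of $G_n$ is $L(\gotM)$ with no two segments in $\gotM$ linked) this happens precisely when the standard module $M(\gotM)$ equals its unique irreducible quotient, i.e. $\tilde M(\gotM) \cong M(\gotM) \cong L(\gotM)$. The crucial input I would use is the \emph{hereditary} behaviour of genericity under parabolic induction: if $\pi = \sigma_1 \times \cdots \times \sigma_k$ with each $\sigma_i$ generic, then $\pi$ is a product of generic representations and hence, by Rodier's theorem on heredity of Whittaker models, contains a unique generic irreducible subquotient, with multiplicity one; moreover, any generic subquotient of an induced representation $\pi_1 \times \cdots \times \pi_k$ forces each $\pi_i$ to be generic (since a Whittaker functional on the induced representation restricts/descends to produce Whittaker functionals on the Jacquet modules of the $\pi_i$, which must therefore themselves be generic — and an irreducible representation with a generic Jacquet module is generic).

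With that setup the two assertions are handled as follows. For the ``equality forces genericity'' clause: if $m(\pi, \pi_1 \times \cdots \times \pi_k) > 0$ with $\pi$ generic, I would pick Jacquet module components realizing $\pi$ as in the Geometric Lemma (Proposition \ref{prop-geom} and the subsequent proposition on Jacquet module components) — writing $\pi$ as a subquotient of $\kappa_1 \times \cdots \times \kappa_k$ with $\kappa_i$ a leftmost/suitable Jacquet module component of $\pi_i$ — and then use that genericity of $\pi$ propagates down to each $\kappa_i$ and hence back up to genericity of each $\pi_i$ (an irreducible rep with some generic Jacquet module is generic). For the multiplicity bound itself: once all the $\pi_i = L(\gotM_i)$ are known (or assumed) generic, we have $[\pi_1] \times \cdots \times [\pi_k] = [L(\gotM_1)] \times \cdots \times [L(\gotM_k)] = [M(\gotM_1 + \cdots + \gotM_k)]$ in $\mathcal{R}$, and by the general fact $P_{w,w}(1) = m(L(\gotM^w), M(\gotM^w)) = 1$ recalled in Section \ref{sect-kl} (equivalently Remark \ref{rmk}), the unique generic constituent $L(\gotM_1 + \cdots + \gotM_k)$ of this standard module occurs with multiplicity exactly one, and it is the only generic one among the constituents $L(\gotM^w)$ since all others have at least one pair of linked segments. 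Hence $m(\pi, \pi_1 \times \cdots \times \pi_k) \le 1$ for any generic $\pi$.

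\textbf{Main obstacle.} The delicate point is the descent of genericity through the Geometric Lemma: I need that a generic subquotient of $\pi_1 \times \cdots \times \pi_k$ actually forces genericity of each factor, and conversely that a product of generics has a \emph{unique} generic constituent. Both are classical (Rodier, Zelevinski), but one has to be careful that the ``semisimplified'' Geometric Lemma used in the excerpt is enough, or else invoke the finer Whittaker-functional version. The cleanest route is probably to bypass Mackey theory entirely for the bound and argue purely inside $\mathcal{R}$: reduce to the identity $[\pi_1]\times\cdots\times[\pi_k] = [M(\gotM)]$, observe $L(\gotM)$ is the only generic constituent and appears once, and separately prove the ``forces genericity'' clause by the Jacquet-component argument above. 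I expect the write-up to hinge on correctly citing the heredity of the Whittaker model under parabolic induction and its converse.
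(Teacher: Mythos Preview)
Your instinct to invoke Rodier's heredity of Whittaker models is exactly right, and that is precisely what the paper does. However, the execution in your proposal has two concrete errors that should be fixed.

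\textbf{First error (the Geometric Lemma detour for ``forces genericity'').} You propose to write the generic subquotient $\pi$ of $\pi_1\times\cdots\times\pi_k$ as a subquotient of some $\kappa_1\times\cdots\times\kappa_k$ with $\kappa_i$ Jacquet module components of $\pi_i$. This does not make sense: $\pi$ is a representation of the full group $G_n$, while any nontrivial Jacquet module component $\kappa_i$ of $\pi_i$ lives on a strictly smaller group, so $\kappa_1\times\cdots\times\kappa_k$ would be a representation of some $G_{n'}$ with $n'<n$ unless every $\kappa_i=\pi_i$. The Geometric Lemma computes Jacquet modules of the product, not a decomposition of its constituents. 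The paper instead argues directly: genericity of the subquotient $\pi$ plus exactness of the Whittaker (twisted Jacquet) functor gives a nonzero Whittaker functional on $\pi_1\times\cdots\times\pi_k$; Rodier's theorem then says $\pi_1\otimes\cdots\otimes\pi_k$ is generic, hence each $\pi_i$ is.

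\textbf{Second error (identifying the generic constituent).} You assert that once each $\pi_i=L(\gotM_i)$ is generic, the unique generic constituent of $[M(\gotM_1+\cdots+\gotM_k)]$ is $L(\gotM_1+\cdots+\gotM_k)$. This is false in general: even if no two segments within a single $\gotM_i$ are linked, segments coming from different $\gotM_i$'s may well be linked, in which case $L(\gotM_1+\cdots+\gotM_k)$ is \emph{not} generic. The generic constituent of the standard module is some other $L(\gotN)$. What is true (and is all you need) is that the space of Whittaker functionals on $\pi_1\times\cdots\times\pi_k$ is one-dimensional by Rodier, so by exactness of the Whittaker functor there is at most one generic subquotient counted with multiplicity. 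This is exactly the paper's argument and avoids naming the generic constituent at all.

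In short: drop the Mackey/Geometric Lemma route entirely (as you yourself suspect in your ``Main obstacle'' paragraph), and run the paper's three-line argument: exactness of the Whittaker functor upward, Rodier to conclude genericity of the factors and one-dimensionality of the Whittaker space, exactness downward for the multiplicity bound.
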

\begin{proof}
Suppose that $m(\pi,\:\pi_1\times \cdots\times\pi_k)>0$.

Recall that since $\pi$ is generic, its space carries a non-zero Whittaker functional. From exactness of the Whittaker functor, we deduce that $\pi_1\times \cdots \times\pi_k$ carries a non-zero Whittaker functional as well. Now, by Rodier's theorem \cite{rodier} $\pi_1\otimes\cdots\otimes \pi_k$ is generic, hence, so are $\pi_1,\ldots,\pi_k$. Moreover, since $\pi_1,\ldots,\pi_k$ are irreducible it follows from the same theorem that the space of Whittaker functionals on $\pi_1\times\cdots \times\pi_k$ is one-dimensional. Again, by exactness this means $\pi$ cannot appear with multiplicity $>1$ in the product.

\end{proof}

\begin{lemma}\label{mul-lem}
Let $a,b,c$ be integers with $a \leq b$ and $a-1\leq c \leq  b$. Fix the representation $\pi =L([a,b]+[a,c])\in \irr_{\langle \rho \rangle}$, and let $\pi_1=L(\gotM_1), \pi_2=L(\gotM_2)\in \irr_{\langle \rho \rangle}$ be two ladder representations for which $m(\pi,\:\pi_1\times \pi_2)>0 $. Then, $m(\pi,\:\pi_1\times \pi_2)=1$.

In case $c=b$, we must have $\pi_1\cong\pi_2\cong L([a,b])$.

When $c<b$, the pair $\{\gotM_1,\gotM_2\}$ must be of the form
\[
\left\{  \sum_{i=0}^t [a_{2i},a_{2i+1}-1] ,\;  [a,c]+\sum_{i=1}^{s} [a_{2i-1},a_{2i}-1]   \right\},
\]
for some $a=a_0,\; c+1<a_1<\ldots<a_l=b+1$, with either $l=2t+1=2s+1$ or $l=2s= 2t+2$.

\end{lemma}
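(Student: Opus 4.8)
The plan is to exploit inequality \eqref{maj} applied to the indicator representation $\pi_\otimes$. Here $\pi = L([a,b]+[a,c])$ with $a\le b$, $a-1\le c\le b$, so both segments begin at $a$ and $B_\gotM=\{a\}$ is a single point. Hence the Levi $M_{\alpha_\pi}$ is just $G_n$ itself, $\alpha_\pi$ is trivial, and the indicator is $\pi_\otimes = L([a,b]+[a,c])=\pi$ itself (it is already generic since the two segments share an endpoint and so are unlinked). In particular \eqref{maj} reads $m(\pi,\sigma)\le m(\pi,\sigma)$, which is useless directly; the real content is that I must instead split off the \emph{second} block. Concretely I will use the refined mechanism of Proposition \ref{prop-count-mul}: writing $\gotM=[a,b]+[a,c]$, the set $S_1$ consists of all segments beginning at $a$, so $\gotM\cdot\mathbbm 1_{S_1}=\gotM$ and $\pi'$ is trivial. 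This shows the indicator machinery degenerates, so I instead want to analyze $m(\pi,\pi_1\times\pi_2)$ through a Jacquet module \emph{other} than $\mathbf r_{\alpha_\pi}$ — namely I will take the maximal parabolic cutting after the first column, i.e.\ look at $\mathbf r_{(1,n-1)}$.

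The key steps, in order. First, I will observe that the candidate $\pi=L([a,b]+[a,c])$ has, up to the leftmost factor, a very simple leftmost Jacquet module component: by the single-segment formula \eqref{jac-seg} applied inside the co-standard module $\tilde M(\gotM)\cong L([a,c])\times L([a,b])$ (valid since $[a,b]\not\prec[a,c]$ as they share the left endpoint), the representation $\rho\nu^a$ splits off on the left, leaving $L([a+1,b]+[a+1,c])$ (or $L([a+1,b])$ when $c=a-1$, a degenerate case). So $\rho\nu^a\otimes L([a+1,b]+[a+1,c])$ is a Jacquet module component of $\pi$, in fact a quotient of $\mathbf r_{(1,n-1)}(\pi)$, and crucially $B(\pi)=2$ is the maximal $B$-value, matching $\wid(\pi)=2$. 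Second, I will run Proposition \ref{prop-count-mul}/Proposition \ref{prop-geom} for $\mathbf r_{(1,n-1)}(\pi_1\times\pi_2)$: by the Geometric Lemma this is a sum over $\mathcal J(\pi_1,\pi_2)$ of terms $\tau^s\otimes\delta^s$ where $\tau^s=\tau_1^{j_1}\times\tau_2^{j_2}$ lives on $G_1$, forcing exactly one of $\tau_1^{j_1},\tau_2^{j_2}$ to be $\rho\nu^a$ and the other trivial, with $a=b(\sigma_i)$ forcing the relevant $\pi_i$ to have a segment starting at $a$. Since $\pi_1,\pi_2$ are ladder, by \cite{LapidKret} their Jacquet modules are multiplicity-free and each $\delta_i^{j_i}$ is again a ladder representation with $B=1$; hence $m(\rho\nu^a,\tau^s)\le 1$ always, and the count reduces to $\sum_s m(L([a+1,b]+[a+1,c]),\delta^s)$ over a restricted index set. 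Third, I will induct on $b-a$: the residual multisegment $[a+1,b]+[a+1,c]$ is of exactly the same shape, and the ladder factors $\delta^s$ are again ladders, so the inductive hypothesis gives multiplicity $\le 1$ for each surviving $s$; the final point is to show that at most one $s$ survives, i.e.\ the index set of the sum is a singleton — this is where I will need a careful combinatorial argument using that $\pi_1,\pi_2$ are ladders (so each has at most one Jacquet component of the correct supercuspidal support shape at each stage), ruling out two distinct $s$ contributing.

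Finally, the structural description of the pair $\{\gotM_1,\gotM_2\}$ when $c<b$ will come out of running this induction and recording which ladder shapes can appear. The mechanism: at each stage we peel $\rho\nu^{a_0}$ (where $a_0=a,a+1,\dots$) off one of the two ladders; the ladder that currently ``owns'' the segment ending earliest must be the one contributing the tail $[a,c]$, and the place where $c+1$ first becomes a left endpoint marks the switch from one ladder to the other. Tracking this bookkeeping produces the alternating pattern $[a_{2i},a_{2i+1}-1]$ versus $[a,c]+\sum[a_{2i-1},a_{2i}-1]$ with $a=a_0<c+1<a_1<\dots<a_l=b+1$, the parity condition $l=2t+1=2s+1$ or $l=2s=2t+2$ recording whether the two ladders ``end together'' or ``interleave.'' The case $c=b$ is the base/degenerate case: then $\gotM=2[a,b]$, and since $\pi_1\times\pi_2$ must contain $L(2[a,b])=K$-type-adjacent data, support-counting forces $\supp(\pi_1)=\supp(\pi_2)=[a,b]$, hence $\pi_1\cong\pi_2\cong L([a,b])$.

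\textbf{Main obstacle.} The hard part will be the singleton claim in step three — showing that across all of $\mathcal J(\pi_1,\pi_2)$ only one index $s$ contributes a nonzero term to $m(\pi_\otimes,\mathbf r(\pi_1\times\pi_2))$ — together with the precise bookkeeping that converts the inductive peeling into the stated alternating form. Everything else is a routine application of the Geometric Lemma plus the Kret--Lapid Jacquet module formula; it is the uniqueness/rigidity of the ladder decomposition at each stage, and its compatibility with the parity constraint on $l$, that requires genuine care.
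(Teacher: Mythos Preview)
Your plan has a concrete error at step one. You claim that $\rho\nu^a\otimes L([a+1,b]+[a+1,c])$ is a subquotient of $\mathbf r_{(1,n-1)}(\pi)$, but when $c\ge a$ this is impossible by supercuspidal support: $\supp(\pi)$ contains $\rho\nu^a$ with multiplicity $2$, whereas $\rho\nu^a\otimes L([a+1,b]+[a+1,c])$ accounts for it only once. Consequently every $\delta^s$ in your step two still carries $\rho\nu^a$ in its support, so $m(L([a+1,b]+[a+1,c]),\delta^s)=0$ identically and your ``reduced count'' is vacuously zero, proving nothing. To make an inductive peeling work you would have to peel off the Kato module $K(\rho,a,2)=\rho\nu^a\times\rho\nu^a$ via $\mathbf r_{(2d,n-2d)}$ (with $\rho\in\mathcal C(G_d)$), and then the singleton analysis of $\mathcal J(\pi_1,\pi_2)$ becomes more delicate than you indicate.

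More importantly, you are overlooking the one-line observation that makes the lemma almost immediate: the segments $[a,b]$ and $[a,c]$ share the left endpoint $a$, hence are unlinked, hence $\pi=L([a,b]+[a,c])$ is \emph{generic}. The paper then simply invokes Proposition~\ref{prop-gen}: uniqueness of Whittaker functionals (Rodier) gives $m(\pi,\pi_1\times\pi_2)\le 1$ directly, and simultaneously forces $\pi_1,\pi_2$ to be generic. A generic ladder is a ladder whose segments are pairwise unlinked, i.e.\ a disjoint union of intervals with gaps; writing each $\gotM_i$ in that form and matching $\supp(\pi_1)+\supp(\pi_2)=\supp(\pi)$ (noting that $\rho\nu^j$ occurs with multiplicity $2$ for $a\le j\le c$ and multiplicity $1$ for $c+1\le j\le b$) yields the stated alternating description with no induction at all. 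Your indicator/Jacquet machinery is the right tool for Proposition~\ref{mult-one}, but for this lemma it is both heavier and, as written, broken; the generic/Whittaker route is what you want.
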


\begin{proof}
Since the pair of segments $\{[a,b], [a,c]\}$ is not linked, $\pi$ is a generic representation. By Proposition \ref{prop-gen}, multiplicity one follows together with the fact that $\pi_1,\pi_2$ are generic. Since $\gotM_i,\,i=1,2$ are multisegments for which $L(\gotM_i)$ are generic ladder representations, their segments must be pairwise unlinked.

Thus, we can write $\gotM_i = [t^i_0,t^i_1-1] + [t^i_2,t^i_3-1] + \ldots + [t^i_{2k_i},t^i_{2k_i+1}-1]$, for $t^i_0<t^i_1<\ldots <t^i_{2k_i+1},\;i=1,2$.

Note that,
\[
2\supp(L([a,c])) \leq \supp(L([a,c])+\supp(L[a,b]) = \supp(\pi) =
\]
\[= \supp(\pi_1\times \pi_2) = \supp(\pi_1)+\supp(\pi_2)\;.
\]
Clearly it follows that $[a,c]\subseteq [t^1_{2j_1}, t^1_{2j_1+1}-1] $ and $[a,c]\subseteq [t^2_{2j_2}, t^2_{2j_2+1}-1]$ for some $j_1,j_2$.

The rest of the statement easily follows after noting that each of the supercuspidals appearing in $\underline{\supp(L([c+1,b]))}$ must appear only once in $\supp(\pi_1)+\supp(\pi_2)$.

\end{proof}

\begin{proposition}\label{mult-one}
Let $\pi_1=L(\gotM_1),\pi_2=L(\gotM_2)$ be ladder representations in $\irr_{\langle \rho \rangle}$. Suppose that $\pi_1\times\pi_2\in \mathfrak{R}(G_n)$.

For any $\sigma\in \irr(G_n)$, we have $m(\sigma_\otimes,\,\mathbf{r}_{\alpha_\sigma} (\pi_1\times\pi_2))\leq1$.
\end{proposition}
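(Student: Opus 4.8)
The plan is to reduce the multiplicity count $m(\sigma_\otimes, \mathbf{r}_{\alpha_\sigma}(\pi_1\times\pi_2))$ to a sum of products of ``smaller'' multiplicities via Proposition~\ref{prop-count-mul}, and then run an induction on the number of segments of $\sigma$ (equivalently, on the number $k$ of distinct left endpoints in $B_{\gotM}$, where $\sigma=L(\gotM)$). Recall that by definition $\sigma_\otimes = L(\gotM\cdot\mathbbm{1}_{S_1})\otimes\sigma'_\otimes$, where $\sigma'=L(\gotM - \gotM\cdot\mathbbm{1}_{S_1})$ has strictly fewer left endpoints. Proposition~\ref{prop-count-mul} then gives
\[
m(\sigma_\otimes,\,\mathbf{r}_{\alpha_\sigma}(\pi_1\times\pi_2)) = \sum_{s\in \mathcal{J}(\pi_1,\pi_2)} m(L(\gotM\cdot\mathbbm{1}_{S_1}),\,\tau^s)\cdot m(\sigma'_\otimes,\,\mathbf{r}_{\alpha_{\sigma'}}(\delta^s)),
\]
so it suffices to show that at most one index $s$ contributes a non-zero summand, and that this summand equals $1$. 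The base case $k=1$ is exactly the statement that $m(L(\gotM),\,\tau^s)\le 1$ for a generic representation $L(\gotM)=L(\gotM\cdot\mathbbm 1_{S_1})$, which is Proposition~\ref{prop-gen} (note $\gotM\cdot\mathbbm{1}_{S_1}$ has all left endpoints equal, hence its segments are pairwise unlinked and it is generic), combined with the fact that $\tau^s$ ranges over leftmost Jacquet components of $\pi_1\times\pi_2$.

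**Key steps.** First I would recall the description of the Geometric Lemma data for a product of two ladder representations: by the Lapid--Kret formula \cite{LapidKret}, every Jacquet module component $\tau_i^{j_i}$, $\delta_i^{j_i}$ of a ladder representation $\pi_i$ is itself a ladder representation, so each $\tau^s=\tau_1^{j_1}\times\tau_2^{j_2}$ is a product of two ladders and each $\delta^s$ likewise. Next, for the factor $m(L(\gotM\cdot\mathbbm{1}_{S_1}),\,\tau^s)$ to be non-zero, the generic representation $L(\gotM\cdot\mathbbm{1}_{S_1})$ — whose supercuspidal support lives on the single leftmost coordinate $b_1=b(\sigma)$ repeated $B(\sigma)$ times plus longer tails — must embed in $\tau_1^{j_1}\times\tau_2^{j_2}$. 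Comparing supercuspidal supports (exactly as in the proof of Lemma~\ref{mul-lem}) forces the ``cut points'' $j_1,j_2$ of the two ladders to be essentially determined: the leftmost segment-block of each $\tau_i^{j_i}$ must contain the relevant initial intervals, and the multiplicity of the endpoint $b_1$ being $B(\sigma)$ in the combined support pins down how the $B(\sigma)$ copies are split between $\pi_1$ and $\pi_2$. I would carry out this support bookkeeping to conclude that the set of indices $s$ with $m(L(\gotM\cdot\mathbbm{1}_{S_1}),\tau^s)>0$ is a singleton $\{s_0\}$, and that for this $s_0$ the factor equals $1$ by genericity (Proposition~\ref{prop-gen} again). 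Finally, for that unique $s_0$, I would observe that $\delta^{s_0}$ is again a product of two ladder representations and that $\sigma'_\otimes$ is the indicator of $\sigma'=L(\gotM-\gotM\cdot\mathbbm{1}_{S_1})\in\irr_{\langle\rho\rangle}$, so the inductive hypothesis applies to give $m(\sigma'_\otimes,\mathbf{r}_{\alpha_{\sigma'}}(\delta^{s_0}))\le 1$, completing the induction.

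**Main obstacle.** The delicate point is the support-counting argument establishing uniqueness of the contributing index $s_0$. A priori several pairs $(j_1,j_2)$ could produce a $\tau^s$ into which the generic representation $L(\gotM\cdot\mathbbm{1}_{S_1})$ embeds with positive multiplicity, and one must rule this out using only the combinatorics of left endpoints and supercuspidal supports. The key leverage is that $b_1=b(\sigma)$ is strictly smallest among all left endpoints of $\gotM$, so in $\supp(\pi_1)+\supp(\pi_2)$ the coordinate $b_1$ appears exactly $B(\sigma)$ times and any supercuspidal strictly below $b_1$ does not appear at all; this rigidly constrains where the ladder segments of $\pi_1,\pi_2$ can begin, as in the argument at the end of Lemma~\ref{mul-lem}. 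A secondary subtlety is making sure that when we pass from $\sigma$ to $\sigma'$ the inductive hypothesis is genuinely applicable — i.e.\ that $\delta^{s_0}$ is (up to reordering, harmless in $\mathcal{R}$) a product of at most two ladder representations, which follows from the Lapid--Kret description since a Jacquet component of a single ladder is a single ladder. I do not expect the genericity-multiplicity-one input or the reduction via Proposition~\ref{prop-count-mul} to cause difficulty; the entire weight of the proof rests on the endpoint/support combinatorics pinning down $s_0$.
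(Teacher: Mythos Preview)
Your overall architecture matches the paper's proof exactly: induction via Proposition~\ref{prop-count-mul}, the first factor $L(\gotM\cdot\mathbbm 1_{S_1})$ is generic so Proposition~\ref{prop-gen} bounds its multiplicity by~$1$, the Lapid--Kret description makes each $\delta^s$ again a product of two ladders so the induction hypothesis applies, and everything hinges on showing that at most one index $s_0\in\mathcal J(\pi_1,\pi_2)$ contributes.

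The gap is in that last step. ``Support bookkeeping'' in the sense you describe is not enough to pin down $s_0$. Knowing the multiplicity of $b_1$ in $\supp(\pi_1)+\supp(\pi_2)$ tells you which of the two ladders supplies which copy of $b_1$ (this assigns the roles $\rho_1,\rho_2$ of Lemma~\ref{mul-lem}), but it does \emph{not} determine the full cut $s_0$: Lemma~\ref{mul-lem} says any eligible pair $(\tau_1,\tau_2)$ is governed by an interleaving sequence $a=a_0<a_1<\cdots<a_l=b+1$, and a priori many such sequences are compatible with the same total supercuspidal support. The paper's argument here is genuinely combinatorial and goes beyond support counting: one takes a second eligible pair $(\tau'_1,\tau'_2)$ with sequence $(a'_i)$, looks at the minimal index $i_0$ where $a_{i_0}\neq a'_{i_0}$, and derives a contradiction by matching both sequences against the explicit Lapid--Kret form $\tau_t=L(\sum_j[c^t_j,\,e(\Delta^t_j)])$ of a leftmost Jacquet component of the ladder $\pi_t$ (the right endpoints $e(\Delta^t_j)$ are fixed by $\pi_t$, which forces the $a_i$'s to land among them in a monotone way). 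Your sketch conflates two different constraints: the beginnings of segments of $\pi_1,\pi_2$ are already fixed by the given ladders, whereas what must be uniquely determined is the choice of Jacquet cut inside each ladder, i.e.\ the $c^t_j$'s. To complete the proof you need to supply this comparison argument; the rest of your outline is correct and matches the paper.
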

\begin{proof}
Let $\sigma = L(\gotN)\in \irr(G_n)$. We will prove the statement by induction on the number $\# \gotN$ of segments in $\gotN$.

Let $\Delta\in\underline{\gotN}$ be a segment with minimal $b(\Delta)$. Note that all irreducible subquotients of $\pi_1\times\pi_2$ must have the same supercuspidal support, i.e.~$\supp(L(\gotM_1+\gotM_2))$. We may assume that $\supp(\sigma)=\supp(L(\gotM_1+\gotM_2))$, for otherwise the statement is trivially true. Thus, $b(\Delta)$ is also the minimal point in $\underline{\supp(L(\gotM_1+\gotM_2))}$.

Since $\gotM_1,\gotM_2$ are ladders, $\supp(\sigma)(b(\Delta))\leq2$. Hence, if $m(\sigma_\otimes,\,\mathbf{r}_{\alpha_\sigma} (\pi_1\times\pi_2))>0$, then there can be at most one segment $\hat{\Delta}\in\underline{\gotN-\Delta}$ with $b(\hat{\Delta})=b(\Delta)$. In case there is no such segment, let us still write $\hat{\Delta} = [b(\Delta), b(\Delta)-1]$. Moreover, let us always assume that $e(\hat{\Delta})\leq e(\Delta)$.

 We write $\sigma'= L(\gotN -\Delta-\hat{\Delta})$. Then, $\sigma_\otimes = L(\Delta+\hat{\Delta})\otimes \sigma'_\otimes\in \irr(M_{\alpha_\sigma})$.
By Proposition \ref{prop-count-mul}, we have
\[
m(\sigma_\otimes,\,\mathbf{r}_{\alpha_\sigma} (\pi_1\times\pi_2)) = \sum_{(j_1,j_2) \in\mathcal{J}(\pi_1,\pi_2) } m(L(\Delta+\hat{\Delta}),\,\tau_1^{j_1}\times \tau_2^{j_2})\cdot m(\sigma'_\otimes,\,\mathbf{r}_{\alpha_{\sigma'}} (\delta_1^{j_1}\times \delta_2^{j_2})).
\]
where $(j_1,j_2)\in \mathcal{J}(\pi_1,\pi_2)$ indexes all possible irreducible subquotients $\tau_1^{j_1}\otimes \delta_1^{j_1}$ and  $\tau_2^{j_2}\otimes \delta_2^{j_2}$ of the appropriate Jacquet modules of $\pi_1$, $\pi_2$, respectively.

Recall that from the result of \cite{LapidKret} it follows that $\tau_i^j, \delta_i^j$ are all ladder representations. It also follows from this result that $\tau_i^j\cong \tau_i^{j'}$, if and only if, $j=j'$.

By the induction hypothesis it is enough to show that $m(L(\Delta+\hat{\Delta}),\,\tau_1^{j_1}\times \tau_2^{j_2})=0$ for all $(j_1,j_2)$, except for possibly one index $(j_1^0,j_2^0)$, and that $m(L(\Delta+\hat{\Delta}),\,\tau_1^{j_1^0}\times \tau_2^{j_2^0})\leq1$. The latter statement indeed follows from Lemma \ref{mul-lem}.

Suppose that the Jacquet modules of $\pi_1,\pi_2$ have respective irreducible subquotients $\tau_1\otimes \delta_1$,  $\tau_2\otimes \delta_2$, for which $\tau_1\times\tau_2$ contains $L(\Delta+\hat{\Delta})$ as a subquotient. It remains to show the uniqueness of such subquotients.

Note first that if $\Delta=\hat{\Delta}$, then Lemma \ref{mul-lem} implies that $\tau_1\cong\tau_2\cong L(\Delta)$, which gives the wanted conclusion. Thus, we can assume $e(\hat{\Delta})< e(\Delta)$ henceforth.

Lemma \ref{mul-lem} now states that such pair $\{\tau_1,\tau_2\}$ must be of the form $\mathcal{T}=\{\rho_1,\rho_2\}$, where

\[
\rho_1= L\left( \sum_{i=0}^t [a_{2i},a_{2i+1}-1] \right),\quad \rho_2 = L\left( \hat{\Delta}+\sum_{i=1}^s  [a_{2i-1},a_{2i}-1] \right),
\]
for some $a=a_0,\; e(\hat{\Delta})+1 <a_1<\ldots<a_l=b+1$, with either $l=2t+1=2s+1$ or $l=2s= 2t+2$. Here we write $\Delta= [a,b]$.

Let us write $\pi_i = L(\Delta^i_1+\ldots+\Delta^i_k)$ with $e(\Delta^i_{k_i})<\ldots<e(\Delta^i_1)$, for $i=1,2$. Recall from \cite{LapidKret} that $\tau_i$, being a leftmost Jacquet module component, must be expressible in the form $L\left( \sum_{j=1}^{k_i} [c^i_j,e(\Delta^i_j)] \right)$, where $c^i_{k_i}<\ldots<c^i_1$ and $b(\Delta^i_j)\leq c^i_j \leq e(\Delta^i_j)+1$ for all $1\leq j\leq k_i$ and $i=1,2$.

Let us first consider the case that $\hat{\Delta}$ is a non-trivial segment. Then, $\supp(L(\gotM_1+\gotM_2))(b(\Delta))=2$ and $c^1_{k_1} = c^2_{k_2}=b(\Delta)$. Comparing the descriptions of $\mathcal{T}$ with that of $\{\tau_1,\tau_2\}$, we see that $\{e(\Delta^1_{k_1}),e(\Delta^2_{k_2})\} = \{e(\hat{\Delta}), a_1-1\}$. Since $e(\hat{\Delta})<a_1-1$, there is a unique identification between $\mathcal{T}$ and $\{\tau_1,\tau_2\}$. Hence, we can assume without loss of generality that $\tau_1=\rho_1$ and $\tau_2=\rho_2$.

Now, in the other case, that is when $\hat{\Delta}$ is the trivial segment, $\supp(L(\gotM_1+\gotM_2))(b(\Delta))=1$. Without loss of generality we can assume that $m(b(\Delta),\,\supp(\pi_1))=1$. Clearly, we again have $\tau_1=\rho_1$ and $\tau_2=\rho_2$.

Assume now there is a different pair $\tau'_1\otimes \delta'_1$,  $\tau'_2\otimes \delta'_2$ of irreducible subquotients of the respective Jacquet modules of $\pi_1,\pi_2$, such that
\[
\tau'_1= L\left( \sum_{i=0}^{t'} [a'_{2i},a'_{2i+1}-1] \right),\quad \tau'_2 = L\left(\hat{\Delta}+ \sum_{i=1}^{s'} [a'_{2i-1},a'_{2i}-1] \right),
\]
with similar assumptions on the indices.

Let $1\leq i_0$ be the minimal index for which $a_{i_0}\neq a'_{i_0}$. We can assume that $a'_{i_0}< a_{i_0}$. Also, without loss of generality we can assume that $i_0$ is odd. Otherwise, $\pi_2$ should replace $\pi_1$ for the rest of the argument.

Note that $a_{i_0}-1 = e(\Delta^1_{j_0})$ and $a_{i_0-1} = c^1_{j_0}$ for some $j_0$. Using a similar reasoning on $\tau'_1$, we see there is an index $j_1$ for which $a'_{i_0}-1 = e(\Delta^1_{j_1})$. Hence, $e(\Delta^1_{j_1})<e(\Delta^1_{j_0})$ and $j_0<j_1$. Therefore,
\[
c^1_{j_1} < c^1_{j_0} = a_{i_0-1} = a'_{i_0-1} < a'_{i_0} = e(\Delta^1_{j_1})+1.
\]
In particular, $[c^1_{j_1},e(\Delta^1_{j_1})]$ is a non-trivial segment in the multisegment defining $\tau_1$, which means $a'_{i_0} = a_{i_1}$ for some odd $i_1<i_0$. But, since $i_1< i_0-1$ we get a contradiction from $a_{i_1}<a_{i_0-1}<a'_{i_0}$.

\end{proof}

\begin{corollary}[Theorem \ref{main-2}]\label{cor-main-2}
Given ladder representations $\pi_1,\pi_2\in \irr$, $m(\sigma,\,\pi_1\times \pi_2)\leq1$ for all $\sigma\in \irr$.
\end{corollary}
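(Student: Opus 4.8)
The plan is to deduce Corollary \ref{cor-main-2} directly from the multiplicity bound \eqref{maj} together with Proposition \ref{mult-one}. First I would reduce to the case where $\pi_1, \pi_2 \in \irr_{\langle \rho \rangle}$ for a common supercuspidal $\rho$: by the discussion in Section \ref{sect-problem}, any product $\pi_1 \times \pi_2$ of ladder representations factors into a product over distinct supercuspidal lines, and since the segments of a ladder multisegment all lie on a single line, each factor $\pi_i$ decomposes along lines as a product of ladder representations (in fact at most one ladder per line), so the multiplicity of an arbitrary $\sigma \in \irr$ in $\pi_1 \times \pi_2$ is a product of multiplicities of the corresponding ladder-by-ladder products on each line; it therefore suffices to bound each such factor by $1$.

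So assume $\pi_1 = L(\gotM_1), \pi_2 = L(\gotM_2) \in \irr_{\langle \rho \rangle}$ are ladder representations and $\pi_1 \times \pi_2 \in \mathfrak{R}(G_n)$. For any $\sigma \in \irr(G_n)$, inequality \eqref{maj}, applied with $\pi = \sigma$ (an irreducible representation, so certainly of finite length) and the finite-length representation $\pi_1 \times \pi_2$ in the role of $\sigma$, gives
\[
m(\sigma,\, \pi_1 \times \pi_2) \leq m(\sigma_\otimes,\, \mathbf{r}_{\alpha_\sigma}(\pi_1 \times \pi_2)).
\]
But Proposition \ref{mult-one} asserts precisely that the right-hand side is at most $1$. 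Hence $m(\sigma, \pi_1 \times \pi_2) \leq 1$, and since $\sigma \in \irr$ was arbitrary (the multiplicity vanishing trivially unless $\sigma \in \irr(G_n)$ and $\supp(\sigma) = \supp(L(\gotM_1 + \gotM_2))$), the claim follows on this line and therefore, by the reduction above, in general.

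There is essentially no obstacle here: the entire content has been pushed into Proposition \ref{mult-one}, whose proof is the delicate inductive argument combining the Geometric Lemma (via Proposition \ref{prop-count-mul}), the Kret--Lapid description of Jacquet modules of ladder representations, and the combinatorial Lemma \ref{mul-lem}. The only point requiring a word of care in assembling the corollary is the line-by-line reduction, where one should note that a ladder multisegment is supported on a single supercuspidal line, so that splitting $\pi_i$ according to supercuspidal lines yields again ladder representations to which Proposition \ref{mult-one} applies, and that multiplicities are multiplicative across lines as recorded in Section \ref{sect-problem}.
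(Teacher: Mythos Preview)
Your proof is correct and follows essentially the same approach as the paper: reduce to a single supercuspidal line (the paper phrases this slightly more directly, noting that each ladder representation already lies in some $\irr_{\langle \rho_i \rangle}$ and that $\pi_1\times\pi_2$ is irreducible when the lines differ), then apply inequality \eqref{maj} together with Proposition \ref{mult-one}.
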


\begin{proof}
Suppose that $\pi_1\in \irr_{\langle\rho_1\rangle}$ and $\pi_2\in \irr_{\langle \rho_2\rangle}$. If $\rho_1\not\in \mathbb{Z}_{\langle \rho_2\rangle}$ then $\pi_1\times \pi_2$ is irreducible and there is nothing to prove. Therefore we are free to assume $\pi_1,\pi_2\in \irr_{\langle \rho \rangle}$.

The statement now follows from Proposition \ref{mult-one} and inequality \eqref{maj}.

\end{proof}

\section{More on Indicator Representations}\label{sect-some-more}

Recall from Section \ref{sect-kl} that for $\lambda,\mu \in \mathcal{P}_m$ and $x,w\in S(\lambda,\mu)$, it is known that $m(L(\gotM^w_{\lambda,\mu}),M(\gotM^x_{\lambda,\mu}))$ is positive, if and only if, $x\leq w$ in the Bruhat order on $S_m$.

Although $m(L(\gotM^w_{\lambda,\mu})_\otimes,\mathbf{r}(M(\gotM^x_{\lambda,\mu})))$ is potentially larger than $m(L(\gotM^w_{\lambda,\mu}),M(\gotM^x_{\lambda,\mu}))$, it will be useful (in particular, for the main result of Section \ref{sect-smooth}) to see that the mentioned non-vanishing condition still holds for multiplicities of indicator representations inside Jacquet modules.

Our consideration will involve a view on standard modules as induced from segment representations. Since there is a simple formula for the Jacquet functor of segment representations, the Jacquet modules of standard modules are more accessible to computations than those of irreducible representations.

\begin{lemma}\label{lem-tech}
Let $\lambda= (\lambda_i),\mu= (\mu_i)\in \mathcal{P}_m$ and $w\in S(\lambda,\mu)$ be given.

Let $\pi = L(\sum_{i=1}^k[\lambda_1,e_i])\in \irr_{\langle \rho \rangle}$ be a representation given by the integers $\lambda_1\leq e_1\leq\ldots\leq e_k$.

Suppose that there is an index $s\in \mathcal{J}(L([\lambda_1,\mu_{w(1)}]),\ldots,L([\lambda_m,\mu_{w(m)}]))$ for which $m(\pi,\tau^s)>0$.

Then, $\lambda_1 = \ldots = \lambda_k$, $e_i = \mu_{j_i}$ for all $1\leq i\leq k$ and some $j_1<\ldots<j_k$, and
\[
[\delta^s] = [M(\gotM^x_{\check{\lambda},\check{\mu}})]\;,
\]
where $\check{\lambda} = (\lambda_{k+1},\ldots,\lambda_m)$, $\check{\mu}=\mu- (\mu_{j_1},\ldots,\mu_{j_k})$, and $x\in S(\check{\lambda},\check{\mu})$.

Moreover, when writing $x$ as a bijection $\underline{x}:\{k+1,\ldots, m\}\to \{1,\ldots,m\}\setminus \{j_1,\ldots,j_k\}$, we have $\underline{x}(i) = w(i)$ for all $i$ such that $\lambda_i>\lambda_1$ and $\mu_{w(i)}>e_k$.

\end{lemma}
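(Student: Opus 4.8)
The statement is essentially a careful bookkeeping exercise on the geometric lemma (Proposition \ref{prop-geom}) applied to the standard module $M(\gotM^w_{\lambda,\mu}) = L([\lambda_m,\mu_{w(m)}])\times\cdots\times L([\lambda_1,\mu_{w(1)}])$, written as a product of segment representations. The index $s\in\mathcal{J}(\ldots)$ encodes, for each factor $L([\lambda_i,\mu_{w(i)}])$, a choice of cut point $c_i$ with $\lambda_i-1\le c_i\le\mu_{w(i)}$ (using the Jacquet formula \eqref{jac-seg}), so that $\tau^s = L(\sum_i[\lambda_i,c_i])$ and $\delta^s = L(\sum_i[c_i+1,\mu_{w(i)}])$. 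The plan is: first deduce the structural constraints on $\tau^s$ from the hypothesis $m(\pi,\tau^s)>0$, then read off from those constraints the explicit form of $\delta^s$ as a standard module, and finally track which original indices survive in the ``remainder'' permutation $x$.

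**Step 1: forcing $\tau^s$ to be the generic representation $\pi$.** Since $\pi = L(\sum_{i=1}^k[\lambda_1,e_i])$ has all its segments sharing the same left endpoint $\lambda_1$, and $\lambda_1$ is the minimal point of $\underline{\supp(\tau^s)}$ only if enough segments in the multisegment of $\tau^s$ start at $\lambda_1$, I would compare supercuspidal supports: $\supp(\pi)(\lambda_1) = k$ forces at least $k$ of the segments $[\lambda_i,c_i]$ to have $\lambda_i = \lambda_1$, i.e.\ (after reordering within the $S^\lambda$-coset, which does not change $\gotM^w_{\lambda,\mu}$ but here we are at the level of the fixed ordered product) precisely the indices $i=1,\ldots,k$ must have $\lambda_i=\lambda_1$; so $\lambda_1=\cdots=\lambda_k$. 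Then matching the whole multisegment of $\tau^s$ against that of the generic $\pi$: each segment $[\lambda_1,e_i]$ of $\pi$ must equal $[\lambda_i,c_i]=[\lambda_1,c_i]$ for one of these first $k$ factors, which pins $c_i=e_i$ for $i\le k$, and forces $e_i = \mu_{w(i)}$ exactly when the full segment $[\lambda_i,\mu_{w(i)}]$ is ``used up''. Writing $j_i = w(i)$ for $i\le k$ (after noting the maximality condition in $S(\lambda,\mu)$ arranges the relevant $w(i)$ in increasing order when $\lambda_1=\cdots=\lambda_k$), we get $e_i=\mu_{j_i}$ with $j_1<\cdots<j_k$. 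For the remaining indices $i>k$ with $\lambda_i=\lambda_1$ one must have $c_i=\lambda_1-1$, i.e.\ the cut is trivial and the whole segment passes to $\delta^s$; segments of $\pi$ cannot absorb any contribution starting strictly above $\lambda_1$ because $\pi$ is generic with that rigid shape.

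**Step 2: identifying $\delta^s$.** With the cut points determined — $c_i=e_i=\mu_{j_i}$ for $i\le k$ (so $[c_i+1,\mu_{w(i)}] = [\mu_{j_i}+1,\mu_{j_i}]$ is trivial and contributes nothing), and $c_i=\lambda_i-1$ for $i>k$ (so $[c_i+1,\mu_{w(i)}]=[\lambda_i,\mu_{w(i)}]$ passes through intact) — the multisegment of $\delta^s$ is exactly $\sum_{i=k+1}^m[\lambda_i,\mu_{w(i)}]$. This is $\gotM^x_{\check\lambda,\check\mu}$ where $\check\lambda=(\lambda_{k+1},\ldots,\lambda_m)$, $\check\mu$ is $\mu$ with the entries $\mu_{j_1},\ldots,\mu_{j_k}$ removed, and $\underline{x}\colon\{k+1,\ldots,m\}\to\{1,\ldots,m\}\setminus\{j_1,\ldots,j_k\}$ is the restriction $i\mapsto w(i)$. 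One must then take the longest coset representative: the geometric lemma in $\mathcal{R}$ only sees $[\delta^s]=[M(\gotM^x_{\check\lambda,\check\mu})]$, so it suffices to observe that $x\in S(\check\lambda,\check\mu)$, which follows from $Q(\lambda,\mu)$ being a lower ideal and the passage to a ``sub-problem'' (as in the $\ast$-construction of Section \ref{sect-kl}). The final clause — $\underline{x}(i)=w(i)$ whenever $\lambda_i>\lambda_1$ and $\mu_{w(i)}>e_k$ — is immediate once we know $\underline{x}$ is literally the restriction of $w$ to the surviving indices; the side conditions $\lambda_i>\lambda_1$, $\mu_{w(i)}>e_k$ merely guarantee $i>k$ (so $i$ is genuinely in the domain of $\underline{x}$) and that $w(i)\notin\{j_1,\ldots,j_k\}$ (so it is in the codomain), because $\mu_{j_r}=e_r\le e_k<\mu_{w(i)}$ rules out $w(i)=j_r$.

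**Main obstacle.** The delicate point is Step 1: the hypothesis only says $m(\pi,\tau^s)>0$, i.e.\ $\pi$ is \emph{some} irreducible subquotient of $\tau^s=L(\sum_i[\lambda_i,c_i])$, not that $\pi\cong\tau^s$. A priori $\tau^s$ could be a larger standard module with $\pi$ appearing deeper in its composition series, and then the cut points $c_i$ would not be forced. The resolution is that $\pi$ is \emph{generic} (its segments being pairwise unlinked, all sharing left endpoint $\lambda_1$), so genericity of $\pi$ propagates: if $L(\gotM')$ is generic and appears in $L(\gotM'')$ for a multisegment $\gotM''$ of the same supercuspidal support and the same number of segments, one needs $\gotM''=\gotM'$ — here this is clean because the number of segments of $\tau^s$ equals $m$ minus the number of trivial cuts, and the supercuspidal-support count at the point $\lambda_1$ together with the generic/unlinked rigidity of $\pi$ leaves no room. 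I would spell this out via the support counting at $\lambda_1$ first, reducing to $k$ nontrivial factors all of the form $[\lambda_1,c_i]$, and then invoke that a product of the generic building blocks $L([\lambda_1,c_i])$ contains $\pi=L(\sum[\lambda_1,e_i])$ only when the multisets $\{c_i\}$ and $\{e_i\}$ coincide (Remark \ref{rmk} gives the relevant multiplicity-one anchor, and any extra nontrivial segment would push the support strictly above what $\supp(\pi)$ allows).
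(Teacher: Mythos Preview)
Your argument contains a genuine gap. First, a convention issue: the lemma's conclusion that $\check\lambda=(\lambda_{k+1},\ldots,\lambda_m)$ indexes $\delta^s$ forces $\delta^s$ to carry the \emph{left} halves $[\lambda_i,c_i]$ and $\tau^s$ the \emph{right} halves $[c_i+1,\mu_{w(i)}]$, opposite to what you wrote (and consistent with the usage in the proof of Proposition~\ref{bruhat-prop}). More substantially, your Step~1 assertion that each segment $[\lambda_1,e_t]$ of $\pi$ must equal a \emph{single} piece of the factorisation of $\tau^s$ is false. Because $\pi$ is merely a subquotient of the product $\tau^s$, a segment $[\lambda_1,e_t]$ can be assembled from a chain of consecutive pieces $[c_{i_0}+1,\mu_{w(i_0)}],\,[c_{i_1}+1,\mu_{w(i_1)}],\ldots$ with $c_{i_{r+1}}=\mu_{w(i_r)}$. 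Take $m=2$, $\lambda=(0,2)$, $\mu=(1,3)$, $w=e$, $k=1$, $\pi=L([0,3])$: with $c_1=-1$, $c_2=1$ one has $[\tau^s]=[L([0,1])\times L([2,3])]$, which contains $L([0,3])$, while $\delta^s$ is trivial. Here $e_1=3=\mu_2$ so $j_1=2\ne w(1)=1$, and $\underline{x}(2)=1\ne w(2)=2$. Thus your conclusions that $j_i=w(i)$ and that $\underline{x}$ is the literal restriction of $w$ are both wrong in general; the ``moreover'' clause of the lemma is deliberately weaker precisely to accommodate such gluing.

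The paper's proof accounts for this by introducing a map $\tilde{z}\colon I\setminus I'\to I$ which, for each non-initial piece $\Delta_i=[c_i+1,\mu_{w(i)}]$ in the assembly of some $[\lambda_1,e_t]$, records the index of the \emph{preceding} piece; then $c_i=\mu_{w\tilde{z}(i)}$, so the left half $[\lambda_i,c_i]$ becomes $[\lambda_i,\mu_{wz(i)}]$ (with $z$ the extension of $\tilde{z}$ by the identity outside $I$). This yields $[\delta^s]=[M(\sum_{i\notin I'}[\lambda_i,\mu_{wz(i)}])]$ and $\underline{x}=wz$; the equality $\underline{x}(i)=w(i)$ then holds exactly when $z(i)=i$, i.e.\ when $i\notin I$, which the hypotheses $\lambda_i>\lambda_1$ and $\mu_{w(i)}>e_k$ guarantee.
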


\begin{proof}
Recall that irreducible subquotients of Jacquet modules of a segment representation $L([a,b])$ are all given in the form $L([c+1,b])\otimes L([a,c])$, for some $a-1\leq c\leq b$. Hence, there are integers $\lambda_i-1 \leq c_i\leq \mu_{w(i)}$ for which
\[
[\tau^s] = \left[M\left(\sum_{i=1}^m [c_i +1, \mu_{w(i)}] \right)\right],\quad [\delta^s] = \left[M\left(\sum_{i=1}^m [\lambda_i,c_i] \right)\right]
\]
hold in $\mathcal{R}$. We write $I = \{ i\,:\, c_i<\mu_{w(i)}\}$, $\Delta_i= [c_i+1,\mu_{w(i)}]$ for all $i\in I$, and $I' = \{i\in I\,:\, b(\Delta_i)=\lambda_1\}$.

Since $\supp(\pi) = \supp(\tau^s)$, it is easy to see that we must be able to write
\[
\sum_{i\in I}\Delta_i = \sum_{t=1}^k \sum_{j=1}^{l_i-1} [d^t_{j},d^t_{j+1}-1]\;,
\]
where $\lambda_1 = d^t_1<d^t_2<\ldots< d^t_{l_i} = e_i+1$, for all $1\leq t\leq k$.

We define a function $\tilde{z}:I\setminus I' \to I$ in the following manner: For $i\in I\setminus I'$ let $(t,j)$, $1<j$, be the index for which $\Delta_i = [d^t_j,d^t_{j+1}-1]$. Then, $\tilde{z}(i)$ is the index for which $\Delta_{\tilde{z}(i)}= [d^t_{j-1},d^t_{j}-1]$. Note, that
\[
c_i = b(\Delta_i)-1 = e(\Delta_{\tilde{z}(i)}) = \mu_{w\tilde{z}(i)}\;.
\]

Let us extend $\tilde{z}$ to $z:\{1,\ldots,m\}\setminus I'\to \{1,\ldots,m\}$ by $z(i)=i$ for $i\not\in I$. We can now write
\[
[\delta^s] = \left[M\left(\sum_{i\not\in I'} [\lambda_i,\mu_{wz(i)}] \right)\right]\;.
\]
The statement readily follows with $\underline{x} = wz$, after identifying $\{k+1,\ldots,m\}$ with $\{1,\ldots,m\}\setminus I'$.

\end{proof}

\begin{lemma}\label{lem-bruhat-spe}
Let $\lambda,\mu\in \mathcal{P}_m$ be given. Suppose that $x,w\in S(\lambda,\mu)$ are two permutations for which $x\nleq w$ in the Bruhat order of $S_m$. Then, there exist $1\leq i,j\leq m$ for which

\begin{equation}\label{condi}
|\left\{1\leq i'\leq i\::\: x(i')\geq j\right\}| > \left|\left\{1\leq i'\leq i\::\: w(i')\geq j\right\}\right|
\end{equation}
holds, and such that $\lambda_i<\lambda_{i+1}$ or $i=m$, and $\mu_{j-1}<\mu_j$ or $j=1$.
\end{lemma}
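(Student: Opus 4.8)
The plan is to recognize Lemma~\ref{lem-bruhat-spe} as the statement that, for \emph{longest} double-coset representatives, the rank-function criterion for the Bruhat order need only be tested at "wall" indices. Recall the classical rank criterion for the Bruhat order on $S_m$: for $y\in S_m$ set $r_y(i,j)=|\{1\le i'\le i:\ y(i')\ge j\}|$; then $x\le w$ if and only if $r_x(i,j)\le r_w(i,j)$ for all $i,j\in\{1,\dots,m\}$. Thus condition~\eqref{condi} is precisely the assertion $r_x(i,j)>r_w(i,j)$, and the lemma is equivalent to its contrapositive: \emph{if $r_x(i,j)\le r_w(i,j)$ holds at every pair $(i,j)$ for which $i$ is a $\lambda$-wall ($\lambda_i<\lambda_{i+1}$ or $i=m$) and $j$ is a $\mu$-wall ($\mu_{j-1}<\mu_j$ or $j=1$), then $x\le w$.} The key structural input is that $x,w\in S(\lambda,\mu)$ are longest in their double cosets: if $\lambda_k=\lambda_{k+1}$ then $s_k\in S^\lambda$, so $xs_k<x$, i.e.\ $x(k)>x(k+1)$; and if $\mu_l=\mu_{l+1}$ then $s_l\in S^\mu$, so $s_lx<x$, i.e.\ $x^{-1}(l)>x^{-1}(l+1)$; likewise for $w$. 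In words: on each maximal run of equal entries of $\lambda$ the permutation $x$ is strictly decreasing, and on each maximal run of equal entries of $\mu$ the positions $x^{-1}(\cdot)$ of the corresponding consecutive values are strictly decreasing.

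The core of the argument consists of two interpolation identities. Fix $i$ and let $j$ range over a maximal $\mu$-block $\{c,c+1,\dots,d\}$, so that $c$ and $d+1$ are $\mu$-walls (using the convention $r_y(i,m+1):=0$ when $d=m$). The monotonicity of $x^{-1}$ on this run forces the positions of the values $j,j+1,\dots,d$ to be the $d-j+1$ smallest among the positions carrying values in $[c,d]$, which yields
\[
r_x(i,j)=\min\bigl(r_x(i,d+1)+(d+1-j),\ r_x(i,c)\bigr),
\]
and the identical formula holds with $w$ in place of $x$. Symmetrically, fixing $j$ and letting $i$ range over a maximal $\lambda$-block $\{a,\dots,b\}$ (with $a-1$ and $b$ being $\lambda$-walls, $r_y(0,j):=0$), the monotonicity of $x$ on the run gives $r_x(i,j)=\min\bigl(r_x(a-1,j)+(i-a+1),\ r_x(b,j)\bigr)$, again with the same formula for $w$. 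Since for any fixed $t$ the map $(P,Q)\mapsto\min(Q,\ P+t)$ is nondecreasing in $P$ and in $Q$, the inequality $r_x\le r_w$ at the two bracketing walls propagates to the intermediate index. Hence, assuming $r_x\le r_w$ at all (wall, wall) pairs: the column identity upgrades this to $r_x(i,j)\le r_w(i,j)$ for every $\lambda$-wall $i$ and arbitrary $j$; the row identity then upgrades it to all $(i,j)$; so $x\le w$, contradicting the hypothesis. Therefore $r_x(i,j)>r_w(i,j)$ must hold at some (wall, wall) pair, which is exactly the conclusion of the lemma.

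The genuinely substantive step is extracting the two interpolation identities from the descent conditions supplied by maximality in the double coset; once those are in hand the rest is the monotonicity of $\min$. The remaining work is routine bookkeeping that I would carry out carefully in the write-up: pinning down index conventions so that \eqref{condi} is literally $r_x(i,j)>r_w(i,j)$, and handling the boundary blocks $a=1$, $b=m$, $c=1$, $d=m$, where one of the bracketing "walls" is the artificial index $0$ or $m+1$ and the rank inequality there is a trivial equality.
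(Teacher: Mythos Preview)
Your argument is correct and is a genuinely different route from the paper's. The paper proceeds directly: it starts from an arbitrary admissible pair $(i_0,j_0)$, first slides to the minimal $i_1$ in its column (forcing $w(i_1)<j_0$), then uses the descent $w(i_1)>\cdots>w(i_2)$ on the $\lambda$-block to push $i$ rightward to the wall $i_2$ without destroying admissibility; it then repeats the manoeuvre in the $\mu$-direction to push $j$ leftward to a wall. In particular the paper only ever invokes the descent conditions on $w$, never on $x$.

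Your contrapositive approach instead extracts the closed-form interpolation $r_y(i,j)=\min\bigl(r_y(i,d{+}1)+(d{+}1{-}j),\,r_y(i,c)\bigr)$ on each $\mu$-block (and the analogous row identity on each $\lambda$-block), and then lets the monotonicity of $\min$ propagate the wall inequalities inward. This is more structural and arguably cleaner, and it makes transparent exactly why walls suffice. One minor observation: you do not actually need the identity for $x$---for any permutation one has the trivial upper bound $r_x(i,j)\le\min\bigl(r_x(i,d{+}1)+(d{+}1{-}j),\,r_x(i,c)\bigr)$, and combining this with the \emph{equality} for $w$ already yields $r_x(i,j)\le r_w(i,j)$. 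So, like the paper, your argument really only consumes the longest-coset hypothesis on $w$.
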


\begin{proof}
Let us call a pair of indices $1\leq i,j \leq m$ satisfying condition \eqref{condi} an admissible pair. The existence of an admissible pair for $x\nleq w$ is a well-known criterion, which is even taken for the definition of the Bruhat order on permutation groups in some sources. Let us start by choosing an arbitrary admissible pair $(i_0,j_0)$.

Let $i_1$ be the minimal index for which $(i_1,j_0)$ is admissible. It follows that
\[
\left|\left\{1\leq i'\leq i_1-1\::\: w(i')\geq j_0\right\}\right| = \left|\left\{1\leq i'\leq i_1-1\::\: x(i')\geq j_0\right\}\right| =
\]
\[
=\left|\left\{1\leq i'\leq i_1\::\: x(i')\geq j_0\right\}\right|-1\;.
\]
Hence, $w(i_1)<j_0$. Let $i_2\geq i_1$ be such that $\lambda_{i_1} = \lambda_{i_1+1} = \ldots = \lambda_{i_2} < \lambda_{i_2}+1$ or $i_2=m$. Since $w$ is a permutation of maximal length in $w S^\lambda$, we have $j_0> w(i_1)> w(i_1+1)>\ldots > w(i_2)$. Thus,
\[
\left|\left\{1\leq i'\leq i_2\::\: w(i')\geq j_0\right\}\right| = \left|\left\{1\leq i'\leq i_1\::\: w(i')\geq j_0\right\}\right|<
\]
\[
<\left|\left\{1\leq i'\leq i_1\::\: x(i')\geq j_0\right\}\right| \leq \left|\left\{1\leq i'\leq i_2\::\: x(i')\geq j_0\right\}\right|
\]
and $(i_2,j_0)$ is admissible.

Now, let $j_1$ be the maximal index for which $(i_2,j_1)$ is admissible. By a similar argument as before we see from maximality that $w^{-1}(j_1)>i_2$.

Let $j_2\leq j_1$ be such that $\mu_{j_1} = \mu_{j_1-1} = \ldots = \mu_{j_2} > \mu_{j_2-1}$ or $j_2=1$. Since $w$ is a permutation of maximal length in $S^\mu w$, we have $w^{-1}(j_2) > \ldots> w^{-1}(j_1-1) > w^{-1}(j_1)> i_2$. Thus,
\[
\left|\left\{1\leq i'\leq i_2\::\: w(i')\geq j_2\right\}\right| = \left|\left\{1\leq i'\leq i_2\::\: w(i')\geq j_1\right\}\right|<
\]
\[
<\left|\left\{1\leq i'\leq i_2\::\: x(i')\geq j_1\right\}\right| \leq \left|\left\{1\leq i'\leq i_2\::\: x(i')\geq j_2\right\}\right|
\]
and $(i_2,j_2)$ is admissible.

\end{proof}

\begin{proposition}\label{bruhat-prop}
Let $\lambda,\mu\in \mathcal{P}_m$ be given. Suppose that
\[
m(\pi_{\otimes}, \mathbf{r}_{\alpha_\pi}(M(\gotM^x_{\lambda,\mu}))>0
\]
holds, where $\pi = L(\gotM^w_{\lambda,\mu})$, for some $w,x\in S(\lambda,\mu)$.

Then $x\leq w$ in the Bruhat order of $S_m$.

\end{proposition}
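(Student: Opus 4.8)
The plan is to prove this by induction on $m$, stripping off at each step the block of segments of $\gotM^w_{\lambda,\mu}$ having the smallest beginning point. For $m=1$ the statement is vacuous, since then $S(\lambda,\mu)=\{e\}$.

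For the inductive step, let $\lambda_1$ be the smallest beginning point occurring in $\gotM^w_{\lambda,\mu}$ and let $k$ be maximal with $\lambda_1=\ldots=\lambda_k$; this is the block $S_1$ from the definition of the indicator representation. As $w$ and $x$ are longest in their double cosets, both are strictly decreasing on $\{1,\ldots,k\}$. Following the discussion before Proposition \ref{prop-count-mul}, write $\pi_\otimes=L\bigl(\sum_{i=1}^{k}[\lambda_1,\mu_{w(i)}]\bigr)\otimes\pi'_\otimes$ with $\pi'=L(\gotM^{w'}_{\check\lambda,\check\mu})$, where $\check\lambda=(\lambda_{k+1},\ldots,\lambda_m)$, $\check\mu$ is $\mu$ with the entries $\mu_{w(1)},\ldots,\mu_{w(k)}$ deleted, and $w'\in S(\check\lambda,\check\mu)$ is induced by restricting $w$ to $\{k+1,\ldots,m\}$ (one checks that $w'$ is again a longest double-coset representative). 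Since $[M(\gotM^x_{\lambda,\mu})]=\prod_{i=1}^{m}[L([\lambda_i,\mu_{x(i)}])]$ in $\mathcal{R}$, Proposition \ref{prop-count-mul} gives
\[
m\bigl(\pi_\otimes,\mathbf{r}_{\alpha_\pi}(M(\gotM^x_{\lambda,\mu}))\bigr)
=\sum_{s} m\bigl(L(\textstyle\sum_{i=1}^{k}[\lambda_1,\mu_{w(i)}]),\,\tau^s\bigr)\cdot m\bigl(\pi'_\otimes,\,\mathbf{r}_{\alpha_{\pi'}}(\delta^s)\bigr),
\]
the sum being over $s\in\mathcal{J}(L([\lambda_1,\mu_{x(1)}]),\ldots,L([\lambda_m,\mu_{x(m)}]))$. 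Fix an $s$ making both factors positive. Applying Lemma \ref{lem-tech} with its permutation taken to be $x$ and its generic representation taken to be $L(\sum_{i=1}^{k}[\lambda_1,\mu_{w(i)}])$, we learn that $[\delta^s]=[M(\gotM^{x'}_{\check\lambda,\check\mu})]$ for some $x'\in S(\check\lambda,\check\mu)$, that the deleted $\mu$-entries are exactly $\mu_{w(1)},\ldots,\mu_{w(k)}$ (so $\pi'=L(\gotM^{w'}_{\check\lambda,\check\mu})$ for this very $\check\mu$), and --- the key extra information --- that $\underline{x'}(i)=x(i)$ for every $i>k$ with $\mu_{x(i)}>\mu_{w(1)}$. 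Because $m\bigl(\pi'_\otimes,\mathbf{r}_{\alpha_{\pi'}}(M(\gotM^{x'}_{\check\lambda,\check\mu}))\bigr)>0$, the induction hypothesis yields $x'\leq w'$ in $S_{m-k}$.

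It remains to lift this to $x\leq w$ in $S_m$. I would do this by contraposition: assuming $x\nleq w$, Lemma \ref{lem-bruhat-spe} supplies indices $i,j$ sitting at boundaries of the constancy intervals of $\lambda$ and of $\mu$ with $|\{i'\le i:x(i')\ge j\}|>|\{i'\le i:w(i')\ge j\}|$, and the $\lambda$-block condition forces $i\ge k$. One then pushes this witness down to the truncated data: the part coming from $\{1,\ldots,k\}$ is controlled by the decreasing arrangements of $\{x(1),\ldots,x(k)\}$ and of $\{w(1),\ldots,w(k)\}$, whose $\mu$-value profiles are comparable by the support bookkeeping inside Lemma \ref{lem-tech}; the part coming from $\{k+1,\ldots,m\}$ survives the relabelling $\{k+1,\ldots,m\}\to\{1,\ldots,m-k\}$ thanks to the agreement $\underline{x'}(i)=x(i)$ on the ``high'' indices together with injectivity of $w'$. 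The outcome is a pair of indices certifying $x'\nleq w'$, contradicting the previous paragraph; hence $x\leq w$.

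The main obstacle is precisely this last step. The subtlety is that $x'$ is \emph{not} merely $x$ restricted to $\{k+1,\ldots,m\}$: it records the $\delta$-parts $[\lambda_i,c_i]$ of the cut segments, where $c_i\le\mu_{x(i)}$ with equality only on the range covered by the \emph{moreover} clause of Lemma \ref{lem-tech}. Combining this partial agreement with the deletion of the $k$ chosen entries of $\mu$ --- which can break constancy intervals of $\mu$ when $\mu$ has repetitions --- is where care is needed, and it is exactly for this that one wants the refined, interval-boundary form of the Bruhat criterion furnished by Lemma \ref{lem-bruhat-spe}.
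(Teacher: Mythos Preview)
Your plan uses the same two lemmas as the paper and the first half of your argument (stripping one $\lambda$-block via Lemma~\ref{lem-tech} and landing on a standard module $M(\gotM^{x'}_{\check\lambda,\check\mu})$) is exactly what the paper does at each stage. The divergence is in the global organization, and this is where your proposal has a real gap.

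You try to run an induction on $m$: strip one block, invoke the hypothesis to get $x'\le w'$, then \emph{lift} this to $x\le w$. As you yourself note, the lifting is the obstacle, and your sketch does not resolve it. The difficulty is precisely that the ``moreover'' clause of Lemma~\ref{lem-tech} only pins down $\underline{x'}(i)=x(i)$ on indices with $\mu_{x(i)}>\mu_{w(1)}$; on the remaining indices $x'$ can genuinely rearrange, so a Bruhat witness $(i_0,j_0)$ for $x\nleq w$ need not survive the passage to $(\check\lambda,\check\mu)$ in any obvious way. Your description of how the two ``parts'' of the witness are supposed to transfer is heuristic and does not handle this rearrangement; in particular, when $j_0\le w(1)$ the deletion of the $\mu_{w(i)}$'s breaks the very constancy interval of $\mu$ that $j_0$ sits on, so the refined boundary form from Lemma~\ref{lem-bruhat-spe} is no longer automatically available for $\check\mu$.

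The paper sidesteps the lifting problem entirely. It does \emph{not} induct on $m$. Instead it argues by contradiction: assume $x\nleq w$, pick the boundary witness $(i_0,j_0)$ from Lemma~\ref{lem-bruhat-spe} \emph{first}, and then iterate the stripping of Lemma~\ref{lem-tech} \emph{all the way up to $i_0$}, i.e.\ peel off the entire block $(\pi_1)_\otimes=L(\sum_{i\le i_0}[\lambda_i,\mu_{w(i)}])_\otimes$ from $\tau^s$. At the end of that iteration the residual $\tau$-part must be trivial, which forces the remaining multisegment $\gotM^{\overline{x}}_{\overline{\lambda},\overline{\mu}}$ (with $\overline{\lambda}=(\lambda_{i_0+1},\dots,\lambda_m)$ and $\overline{\mu}=\mu-(\mu_{w(i)})_{i\le i_0}$) to consist only of empty segments: $\overline{\mu}_i=\lambda_{\overline{x}(i)}-1$ for all $i>i_0$. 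This single equation is then fed into a direct counting argument on the quantity $Q=|\{i>i_0:\overline{\mu}_i\ge\mu_{j_0}\}|$, computed once via $\overline{x}$ and once via the construction of $\overline{\mu}$ from $w$, and the two counts contradict the witness inequality. No Bruhat relation between truncated permutations is ever asserted, so nothing needs to be lifted.

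In short: your inductive scheme would require, at its heart, essentially the same counting argument the paper performs at $i_0$, but repeated and re-justified at every intermediate block; doing it once at the pre-chosen $i_0$ is both simpler and what makes the paper's proof go through.
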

\begin{proof}
Suppose that $x\nleq w$. Let $1\leq i_0,j_0\leq m$ be the indices given by Lemma \ref{lem-bruhat-spe} in this case. Consider the representations
\[
\pi_1 = L\left( \sum_{i=1}^{i_0} [\lambda_i,\mu_{w(i)}] \right),\quad \pi_2 = L\left( \sum_{i=i_0+1}^{m} [\lambda_i,\mu_{w(i)}] \right)\;.
\]
Note that $\pi_\otimes =(\pi_1)_\otimes\otimes (\pi_2)_\otimes$.
It follows from the assumption $m(\pi_{\otimes}, \mathbf{r}_{\alpha_\pi}(M(\gotM^x_{\lambda,\mu}))>0$ that there is $s\in \mathcal{J}(L([\lambda_1,\mu_{x(1)}]),\ldots,L([\lambda_m,\mu_{x(m)}]))$, for which $m((\pi_1)_\otimes, \tau^s), m((\pi_2)_\otimes, \delta^s)>0$ hold.

Again, from \eqref{jac-seg}, there are integers $\lambda_i \leq c_i\leq \mu_{x(i)}+1$ for which
\[
[\tau^s] = \left[M\left(\sum_{i=1}^m [c_i , \mu_{x(i)}] \right)\right],\quad [\delta^s] = \left[M\left(\sum_{i=1}^m [\lambda_i,c_i-1] \right)\right]
\]
hold in $\mathcal{R}$. Since $\supp(\pi_2)$ must equal the supercuspidal support of the irreducible subquotients of $\delta^s$ and (unless $\pi_2$ is the trivial representation of $G_0$) $b(\pi_2) \geq \lambda_{i_0+1}> \lambda_i$ for all $1\leq i\leq i_0$, we must have $c_i = \lambda_i$ for all $1\leq i\leq i_0$.

Let $l$ be the maximal index for which $\lambda_1 =\lambda_l$. Define $\sigma = L(\sum_{i=1}^l [\lambda_1, \mu_{w(i)}])$. Then we can write $(\pi_1)_\otimes = \sigma \otimes (\pi'_1)_\otimes$. Since $m((\pi_1)_\otimes, \tau^s)>0$, by Proposition \ref{prop-count-mul} there must be $s'\in \mathcal{J}(L([c_1, \mu_{x(1)}]), \ldots, L([c_m, \mu_{x(m)}]))$ for which $m(\sigma, \tau^{s'})$ and $m((\pi'_1)_\otimes, \delta^{s'})$ are positive.

Write $I= \{1\leq i\leq l\,:\, \lambda_1 \leq \mu_{w(i)}\}$. By Lemma \ref{lem-tech} we have $[\delta^{s'}] = [M(\gotM^{x'}_{\lambda', \mu'})]$, where $\lambda' = (\lambda_{|I|+1}, \ldots, \lambda_m)$, $\mu' = \mu-(\mu_{w(i)})_{i\in I}$, and $x'\in S(\lambda', \mu')$.

For all $i\in \{1,\ldots,l\}\setminus I$, we have $\mu_{w(i)} = \mu_{x'(i')}$ for some $i'$. Since this forces $\lambda_{i'}= \lambda_1$ and $[\lambda_{i'}, \mu_{x'(i')}]$ is a trivial segment, we can furthermore write
\[
[\delta^{s'}] = [M(\gotM^{x''}_{\lambda^{''}, \mu^{''}})]\;,
\]
where $\lambda'' = (\lambda_{l+1}, \ldots, \lambda_m)$, $\mu''=\mu-(\mu_{w(i)})_{i=1}^l$, and $x''\in S(\lambda'', \mu'')$.

Writing $(\pi'_1)_\otimes = \sigma' \otimes (\pi^{''}_1)_\otimes$ and proceeding similarly we arrive at the situation in which $\gotM^{\overline{x}}_{\overline{\lambda},\overline{\mu}}=0$, where $\overline{\lambda} = (\lambda_{i_0+1},\ldots,\lambda_m)$, $\overline{\mu}= (\overline{\mu}_{i_0+1},\ldots, \overline{\mu}_m)= \mu- (\mu_{w(i)})_{i=1}^{i_0} $ , and $\overline{x}\in S(\overline{\lambda}, \overline{\mu})$. Thus, $\overline{\mu}_i = \lambda_{\overline{x}(i)}-1$, for all $i_0 < i\leq m$.

Consider the quantity $Q = |\{i_0<i\leq m\,:\, \overline{\mu}_i \geq \mu_{j_0}\}|$. Note, that
\[
Q = |\{i_0<i\leq m\,:\, \lambda_i \geq \mu_{j_0}+1\}|\leq
\]
\[
\leq |\{i_0< i\leq m \,:\, \mu_{x(i)} \geq \mu_{j_0}\}| = m- (j_0-1) - |\{1\leq i\leq i_0 \,:\, \mu_{x(i)} \geq \mu_{j_0}\}|\;,
\]
with the inequality coming from $\lambda_i\leq \mu_{x(i)}+1$ and the last equality is due to the assumed fact $\mu_{j_0-1}< \mu_{j_0}$. On the other hand, by construction of $\overline{\mu}$,
\[
Q = m-(j_0-1) - |\{1\leq i\leq i_0 \,:\, \mu_{w(i)} \geq \mu_{j_0}\}|\;.
\]
In particular,
\[
|\{1\leq i\leq i_0 \,:\, x(i) \geq j_0\}| = |\{1\leq i\leq i_0 \,:\, \mu_{x(i)} \geq \mu_{j_0}\}|\leq
\]
\[
\leq |\{1\leq i\leq i_0 \,:\, \mu_{w(i)} \geq \mu_{j_0}\}| = |\{1\leq i\leq i_0 \,:\, w(i) \geq j_0\}|
\]
gives a contradiction to the assumptions on $(i_0,j_0)$.

\end{proof}

\section{Conjectures}\label{sect-conj}

Let us focus on the problem of determining the irreducible factors of a product of two ladder representations in $\mathcal{R}$.

Suppose that $\pi_1, \pi_2\in\irr_{\langle \rho \rangle}$ are given ladder representations. We can uniquely write $\pi_1=L(\gotM^e_{\lambda_1,\mu_1}), \pi_2=L(\gotM^e_{\lambda_2,\mu_2})$, for some $\lambda^i = (\lambda^i_1< \ldots < \lambda^i_{m_i}), \mu^i = (\mu^i_1 <\ldots < \mu^i_{m_i}) \in \mathcal{P}_{m_i}$, $i=1,2$, such that $\lambda^i_j\leq \mu^i_j$ for all $i,j$. Denote $\lambda = \lambda_1 + \lambda_2$ and $\mu = \mu_1+\mu_2$.

We write $x\in S(\lambda, \mu)$ for the permutation which satisfies $\gotM^e_{\lambda_1,\mu_1}+ \gotM^e_{\lambda_2,\mu_2} = \gotM^x_{\lambda,\mu}$.

In light of Corollaries \ref{cor-patt} and \ref{cor-main-2}, we can now write
\[
[\pi_1] \times [\pi_2] = \sum_{w\in \mathcal{I}(\pi_1,\pi_2)} \left[L\left(\gotM^w_{\lambda,\mu}\right)\right]\;,
\]
for a subset
\[
\mathcal{I}(\pi_1,\pi_2)\subset \{w\in S(\lambda, \mu)\;:\; x\leq w\,,\,w\mbox{ avoids }321\}\subset S_{m_1+m_2}\;.
\]
Thus, the determination of the sets $\mathcal{I}(\pi_1,\pi_2)$ is the remaining piece of information required to solve the full multiplicity problem for the case of two ladder representations.

In this section we would like to present and discuss a conjectural description of these permutation sets.

\begin{conjecture}\label{main-conj}
Let $\pi_1, \pi_2\in\irr_{\langle \rho \rangle}$ be ladder representations with notation as above.

Then, for all $\sigma= L(\gotM^w_{\lambda,\mu})$ with $w\in S(\lambda,\mu)$, we have that
\[
m(\sigma, \pi_1\times \pi_2)=1\;,
\]
if and only if,
\[
\left\{ \begin{array}{l} m(\sigma_\otimes, \mathbf{r}_{\alpha_\sigma}(\pi_1\times\pi_2)) =1 \\ w \mbox{ avoids }321 \end{array}\right.\;.
\]

\end{conjecture}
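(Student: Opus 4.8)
\textbf{The ``only if'' direction} is immediate from results already established. Suppose $m(\sigma,\pi_1\times\pi_2)=1$. Since $\sigma$ is then an irreducible constituent of a product of two ladder representations, Corollary \ref{cor-patt} (with $k=2$, where the forbidden pattern is $321$) shows that $w$ avoids $321$. Moreover, inequality \eqref{maj} gives $m(\sigma_\otimes,\mathbf{r}_{\alpha_\sigma}(\pi_1\times\pi_2))\ge m(\sigma,\pi_1\times\pi_2)=1$, while Proposition \ref{mult-one} gives the reverse bound $\le 1$; hence $m(\sigma_\otimes,\mathbf{r}_{\alpha_\sigma}(\pi_1\times\pi_2))=1$. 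So both right-hand conditions hold, using only tools already in the paper.

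\textbf{The ``if'' direction --- plan.} The real content is the converse: assuming $w$ avoids $321$ and $m(\sigma_\otimes,\mathbf{r}_{\alpha_\sigma}(\pi_1\times\pi_2))=1$, one must show $\sigma$ actually occurs --- necessarily with multiplicity exactly one, by Theorem \ref{main-2}. Since, by the first part, \emph{every} irreducible constituent of $\pi_1\times\pi_2$ already satisfies the two conditions, this amounts to showing that the set $\mathcal{I}(\pi_1,\pi_2)$ of those $w$ with $m(L(\gotM^w_{\lambda,\mu}),\pi_1\times\pi_2)=1$ coincides with the (a priori larger) set of $w$ satisfying the two conditions. I would prove this by induction on the number $\#\gotM^w_{\lambda,\mu}$ of segments of $\sigma=L(\gotM^w_{\lambda,\mu})$, peeling off the leftmost ``column'' $\gotM^w_{\lambda,\mu}\cdot\mathbbm{1}_{S_1}$ of segments with the minimal begin-point, which, since $\pi_1,\pi_2$ are ladders, consists of one or two segments. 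On the indicator side the recursion of Proposition \ref{prop-count-mul} controls exactly how $m(\sigma_\otimes,\mathbf{r}_{\alpha_\sigma}(\pi_1\times\pi_2))$ is built from the Kret--Lapid \cite{LapidKret} Jacquet-module formula for $\pi_1,\pi_2$ and the multiplicity $m(\sigma'_\otimes,\mathbf{r}_{\alpha_{\sigma'}}(\cdots))$ for the reduced datum $\sigma'=L\bigl(\gotM^w_{\lambda,\mu}-\gotM^w_{\lambda,\mu}\cdot\mathbbm{1}_{S_1}\bigr)$. The crux is to produce a matching decomposition on the occurrence side: one wants to use $321$-avoidance of $w$ to force the minimal-begin segments of $\gotM^w_{\lambda,\mu}$ to sit compatibly with the leftmost segments of $\pi_1$ and $\pi_2$ --- essentially the uniqueness analysis carried out at the end of the proof of Proposition \ref{mult-one}, combined with Lemma \ref{mul-lem} --- so that $\sigma$ occurs in $\pi_1\times\pi_2$ if and only if $\sigma'$ occurs in the product $\pi_1'\times\pi_2'$ of the strictly smaller ladders obtained by deleting a leftmost segment from each of $\pi_1,\pi_2$ (or from just one of them, when the minimal begin-point occurs only once), and the Jacquet hypothesis descends to this smaller product. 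The base case, at most two segments, is checked directly from \eqref{jac-seg}.

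\textbf{Main obstacle, and an alternative route.} The difficulty is that this peeling need not be ``clean'': even for $321$-avoiding $w$ there can be subquotients of $\mathbf{r}_{\alpha_\sigma}(\pi_1\times\pi_2)$ that contribute a copy of $\sigma_\otimes$ but originate from constituents $\sigma'\ne\sigma$ of $\pi_1\times\pi_2$, and ruling this out --- i.e. showing that the Jacquet-module multiplicity $1$ is genuinely witnessed by a subrepresentation --- is exactly the obstruction that the smooth case of Section \ref{sect-smooth} manages to sidestep, and which in general is present precisely for permutations that are $321$-avoiding but not $3412$-avoiding. An equivalent, more algebraic formulation, recorded in the paper as Proposition \ref{prop-conj}, turns the statement into a family of Kazhdan--Lusztig identities of the shape $\sum_{w^1,w^2}c_{e,w^1}c_{e,w^2}\,P_{w^1\ast w^2,\,w}(1)=1$ for the $w$ in question, where the sum is over the standard-module expansions \eqref{kl-pol-inv} of the two ladders; here one could try to exploit that $321$-avoiding permutations are precisely the fully commutative elements of $S_m$, whose Kazhdan--Lusztig polynomials admit explicit heap-theoretic descriptions, together with the fact that the ladder hypothesis severely restricts which lower bounds $w^1\ast w^2$ occur. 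Either way, the hard part is to control the alternating-sign Kazhdan--Lusztig sum, equivalently the over-counting in the Jacquet module, across the full $321$-avoiding range; the smooth case of Section \ref{sect-smooth} is the instance where this cancellation can be made transparent by a purely combinatorial pattern-avoidance argument.
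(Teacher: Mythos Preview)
The statement you are attempting is Conjecture \ref{main-conj}, and the paper does \emph{not} prove it in general. So there is no ``paper's own proof'' to compare against; only the forward implication is established, and the converse only under the extra hypothesis that $w$ avoids $3412$ (Theorem \ref{thm-smooth} and Corollary \ref{cor-smooth}).

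Your ``only if'' paragraph is correct and is exactly the argument the paper records: in the proof of Proposition \ref{prop-conj}, under ``(5) implies (1)'', the paper writes that one direction of the conjecture follows immediately from Corollary \ref{cor-patt}, Proposition \ref{mult-one} and inequality \eqref{maj}, which is precisely what you invoke.

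Your ``if'' plan is not a proof, and you yourself identify the gap accurately. The inductive peeling you describe is the mechanism behind Proposition \ref{mult-one} and Proposition \ref{prop-mulone}, but the step ``$\sigma$ occurs in $\pi_1\times\pi_2$ iff $\sigma'$ occurs in $\pi_1'\times\pi_2'$'' is exactly what is missing: the Jacquet-module contribution witnessing $\sigma_\otimes$ may come from some other constituent $\sigma'\ne\sigma$ of $\pi_1\times\pi_2$, and ruling this out is the content of condition (5) in Proposition \ref{prop-conj}, which the paper leaves as equivalent to the conjecture itself. The paper's smooth case works precisely because, when $w$ also avoids $3412$, Proposition \ref{prop-mulone} bounds the indicator multiplicity in the full standard module $M(\gotM^x_{\lambda,\mu})$ by $1$, so no other constituent can contribute; for general $321$-avoiding $w$ this bound fails. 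Your alternative route via the Kazhdan--Lusztig identity is likewise exactly condition (6) of Proposition \ref{prop-conj}, which the paper shows is equivalent to the conjecture and verifies only numerically for $n\le 11$.

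In short: your forward direction is a valid proof matching the paper; your converse is a correct diagnosis of where the open problem lies, not a proof of it.
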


\begin{remark}
The condition $x\leq w$ appearing in our initial definition of the set $\mathcal{I}(\pi_1,\pi_2)$ follows implicitly from Conjecture \ref{main-conj} by Proposition \ref{bruhat-prop}.
\end{remark}

Let us briefly discuss why the determination of the value of $m(\sigma_\otimes, \mathbf{r}_{\alpha_\sigma}(\pi_1\times\pi_2))$ is a task of low complexity, for any given case.

First, by Proposition \ref{mult-one} this value equals either $0$ or $1$. The proof of Proposition \ref{mult-one} also portraits a method of computation. We write $\sigma_\otimes = L(\Delta_1 + \Delta_2)\otimes \sigma'_\otimes$, where $\sigma'\in \irr$ is defined by a smaller multisegment. Then, by Proposition \ref{prop-count-mul} the question becomes whether there exists $s\in \mathcal{J}(\pi_1,\pi_2)$, for which both $m(L(\Delta_1 + \Delta_2), \tau^s)$ and $m(\sigma'_\otimes,  \mathbf{r}_{\alpha_{\sigma'}}(\delta^s))$ do not vanish. The collection of representations $\{\tau^s, \delta^s\}_{s\in \mathcal{J}(\pi_1,\pi_2)}$ is described in \cite{LapidKret}. In particular, $\tau^s, \delta^s$ are all products of two ladder representations. Hence, the value of $m(L(\Delta_1 + \Delta_2), \tau^s)$ is deduced immediately from Lemma \ref{mul-lem}, while $m(\sigma'_\otimes,  \mathbf{r}_{\alpha_{\sigma'}}(\delta^s))$ can be computed inductively.

\begin{proposition}\label{prop-conj}
The following conjectures are equivalent:

\begin{enumerate}

\item Conjecture \ref{main-conj}.

\item (Erez Lapid) Let $\pi_1, \pi_2\in\irr_{\langle \rho \rangle}$ be ladder representations with notation as above. Let us assume that $m_1-m_2 \in \{0,1\}$, the inequalities
\[
\left\{\begin{array}{l}\lambda_1^1 < \lambda_1^2 < \lambda_2^1 < \lambda_2^2 < \ldots < \lambda_{m_2}^2\;(< \lambda_{m_2+1}^1 ) \\

\mu_1^1 < \mu_1^2 < \mu_2^1 < \mu_2^2 < \ldots < \mu_{m_2}^2\;(< \mu_{m_2+1}^1 ) \end{array}\right.
\]
hold, and that $S(\lambda,\mu) = S_{m_1+m_2}$.

Then,
\[
\mathcal{I}(\pi_1,\pi_2)= \{w\in S(\lambda, \mu)\;:\; w\mbox{ avoids }321\}\;.
\]

\item Let $\pi_1, \pi_2\in\irr_{\langle \rho \rangle}$ be ladder representations with notation as above. Let us assume that $m_1-m_2 \in \{0,1\}$, and that
\[
\begin{array}{ll}
\lambda^1 = (0,2,\ldots, 2(m_1-1)),& \lambda^2= (1,3, \ldots, 2m_2-1), \\
\mu^1 = (2m_2, 2m_2 +2,\ldots, 2(m_2+m_1-1)), & \mu^2 = (2m_2+1, 2m_2+3,\ldots, 4m_2-1).
\end{array}
\]
Then,
\[
\mathcal{I}(\pi_1,\pi_2)= \{w\in S_{m_1+m_2}\;:\; w\mbox{ avoids }321\}\;.
\]

\item Let $\pi_1, \pi_2\in\irr_{\langle \rho \rangle}$ be ladder representations with notation as above. Let us assume that $m_1-m_2 \in \{0,1\}$, the inequalities
\[
\lambda_1^1 \leq \lambda_1^2 \leq \lambda_2^1 \leq \lambda_2^2 \leq \ldots \leq \lambda_{m_2}^2\;(\leq \lambda_{m_2+1}^1 )
\]
hold, and that $\gotM^x_{\lambda,\mu} = \gotM^e_{\lambda,\mu}$ (for the identity element $e\in S_{m_1+m_2}$).

Then,
\[
\mathcal{I}(\pi_1,\pi_2)= \{w\in S(\lambda, \mu)\;:\; w\mbox{ avoids }321\}\;.
\]

\item For all $\check{\lambda},\check{\mu}\in \mathcal{P}_m$ and all $w,w'\in S(\check{\lambda},\check{\mu})$ which avoid $321$ and for which $w\neq w'$, we have
\[
m(\sigma_\otimes, \mathbf{r}_{\alpha_\sigma}(\sigma'))=0\;,
\]
where $\sigma = L(\gotM^w_{\check{\lambda},\check{\mu}})$ and $\sigma' = L(\gotM^{w'}_{\check{\lambda},\check{\mu}})$.

\item For every integer $n$ and every $z\in S_n$ which avoids $321$, we have the following identity on values of Kazhdan-Lusztig polynomials:
\[
\sum_{\substack{ w_1\in S_{\lfloor n/2 \rfloor}, w_2\in S_{\lceil n/2 \rceil} \\ \widetilde{w_1\ast w_2} \leq z  }} \epsilon (w_1)\epsilon (w_2) P_{\widetilde{w_1\ast w_2}, z}(1) = 1\;.
\]
Here $\epsilon$ denotes the sign of a permutation and $\widetilde{w_1\ast w_2}$ denotes the operation described in Section \ref{sect-kl} after taking the partition of $\{1,\ldots,m\}$ into odd and even numbers.

\end{enumerate}

\end{proposition}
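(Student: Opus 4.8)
The plan is to prove the six assertions equivalent through a web of implications built on three blocks: $(1)\Leftrightarrow(5)$, an indicator-theoretic restatement of the conjecture; $(2)\Leftrightarrow(3)\Leftrightarrow(4)\Leftrightarrow(6)$, which recognizes the three explicit representation-theoretic configurations as one and the same combinatorial object and reads it off as the stated Kazhdan-Lusztig identity; and $(1)\Leftrightarrow(2)$, the passage between the general conjecture and Lapid's special case. The last block is the heart of the matter.

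\textbf{Step 1: $(1)\Leftrightarrow(5)$.} I would fix ladders $\pi_1,\pi_2$ with combined multisegment $\gotM^x_{\lambda,\mu}$. By Corollary \ref{cor-main-2} and Corollary \ref{cor-patt} we have $[\pi_1\times\pi_2]=\sum_{w\in\mathcal{I}(\pi_1,\pi_2)}[L(\gotM^w_{\lambda,\mu})]$, each $w\in\mathcal{I}(\pi_1,\pi_2)$ avoiding $321$. Applying the exact functor $\mathbf{r}_{\alpha_\sigma}$ with $\sigma=L(\gotM^w_{\lambda,\mu})$, $w$ avoiding $321$, and extracting the multiplicity of $\sigma_\otimes$ gives
\[
m(\sigma_\otimes,\mathbf{r}_{\alpha_\sigma}(\pi_1\times\pi_2))=\sum_{v\in\mathcal{I}(\pi_1,\pi_2)}m(\sigma_\otimes,\mathbf{r}_{\alpha_\sigma}(L(\gotM^v_{\lambda,\mu}))).
\]
Assuming $(5)$, every term with $v\ne w$ vanishes, while $m(\sigma_\otimes,\mathbf{r}_{\alpha_\sigma}(\sigma))=1$: it is $\geq1$ by the construction of the indicator, and $\leq1$ because $w$ avoids $321$ forces $\wid(\sigma)\leq2$ --- this is Lemma \ref{lem-patt} when $\lambda_m\leq\mu_1$, and in general one first replaces $\mu$ by $\mu+s\,\mathbbm{1}$ for large $s$, which changes neither whether $w$ avoids $321$ nor the width of $L(\gotM^w_{\lambda,\mu})$ (the stretched segments have the same nesting relations) --- so $\sigma$ is a subquotient of a product of two ladders and Proposition \ref{mult-one} with \eqref{maj} applies. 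Hence the right-hand side equals $\mathbbm{1}[\,w\in\mathcal{I}(\pi_1,\pi_2)\,]$, which together with \eqref{maj} and Corollary \ref{cor-patt} is exactly $(1)$. For the converse, $(1)$ makes the displayed identity computable, and applying it to a product of two ladders through which a given $\sigma'=L(\gotM^{w'}_{\check\lambda,\check\mu})$ of $(5)$ factors (such a product exists since $\wid(\sigma')\leq2$) forces the vanishing claimed in $(5)$.

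\textbf{Step 2: $(2)\Leftrightarrow(3)\Leftrightarrow(4)\Leftrightarrow(6)$.} In the configurations of $(2)$ and $(3)$ the four tuples are strictly increasing and strictly interlaced, so $S^\lambda=S^\mu=\{e\}$, $x=e$, $S(\lambda,\mu)=S_{m_1+m_2}$, and the $\ast$-operation of Section \ref{sect-kl} is the one attached to the partition into odd and even positions; Theorem \ref{thm-indp-len} then identifies the coefficients $m_w$ of $(2)$ with those of $(3)$. Each factor $L(\gotM^e_{\lambda^i,\mu^i})$ is a ladder with $S(\lambda^i,\mu^i)=S_{m_i}$, for which the inverse Kazhdan-Lusztig coefficients are $c_{e,w_i}=\epsilon(w_i)$; substituting this into formula \eqref{coeff-form} from the proof of Theorem \ref{thm-indp-len} gives $m_w=\sum_{\widetilde{w_1\ast w_2}\leq w}\epsilon(w_1)\epsilon(w_2)P_{\widetilde{w_1\ast w_2},w}(1)$. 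Since $m_w\in\{0,1\}$ always (Corollary \ref{cor-main-2}) and $m_w=0$ unless $w$ avoids $321$ (Corollary \ref{cor-patt}), the requirement that $\mathcal{I}(\pi_1,\pi_2)$ be \emph{all} $321$-avoiders is verbatim $(6)$; so $(3)\Leftrightarrow(6)$. Finally $(2)$ is the special case of $(4)$ with strict inequalities, giving $(4)\Rightarrow(2)$, and $(2)\Rightarrow(4)$ follows from Corollary \ref{cor-add} and Theorem \ref{thm-indp-len}, which pass between a configuration with coincident coordinates and a strictly-interlaced one; this step is routine.

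\textbf{Step 3: $(1)\Leftrightarrow(2)$ --- the main obstacle.} For $(1)\Rightarrow(2)$: by the identity of Step 1 it suffices to check that in the spread-out configuration of $(2)$ the indicator $\sigma_\otimes$ of \emph{every} $\sigma=L(\gotM^w_{\lambda,\mu})$ with $w$ avoiding $321$ really occurs in $\mathbf{r}_{\alpha_\sigma}(\pi_1\times\pi_2)$; this I would carry out as a finite combinatorial verification using the description of Jacquet modules of ladders from \cite{LapidKret} and the recursion of Proposition \ref{prop-count-mul}, the strict interlacing ensuring that the needed component is present at every step. The reverse $(2)\Rightarrow(1)$ --- equivalently, by Step 1, $(2)\Rightarrow(5)$ --- is where the real work lies: one must bootstrap from the single explicit family of $(2)$ to arbitrary products of two ladders. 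The natural attempt is to match combinatorial data: by Theorem \ref{thm-indp-len} the coefficients $m_w$ depend only on $S^\lambda$, $S^\mu$ and on the inducing permutations (the identity for ladders), so a product of two ladders with pairwise distinct combined left endpoints and pairwise distinct combined right endpoints has the same decomposition as the corresponding configuration of $(2)$/$(3)$, whence $(1)$ in that case; the case of coincident endpoints, where $S^\lambda$ or $S^\mu$ is nontrivial and $S(\lambda,\mu)\subsetneq S_{m_1+m_2}$, must then be reduced to it, presumably via the lower-ideal property of $S(\lambda,\mu)$ and the parabolic reduction relating the $P_{x,w}$ appearing here to ordinary symmetric-group Kazhdan-Lusztig polynomials. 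Controlling the quantities $m(\sigma_\otimes,\mathbf{r}_{\alpha_\sigma}(M(\gotM^v)))$ --- for which only the bound $\geq P_{v,w}(1)$ from \eqref{maj} and the support restriction $v\leq w$ from Proposition \ref{bruhat-prop} come for free --- and verifying that the resulting alternating sum collapses to $\delta_{w,w'}$ precisely when $w$ avoids $321$ is, I expect, the crux of the whole proposition.
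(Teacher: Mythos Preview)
Your web of implications does not close. You correctly establish $(1)\Leftrightarrow(5)$, $(2)\Leftrightarrow(3)\Leftrightarrow(6)$, $(4)\Rightarrow(2)$, and sketch $(1)\Rightarrow(2)$; but to connect the two blocks you offer only $(2)\Rightarrow(4)$ (``routine'' via Corollary~\ref{cor-add} and Theorem~\ref{thm-indp-len}) and the direct $(2)\Rightarrow(1)$ (which you yourself flag as the crux and do not prove). The first of these is not routine: Theorem~\ref{thm-indp-len} compares two configurations with the \emph{same} stabilizers $S^\lambda,S^\mu$, and Corollary~\ref{cor-add} only shifts $\mu$ by a constant. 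A $(4)$-configuration can have $\lambda_i=\lambda_{i+1}$ or $\mu_i=\mu_{i+1}$ (entries coming from the two ladders may coincide), hence nontrivial $S^\lambda$ or $S^\mu$, while in $(2)$ both are trivial. Neither tool bridges that gap, so your Step~2 does not reach $(4)$ and your Step~3 does not reach $(1)$.

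The paper closes the cycle differently, via $(6)\Rightarrow(4)\Rightarrow(5)$, each step short. For $(6)\Rightarrow(4)$: even with nontrivial $S^\lambda,S^\mu$, the interlacing of $(4)$ forces the partitions in the $\ast$-operation to be odd/even, and the standard parabolic identity $P_{\widetilde{w_1\ast w_2},\,z}=P_{w_1\ast w_2,\,z}$ for $z$ longest in its double coset makes formula~\eqref{coeff-form} read verbatim as the left side of $(6)$. For $(4)\Rightarrow(5)$: the existence of a $321$-avoider in $S(\check\lambda,\check\mu)$ rules out triple repeats in $\check\lambda$ and $\check\mu$, so one can split them into two interlacing ladders $\pi_1,\pi_2$ that satisfy the hypotheses of $(4)$; then $(4)$ puts \emph{both} $w$ and $w'$ into $\mathcal I(\pi_1,\pi_2)$, and Proposition~\ref{mult-one} with exactness of $\mathbf r_{\alpha_\sigma}$ forces the vanishing. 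Your own $(1)\Rightarrow(5)$ argument is really a more laborious version of this last step --- you pick $\pi_1,\pi_2$ so that $\sigma'$ appears and then case-split on whether $\sigma$ does --- but you do not extract $(4)\Rightarrow(5)$ from it. Once $(6)\Rightarrow(4)\Rightarrow(5)$ is in place, the ``hard'' direct $(2)\Rightarrow(1)$ you anticipated evaporates: it factors as $(2)\Rightarrow(3)\Rightarrow(6)\Rightarrow(4)\Rightarrow(5)\Rightarrow(1)$.
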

\begin{proof}
\underline{(1) implies (2):}

With the assumptions of (2), let $w\in S_{m_1+m_2}$ be a permutation which avoids $321$. Write $\sigma =L(\gotM^w_{\lambda,\mu})$. By Conjecture \ref{main-conj}, it is enough to show that $m(\sigma_\otimes, \mathbf{r}_{\alpha_\sigma}(\pi_1\times\pi_2))>0$. We will prove it by induction on $m_1+m_2$.

Let us write $\lambda= (\lambda_i)_{i=1}^{m_1+m_2}$ and $\mu= (\mu_i)_{i=1}^{m_1+m_2}$. Since $\lambda_1 = \lambda_1^1 < \lambda_1^2 = \lambda_2$, we can write $\sigma_\otimes = L([\lambda_1, \mu_{w(1)}]) \otimes \sigma'_\otimes$, where $\sigma' = L(\gotM^w_{\lambda,\mu} - [\lambda_1, \mu_{w(1)}])$.

By the formula of \cite{LapidKret}, for $t=1,2$, there exist Jacquet modules $\mathbf{r}_{\alpha_t}(\pi_t)$ for maximal parabolic subgroups which contain as irreducible factors the representations $\pi'_t\otimes \pi''_t$, respectively, where
\[
\pi'_1 =L\left( [\lambda_1^1, \mu_1^1]+ \sum_{2\leq i\,:\, \mu_i^1\leq \mu_{w(1)} } [\mu^2_{i-1}+1, \mu^1_i]  \right) , \quad \pi''_1 = L\left( \sum_{2\leq i\,:\, \mu_i^1\leq \mu_{w(1)} } [\lambda_i^1, \mu^2_{i-1}] + \sum_{i\,:\, \mu_i^1> \mu_{w(1)}}  [\lambda_i^1, \mu^1_i] \right)
\;,
\]
\[
\pi'_2 = L\left( \sum_{ i\,:\, \mu_i^2\leq \mu_{w(1)} } [\mu^1_{i}+1, \mu^2_i]  \right) , \quad \pi''_2 = L\left( \sum_{i\,:\, \mu_i^2\leq \mu_{w(1)} } [\lambda_i^2, \mu^1_i] + \sum_{i\,:\, \mu_i^2> \mu_{w(1)}}  [\lambda_i^2, \mu^2_i] \right)
\;.
\]
Note, that $\pi'_1$ and $\pi'_2$ are generic irreducible representations. In particular, $[\pi'_1\times \pi'_2]$ is a product of a sequence of segment representations in $\mathcal{R}$. More precisely, $[\pi'_1\times \pi'_2] = \prod_{i=1}^{w(1)}[L(\Delta_i)]$, with $b(\Delta_i) = e(\Delta_{i-1})+1$, for all $2\leq i\leq w(1)$, $b(\Delta_1) = \lambda_1^1 = \lambda_1$ and $e(\Delta_{w(1)}) = \mu_{w(1)}$. Hence, $m(L([\lambda_1,\mu_{w(1)}]), \pi'_1\times \pi'_2)>0$.

We leave the reader the verification that one of the ordered pairs of ladder representations, $(\pi''_1, \pi''_2)$ or $(\pi''_2, \pi''_1)$, satisfies again the assumptions of (2). In particular, by the induction assumption we have $m(\sigma'_\otimes, \mathbf{r}_{\alpha_{\sigma'}}(\pi''_1\times\pi''_2))>0$.

Thus, by applying Proposition \ref{prop-count-mul} we see that $m(\sigma_\otimes, \mathbf{r}_{\alpha_\sigma}(\pi_1\times\pi_2))>0$ holds as desired.
\\ \\
\underline{(2) implies (3):}
The conditions of (3) are a special case of the conditions of (2).
\\ \\
\underline{(3) implies (6):}

Let $\pi_1, \pi_2$ be the ladder representations with the assumptions of (3). Recall the ``determinantal" identity for ladder representations, which states that
\[
[\pi_t] = \sum_{w\in S_{m_t}} \epsilon(w) [M(\gotM^w_{\lambda_t,\mu_t})]\;,
\]
for $t=1,2$ (See e.g. \cite{LM} or \cite{Hender}).

Now, for any $z\in S_{m_1+m_2}$ which avoids $321$, the desired identity follows from the formula \eqref{coeff-form} in the proof of Theorem \ref{thm-indp-len} and the assumption $m(L(\gotM^z_{\lambda,\mu}), \pi_1\times\pi_2)=1$, that is given by (3).
\\ \\
\underline{(6) implies (4):}
For two permutations $w_t\in S(\lambda^t,\mu^t)$, $t=1,2$, recall the operations $\widetilde{w_1\ast w_2}\in Q(\lambda,\mu)$ and $w_1\ast w_2 \in S(\lambda,\mu)$ defined in Section \ref{sect-kl}. Note, that from the assumptions of (4), the partitions involved in these operations must divide $\{1,\ldots,m\}$ into odd and even numbers.

Since any $z\in S(\lambda,\mu)$ is the longest element in its corresponding double-coset of $S_{m_1+m_2}$, we have the known identity $P_{\widetilde{w_1\ast w_2},\,z} = P_{w_1\ast w_2,\, z}$ of Kazhdan-Lusztig polynomials (see \cite[Corollary 7.14]{humph-cox}).

Therefore, as before, formula \eqref{coeff-form} in the proof of Theorem \ref{thm-indp-len} (and the remark following it) implies that the identities of (6) amount to stating the equalities $m(L(\gotM^z_{\lambda,\mu}), \pi_1\times\pi_2)=1$, for all $z\in S(\lambda,\mu)$ which avoid $321$.
\\ \\
\underline{(4) implies (5):}
Let $\check{\lambda}= (\check{\lambda}_i),\check{\mu}=(\check{\mu}_i)\in \mathcal{P}_m$ be given. Suppose also that $w,w'\in S(\check{\lambda},\check{\mu})$ are given which avoid $321$. The fact that such permutations exist imply that there are no equalities of the form $\check{\lambda}_i = \check{\lambda}_{i+1} = \check{\lambda}_{i+2}$ or $\check{\mu}_i = \check{\mu}_{i+1} = \check{\mu}_{i+2}$, for any $i$.

Thus, we clearly can define ladder representations $\pi_1,\pi_2$ which satisfy all assumptions of (3), and for which $\check{\lambda} = \lambda$ and $\check{\mu}=\mu$.
 Write $\sigma,\sigma'$ as in (5).  From Proposition \ref{mult-one}, we know that $m(\sigma_\otimes, \mathbf{r}_{\alpha_\sigma}(\pi_1\times \pi_2))\leq 1$. By exactness of the Jacquet functor, we have
\[
1\geq m(\sigma_\otimes, \mathbf{r}_{\alpha_\sigma}(\pi_1\times \pi_2)) = \sum_{ z\in \mathcal{I}(\pi_1,\pi_2)} m(\sigma_\otimes, \mathbf{r}_{\alpha_\sigma}(L(\gotM^z_{\lambda,\mu})))\;.
\]
The statement of (5) now follows from (4) and the fact that $m(\sigma_\otimes, \mathbf{r}_{\alpha_\sigma}(\sigma))=1$.
\\ \\
\underline{(5) implies (1):}
One direction of Conjecture \ref{main-conj} follows immediately from Corollary \ref{cor-patt}, Proposition \ref{mult-one} and inequality \eqref{maj}.

Conversely, suppose that $m(\sigma_\otimes, \mathbf{r}_{\alpha_\sigma}(\pi_1\times\pi_2)) =1$, for $\sigma = L(\gotM^w_{\lambda,\mu})$ with $w\in S(\lambda,\mu)$ which avoids $321$. We assume the notations of Conjecture \ref{main-conj} here.

From exactness of $\mathbf{r}_{\alpha_\sigma}$, there is $\sigma'\in \irr$ with $m(\sigma',\pi_1\times \pi_2)>0$ and $m(\sigma_\otimes, \mathbf{r}_{\alpha_\sigma}(\sigma'))=1$. From Corollary \ref{cor-patt} we know that $\sigma' = L(\gotM^{w'}_{\lambda,\mu})$ for some $w'\in S(\lambda,\mu)$ which avoids $321$. Finally, by (5) $w = w'$ and $\sigma \cong \sigma'$.

\end{proof}

Let us finish by mentioning again that the validity of the identities appearing in Proposition \ref{prop-conj}(5) were verified by a computer program for many low rank cases (all cases of $n\leq 11$). In fact, such computations have initially led Erez Lapid to formulate a version of the conjectures of this section which he conveyed to the author.

\section{Smooth multisegments}\label{sect-smooth}
We would like to prove Conjecture \ref{main-conj} for a certain family of special cases. These involve representations $\sigma\in \irr_{\langle \rho \rangle}$ which are given as $\sigma = L(\gotM^w_{\lambda,\mu})$ for a permutation $w\in S(\lambda,\mu)$, which satisfies $m(L(\gotM^w_{\lambda,\mu}),M(\gotM^e_{\lambda,\mu}))=1$. By formula \eqref{kl-pol} in Section \ref{sect-kl}, this condition amounts to the Kazhdan-Lusztig polynomial identity $P_{e,w}\equiv 1$. In geometric terms, the condition is that the Schubert variety associated to the permutation $w$ is smooth.

In \cite{lak-san}, a combinatorial criterion for detecting smooth permutations was developed. It was shown that a permutation $w\in S_n$ satisfies $P_{e,w}\equiv 1$, if and only if, it avoids both $3412$ and $4231$. It clearly follows that among the $321$-avoiding permutations, those which satisfy the smooth condition are precisely those which avoid $3412$ as well. In fact, our interest in this section is solely on the combinatorial condition of $321$ and $3412$ avoidance.

Curiously, this collection of permutations was studied in \cite{fan-smooth}. Fan identified $321$ and $3412$ avoiding permutations as those which are given by a product of distinct simple transpositions. Their cardinality in the group $S_n$ was shown to be given by $F_{2n-1}$, where $F_k$ stands for the $k$-th Fibonacci number. Let us mention in that context the well known fact that the number of all $321$ avoiding permutations in $S_n$ (represents here the number of all cases of Conjecture \ref{main-conj} needed to be verified) is given by the $n$-th Catalan number.

\begin{lemma}\label{lem-simp-comb}
Suppose that $w\in S_n$ is a permutation that avoids both $321$ and $3412$. If $k=w(1)$, then for all $2\leq i\leq k-1$, $w(i) = i-1$.
\end{lemma}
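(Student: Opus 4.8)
The plan is to track the positions of the small values $1, 2, \ldots, k-1$ under the hypothesis $w(1)=k$; I may assume $k\geq 3$, since otherwise the asserted range of indices is empty. For $1\leq j\leq k-1$ let $p_j$ denote the position with $w(p_j)=j$. First I would note that each $p_j\geq 2$ (as $w(1)=k\neq j$) and that $p_1<p_2<\cdots<p_{k-1}$: if instead $p_a>p_b$ for some $a<b$, then the values at the positions $1<p_b<p_a$ are $k>b>a$, a forbidden $321$ pattern. In particular $p_j\geq j+1$ for every $j\leq k-1$, and the lemma is equivalent to the equalities $p_j=j+1$ for $1\leq j\leq k-2$.

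I would then argue by contradiction. Suppose some such equality fails and let $j_0$ be the least index with $1\leq j_0\leq k-2$ and $p_{j_0}\geq j_0+2$. Minimality forces $p_j=j+1$ for all $j<j_0$, so the values $1,\dots,j_0-1$ occupy positions $2,\dots,j_0$, while $j_0,\dots,k-1$ occupy the positions $p_{j_0}<\cdots<p_{k-1}$, all of which exceed $j_0+1$. Hence position $j_0+1$ carries none of $1,\dots,k-1$, and it is not the value $w(1)=k$ either, so $w(j_0+1)\geq k+1$.

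Finally I would read off the forbidden pattern: since $j_0\leq k-2$, the value $j_0+1$ also lies among $1,\dots,k-1$, at a position $p_{j_0+1}>p_{j_0}$. The four positions $1<j_0+1<p_{j_0}<p_{j_0+1}$ then carry the values $k,\ w(j_0+1),\ j_0,\ j_0+1$, and from $j_0<j_0+1<k<k+1\leq w(j_0+1)$ their relative order is $3,4,1,2$, a $3412$ pattern, contradicting the hypothesis on $w$. This forces $p_j=j+1$ for all $j\leq k-2$, i.e. $w(i)=i-1$ for $2\leq i\leq k-1$. I do not anticipate a real obstacle; the one spot that needs care is the step identifying $w(j_0+1)\geq k+1$, which is precisely where both the strictly increasing arrangement of the small values and the minimality of $j_0$ get used.
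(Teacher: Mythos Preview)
Your proof is correct and follows essentially the same strategy as the paper's: use $321$-avoidance to force the small values into increasing order, then extract a $3412$ pattern from any deviation from $w(i)=i-1$. The only difference is organizational---you track the positions $p_j=w^{-1}(j)$ of the small values while the paper tracks the values $w(i)$ at the early positions---but the resulting $3412$ witness (position~$1$ with value~$k$, a nearby position with value $>k$, and two later positions carrying small values) is the same in both arguments.
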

\begin{proof}
Note, that because of the $321$ pattern avoidance we cannot have $w(j)<w(i)< k=w(1)$ for any $i<j$. 

Now suppose that $w(i_1)>k$ for a certain index $1< i_1\leq k-1$. Since the interval $(1,i_1)$ contains at most $k-3$ integers, there must be two indices $i_1< i_2<i_3$ for which $w(i_2),w(i_3) < k$. In particular, as seen above, $w(i_2)<w(i_3)$. The sequence $1<i_1<i_2< i_3$ then supplies a $3412$ pattern.

Thus, $w(i)< k$ for all $2\leq i\leq k-1$. In combination with the first observation of the proof, the statement follows.

\end{proof}



\begin{proposition}\label{prop-mulone}
Let $\lambda, \mu\in \mathcal{P}_m$ and $w\in S(\lambda,\mu)$ be given. Suppose that $w$ avoids both $321$ and $3412$. Then, for every $x\in S(\lambda, \mu)$,
\[
m(\pi_{\otimes}, \mathbf{r}_{\alpha_\pi}(M(\gotM^x_{\lambda,\mu}))\leq 1
\]
holds, where $\pi = L(\gotM^w_{\lambda,\mu})$.

\end{proposition}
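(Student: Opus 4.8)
The plan is to prove the statement by induction on the number $m$ of segments in $\gotM^w_{\lambda,\mu}$, the case $m\leq 1$ being immediate (there $\mathbf{r}_{\alpha_\pi}$ is the identity functor and $\pi_\otimes=\pi$). Fix $x\in S(\lambda,\mu)$; by Proposition \ref{bruhat-prop} we may assume $x\leq w$, and by comparing supercuspidal supports we may assume $\supp(\gotM^x_{\lambda,\mu})=\supp(\gotM^w_{\lambda,\mu})$, since otherwise $m(\pi_\otimes,\mathbf{r}_{\alpha_\pi}(M(\gotM^x_{\lambda,\mu})))=0$. Write $\gotM=\gotM^w_{\lambda,\mu}$, let $\lambda_1=\cdots=\lambda_l<\lambda_{l+1}$ be the first block of $\lambda$ (so $l\leq 2$, as $w$ avoids $321$), and let $S_1=\{\Delta\in\seg_{\langle\rho\rangle}:b(\Delta)=\lambda_1\}$. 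Since $w$ is longest in $wS^\lambda$ we have $w(1)>\cdots>w(l)$, so $\gotM\cdot\mathbbm{1}_{S_1}=\sum_{i=1}^{l}[\lambda_1,\mu_{w(i)}]$; all its segments share the left endpoint $\lambda_1$, hence $L(\gotM\cdot\mathbbm{1}_{S_1})$ is generic. Put $\pi'=L(\gotM-\gotM\cdot\mathbbm{1}_{S_1})$, so that $\pi'=L(\gotM^{w'}_{\lambda',\mu'})$ with $\lambda'=(\lambda_{l+1},\dots,\lambda_m)$, $\mu'=\mu-(\mu_{w(1)},\dots,\mu_{w(l)})$ and $w'$ the flattening of $w$ obtained by deleting the first $l$ positions together with the values $w(1),\dots,w(l)$. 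I would check that $w'\in S(\lambda',\mu')$ and, since flattening cannot introduce a new pattern, that $w'$ again avoids $321$ and $3412$; hence the induction hypothesis is available for $\pi'$.

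The next step is to apply Proposition \ref{prop-count-mul} with the product taken to be the standard module $M(\gotM^x_{\lambda,\mu})=L([\lambda_1,\mu_{x(1)}])\times\cdots\times L([\lambda_m,\mu_{x(m)}])$, writing $\mathcal{J}=\mathcal{J}(L([\lambda_1,\mu_{x(1)}]),\dots,L([\lambda_m,\mu_{x(m)}]))$ and $\pi_\otimes=L(\gotM\cdot\mathbbm{1}_{S_1})\otimes\pi'_\otimes$. This gives
\[
m\big(\pi_\otimes,\mathbf{r}_{\alpha_\pi}(M(\gotM^x_{\lambda,\mu}))\big)=\sum_{s\in\mathcal{J}}m\big(L(\gotM\cdot\mathbbm{1}_{S_1}),\tau^s\big)\cdot m\big(\pi'_\otimes,\mathbf{r}_{\alpha_{\pi'}}(\delta^s)\big).
\]
Because $L(\gotM\cdot\mathbbm{1}_{S_1})$ is generic, Proposition \ref{prop-gen} forces $m(L(\gotM\cdot\mathbbm{1}_{S_1}),\tau^s)\in\{0,1\}$; let $T\subseteq\mathcal{J}$ be the subset on which it equals $1$. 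For $s\in T$ one applies Lemma \ref{lem-tech} (with $x$ in the role of its ``$w$'' and $L(\gotM\cdot\mathbbm{1}_{S_1})$ in the role of its ``$\pi$''): since the multiset of right endpoints of $\gotM\cdot\mathbbm{1}_{S_1}$ is exactly $\{\mu_{w(1)},\dots,\mu_{w(l)}\}$, the data $\check\lambda,\check\mu$ produced by the lemma coincide with $\lambda',\mu'$, so $[\delta^s]=[M(\gotM^{x^s}_{\lambda',\mu'})]$ for some $x^s\in S(\lambda',\mu')$, and moreover $x^s$ agrees with $x$ on every position $i$ with $\lambda_i>\lambda_1$ and $\mu_{x(i)}>\mu_{w(1)}$. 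The induction hypothesis then yields $m(\pi'_\otimes,\mathbf{r}_{\alpha_{\pi'}}(M(\gotM^{x^s}_{\lambda',\mu'})))\leq 1$ for every $s\in T$, whence
\[
m\big(\pi_\otimes,\mathbf{r}_{\alpha_\pi}(M(\gotM^x_{\lambda,\mu}))\big)=\sum_{s\in T}m\big(\pi'_\otimes,\mathbf{r}_{\alpha_{\pi'}}(M(\gotM^{x^s}_{\lambda',\mu'}))\big),
\]
a sum of terms each at most $1$.

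It remains to show that at most one $s\in T$ contributes a nonzero summand; this is the heart of the matter and the step I expect to demand the most care. The summands with $m(\pi'_\otimes,\mathbf{r}_{\alpha_{\pi'}}(M(\gotM^{x^s}_{\lambda',\mu'})))\neq 0$ are heavily constrained: first, $\supp(\delta^s)$ must equal $\supp(\pi')$, which — together with $\supp(\tau^s)=\supp(L(\gotM\cdot\mathbbm{1}_{S_1}))$ and our standing assumption $\supp(\gotM^x)=\supp(\gotM^w)$ — means that the nonempty segments $[c_i+1,\mu_{x(i)}]$ occurring in $\tau^s$ tile the interval $[\lambda_1,\mu_{w(1)}]$ and nothing else; second, by Proposition \ref{bruhat-prop} applied to $\pi'$ one needs $x^s\leq w'$; third, $x^s$ is already pinned to $x$ on the ``outer'' positions by the moreover-clause of Lemma \ref{lem-tech}. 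The remaining ambiguity lives entirely on the positions $i$ with $\lambda_i=\lambda_1$ or $\mu_{x(i)}\leq\mu_{w(1)}$, and this is precisely where the $3412$-avoidance of $w$ enters: by Lemma \ref{lem-simp-comb}, with $k=w(1)$ the permutation $w$ is forced into the staircase $w(2)=1,\ w(3)=2,\dots,w(k-1)=k-2$, so $\gotM$ — and hence the reduced datum $\gotM^{w'}_{\lambda',\mu'}$ — has a rigid, essentially ladder-shaped structure on this stretch. Combining this rigidity with the tiling condition defining $T$, I expect to conclude that the tuple $s$ giving a nonzero contribution is uniquely determined, so that the displayed sum is at most $1$, completing the induction. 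Reconciling the tiling that peels off the generic piece $L(\gotM\cdot\mathbbm{1}_{S_1})$ with the staircase forced by Lemma \ref{lem-simp-comb} is the main obstacle, and is the point at which the hypothesis that $w$ avoids $3412$ (beyond merely avoiding $321$) is genuinely used.
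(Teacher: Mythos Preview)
Your approach is essentially identical to the paper's: induction on $m$, Proposition~\ref{prop-count-mul} to split off the first generic block $L(\gotM\cdot\mathbbm{1}_{S_1})$, Proposition~\ref{prop-gen} and Lemma~\ref{lem-tech} to control the summands, and then Lemma~\ref{lem-simp-comb} together with Proposition~\ref{bruhat-prop} for uniqueness of the contributing $s$.

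The one place you leave as ``I expect to conclude'' is exactly where the paper gives a clean counting argument, and it is worth recording since you flag it as the main obstacle. Write $k=w(1)$. The staircase $w(2)=1,\dots,w(k-1)=k-2$ from Lemma~\ref{lem-simp-comb} makes the flattened permutation $w'$ equal to the identity on its first $k-1-l$ positions; since you need $x^s\leq w'$ in Bruhat order (Proposition~\ref{bruhat-prop}), this forces $x^s$ to be the identity there as well (any permutation Bruhat-below one that fixes $1,\dots,r$ pointwise must also fix $1,\dots,r$). On the other side, $w\in S(\lambda,\mu)$ gives $\mu_k<\mu_{k+1}$ (or $k=m$), so the ``moreover'' clause of Lemma~\ref{lem-tech} simplifies to $\underline{x^s}(i)=x(i)$ whenever $x(i)>k$. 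These two constraints together determine $(\underline{x^s})^{-1}$ on $\{l,\dots,k-2\}\cup\{k+1,\dots,m\}$, i.e.\ on all of its range except the single value $k-1$, and bijectivity then fixes that last value. Hence $x^s$, and therefore the contributing $s$, is unique. (The paper also separates off the degenerate case $\mu_{w(1)}=\lambda_1-1$ before the main step; your treatment absorbs it, which is harmless.)
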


\begin{proof}
We prove by induction on $m$.


Suppose first that $\mu_{w(1)}= \lambda_1-1$. Since $\mu_{w(1)} = \mu_{x(i')}$ for some $i'$, we must have $\lambda_{i'}= \lambda_1$. Thus, $\gotM^w_{\lambda,\mu},\gotM^x_{\lambda,\mu}$ can also be described as $\gotM^{\check{w}}_{\check{\lambda},\check{\mu}},\gotM^{\check{x}}_{\check{\lambda},\check{\mu}}$, respectively, where $\check{\lambda} = (\lambda_2,\ldots,\lambda_m) = \lambda - (\lambda_{i'})$, $\check{\mu}=\mu-(\mu_{w(i')})$, and $w,x$ are naturally flattened into $\check{w},\check{x}\in S(\check{\lambda},\check{\mu})$. Note that $\check{w}$ still satisfies the pattern avoidance condition, which allows for an application of the induction hypothesis.

Otherwise, let us assume $\lambda_1\leq \mu_{w(1)}$, and write the non-trivial representation $\sigma= L(\sum_{i=1}^l [\lambda_1, \mu_{w(i)}])$, where $l=1$ if $\lambda_1< \lambda_2$ or $l=2$ if $\lambda_1= \lambda_2$. Since $w\in S(\lambda,\mu)$ avoids $321$, it is easy to see that $\lambda_l < \lambda_{l+1}$. Hence, we can write $(\pi)_\otimes = \sigma \otimes (\pi')_\otimes$, for
\[
\pi' = L( \gotM^{\tilde{w}}_{\tilde{\lambda}, \tilde{\mu}})\;,\quad \tilde{\lambda} = \lambda - (\lambda_i)_{i=1}^l,\; \tilde{\mu} = \mu - (\mu_{w(i)})_{i=1}^l\,,
\]
with $w$ being flattened into $\tilde{w}\in S(\tilde{\lambda},\tilde{\mu})$. Note again that $\tilde{w}$ still avoids both $321$ and $3412$.

To avoid confusion, for a permutation $y \in S(\tilde{\lambda},\tilde{\mu})\subset S_{m-l}$, we will write $\underline{y}$ for the natural bijection
\[
\underline{y}:\{l+1,\ldots,m\}\to \{1,\ldots, m\}\setminus\{w(i)\}_{i=1}^l
\]
it represents when recalling the construction of $(\tilde{\lambda},\tilde{\mu})$ out of $(\lambda,\mu)$. In these terms $\underline{\tilde{w}}$ is the restriction of $w$.

By Proposition \ref{prop-count-mul} the statement will follow once we have established the following three claims:
\begin{enumerate}

\item\label{cond-1} For all $s\in \mathcal{J} :=\mathcal{J}(L([\lambda_1,\mu_{x(1)}]),\ldots,L([\lambda_m,\mu_{x(m)}]))$,
\[
m(\sigma, \tau^s)\leq1\;.
\]
\item\label{cond-2} If $m(\sigma, \tau^s)>0$ for some $s\in \mathcal{J}$, then $m((\pi')_\otimes, \delta^s)\leq 1$.

\item\label{cond-3} There is at most one $s\in \mathcal{J}$ for which $m(\sigma, \tau^s)m((\pi')_\otimes, \delta^s)>0$.

\end{enumerate}
The representation $\sigma$ is generic, since it is defined by pairwise unlinked segments. So, claim \eqref{cond-1} follows from Proposition \ref{prop-gen}.

Now, assume that $s\in \mathcal{J}$ is such that $m(\sigma, \tau^s)>0$. By Lemma \ref{lem-tech}, $[\delta^s] = [M(\gotM^{\tilde{x}}_{\tilde{\lambda},\tilde{\mu}})]$ for a certain $\tilde{x}\in S(\tilde{\lambda},\tilde{\mu})$. Claim \eqref{cond-2} now follows from the induction hypothesis.

Let us write $k=w(1)$. From the pattern avoidance condition we know by Lemma \ref{lem-simp-comb} that $w(i)= i-1$ for all $2\leq i\leq k-1$. This also means $\tilde{w}(i) =i $ for all $1\leq i\leq k-1-l$.

In particular $w(1)>w(2)$.

Recall that Lemma \ref{lem-tech} states further that $\underline{\tilde{x}}(i) = x(i)$ for all $i$ such that $\mu_{x(i)}> \mu_k$.  It follows from $w(1) = k$ and definition of $S(\lambda,\mu)$ that either $\mu_k < \mu_{k+1}$ or $k=m$. Thus, we know that $\underline{\tilde{x}}(i) = x(i)$ for all $i$ such that $x(i) > k$.

It is clear that $s$ is determined by $\tilde{x}$. In order to establish claim \eqref{cond-3} we are left to show there is at most one $\tilde{x}$ satisfying the conditions imposed so far and for which $m((\pi')_\otimes,M(\gotM^{\tilde{x}}_{\tilde{\lambda},\tilde{\mu}}))$ is positive.

By Proposition \ref{bruhat-prop} such $\tilde{x}$ must satisfy $\tilde{w}\geq \tilde{x}$ in the Bruhat order of $S_{m-l}$. Therefore, $\tilde{x}(i) = i$ for all $1\leq i\leq k-1-l$. In other words, $\underline{\tilde{x}}(i) = i-1$ for all $l+1\leq i \leq k-1$. We see that the values of $\underline{\tilde{x}}^{-1}$ are determined on $\{l,\ldots, k-2\}$ and on $\{k+1,\ldots,m\}$ by the prescribed conditions. Since $\underline{\tilde{x}}$ is a bijection, the last value of $\underline{\tilde{x}}^{-1}$ (on $k-1$) is determined as well. Therefore, condition \eqref{cond-3} follows from uniqueness of $\tilde{x}$.

\end{proof}

\begin{remark}
Incidentally, one can use Proposition \ref{prop-mulone} to reprove directly part of the Lakshmibai-Sandhya criterion for smoothness, in the following sense. We can pick $\lambda,\mu \in \mathcal{P}_m$ for which $S(\lambda,\mu)= S_m$. Then, by Proposition \ref{prop-mulone} and inequality \eqref{maj}, for a given $w\in S_m$ which avoids $321$ and $3412$, $m(L(\gotM^w_{\lambda,\mu}),M(\gotM^e_{\lambda,\mu}))\leq 1$ will hold. As explained at the head of this section this must imply that the associated Schubert variety to $w$ is smooth.
\end{remark}

\begin{theorem}\label{thm-smooth}
Suppose that $\pi_1=L(\gotM_1),\ldots, \pi_k=L(\gotM_k)$ are representations in $\irr_{\langle \rho \rangle}$. Suppose that $\gotM_1+ \ldots +\gotM_k = \gotM^x_{\lambda,\mu}$ for some $\lambda,\mu\in \mathcal{P}_m$ and $x\in S(\lambda,\mu)$.

Then, for all $\sigma= L(\gotM^w_{\lambda,\mu})$ with $w\in S(\lambda,\mu)$ which avoids both $321$ and $3412$,
\[
m(\sigma, \pi_1\times\cdots \times\pi_k)= m(\sigma_\otimes, \mathbf{r}_{\alpha_\sigma}(\pi_1\times\cdots\times\pi_k))\in\{0,1\}\;.
\]

\end{theorem}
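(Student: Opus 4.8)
The plan is to compute the right-hand side $m(\sigma_\otimes,\mathbf{r}_{\alpha_\sigma}(\pi_1\times\cdots\times\pi_k))$ explicitly, recognize it as the left-hand side, and bound both by $1$. Write $\sigma=L(\gotM^{w_0}_{\lambda,\mu})$, so $w_0\in S(\lambda,\mu)$ avoids $321$ and $3412$. Everything rests on the identity
\[
m\bigl(\sigma_\otimes,\ \mathbf{r}_{\alpha_\sigma}(L(\gotM^w_{\lambda,\mu}))\bigr)=\delta_{w,w_0}\qquad(w\in S(\lambda,\mu)),
\]
which I obtain by first passing to \emph{standard} modules, where Propositions \ref{prop-mulone} and \ref{bruhat-prop} give full control, and then inverting the Kazhdan--Lusztig transition matrix.

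First I would establish the analogous statement for standard modules: for $v\in S(\lambda,\mu)$, the multiplicity $m(\sigma_\otimes,\mathbf{r}_{\alpha_\sigma}(M(\gotM^v_{\lambda,\mu})))$ is $1$ when $v\le w_0$ and $0$ otherwise. Vanishing when $v\not\le w_0$ is exactly Proposition \ref{bruhat-prop}. When $v\le w_0$, formula \eqref{kl-pol} (with $P_{v,w_0}(1)\ge1$) exhibits $\sigma$ as a subquotient of $M(\gotM^v_{\lambda,\mu})$, so by exactness of $\mathbf{r}_{\alpha_\sigma}$ and the bound $m(\sigma_\otimes,\mathbf{r}_{\alpha_\sigma}(\sigma))\ge1$ (the adjunction used to define the indicator) the multiplicity is $\ge1$, while Proposition \ref{prop-mulone} bounds it by $1$.

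Next I would bring in smoothness. Since $w_0$ avoids $321$ it also avoids $4231$ (the positions of the values $4,3,1$ in a $4231$-pattern already form a $321$-pattern), so $w_0$ avoids both $4231$ and $3412$; by the Lakshmibai--Sandhya criterion \cite{lak-san} the Schubert variety attached to $w_0$ is smooth, equivalently $P_{v,w_0}\equiv1$ for all $v\le w_0$. Hence the multiplicities of the previous paragraph are exactly the numbers $P_{v,w_0}(1)$, and applying the exact functor $\mathbf{r}_{\alpha_\sigma}$ and the multiplicity count to \eqref{kl-pol-inv}, then using that $\{c_{x,w}\}$ is inverse to $\{P_{x,w}(1)\}$ in the incidence algebra of $(S(\lambda,\mu),\le)$, yields
\[
m\bigl(\sigma_\otimes,\ \mathbf{r}_{\alpha_\sigma}(L(\gotM^w_{\lambda,\mu}))\bigr)=\sum_{w\le v\in S(\lambda,\mu)}c_{w,v}\,P_{v,w_0}(1)=\delta_{w,w_0},
\]
which is the desired identity.

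To conclude, apply $\mathbf{r}_{\alpha_\sigma}$ to the decomposition $[\pi_1\times\cdots\times\pi_k]=\sum_{x\le w\in S(\lambda,\mu)}m_w[L(\gotM^w_{\lambda,\mu})]$ of Proposition \ref{gen-decomp} and count occurrences of $\sigma_\otimes$: all terms vanish except $w=w_0$, so $m(\sigma_\otimes,\mathbf{r}_{\alpha_\sigma}(\pi_1\times\cdots\times\pi_k))=m_{w_0}=m(\sigma,\pi_1\times\cdots\times\pi_k)$. Finally, each $\pi_i$ is a quotient of $M(\gotM_i)$, so by exactness of parabolic induction $\pi_1\times\cdots\times\pi_k$ is a quotient of $M(\gotM_1)\times\cdots\times M(\gotM_k)$, whose class in $\mathcal{R}$ is $[M(\gotM^x_{\lambda,\mu})]$; therefore $m(\sigma_\otimes,\mathbf{r}_{\alpha_\sigma}(\pi_1\times\cdots\times\pi_k))\le m(\sigma_\otimes,\mathbf{r}_{\alpha_\sigma}(M(\gotM^x_{\lambda,\mu})))\le1$ by the first step (equivalently, directly by Proposition \ref{prop-mulone}). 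The only genuinely delicate ingredient is the standard-module computation, and that is already done in Propositions \ref{prop-mulone} and \ref{bruhat-prop}; the rest is bookkeeping with the incidence algebra, the smoothness hypothesis serving only to turn $P_{v,w_0}(1)$ into the indicator of $v\le w_0$ so that the inversion collapses to a Kronecker delta.
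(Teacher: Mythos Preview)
Your proof is correct. It rests on the same two core ingredients as the paper's argument---Proposition~\ref{bruhat-prop} and Proposition~\ref{prop-mulone}---to pin down $m(\sigma_\otimes,\mathbf{r}_{\alpha_\sigma}(M(\gotM^v_{\lambda,\mu})))=[v\le w_0]$, but then finishes along a slightly different route. You invoke the Lakshmibai--Sandhya criterion to recognise this indicator function as the vector $(P_{v,w_0}(1))_v$, and then push through the incidence-algebra inversion \eqref{kl-pol-inv} to obtain the clean identity $m(\sigma_\otimes,\mathbf{r}_{\alpha_\sigma}(L(\gotM^w_{\lambda,\mu})))=\delta_{w,w_0}$ for \emph{every} $w\in S(\lambda,\mu)$. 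The paper instead avoids any appeal to smoothness or to the coefficients $c_{x,w}$: once $m(\sigma_\otimes,\mathbf{r}_{\alpha_\sigma}(M(\gotM^x_{\lambda,\mu})))\le1$ is known, additivity of the Jacquet functor forces at most one irreducible constituent $\theta$ of $M(\gotM^x_{\lambda,\mu})$ to satisfy $m(\sigma_\otimes,\mathbf{r}_{\alpha_\sigma}(\theta))>0$; since $\sigma$ itself is such a constituent whenever $x\le w_0$, any competing $\theta$ is ruled out by a two-line contradiction. Your route yields a slightly stronger intermediate statement and makes the role of smoothness explicit; the paper's route is more self-contained---indeed, the remark following Proposition~\ref{prop-mulone} points out that its argument \emph{recovers} part of the Lakshmibai--Sandhya criterion rather than relying on it.
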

\begin{proof}
Suppose that $\sigma= L(\gotM^w_{\lambda,\mu})$ with $w\in S(\lambda,\mu)$ which avoids both $321$ and $3412$.

As in the proof of Proposition \ref{gen-decomp}, $\pi_1\times\cdots\times\pi_k$ is a quotient of $M(\gotM_1)\times\cdots\times M(\gotM_k)$. Recall that $[M(\gotM_1)\times\cdots\times M(\gotM_k)] = [M(\gotM^x_{\lambda,\mu})]$, which means
\[
m(\sigma_\otimes, \mathbf{r}_{\alpha_\sigma}(\pi_1\times\cdots\times\pi_k)) \leq  m(\sigma_\otimes, \mathbf{r}_{\alpha_\sigma}(M(\gotM^x_{\lambda,\mu})))\leq 1\;
\]
by Proposition \ref{prop-mulone}.

Suppose now that $m(\sigma_\otimes, \mathbf{r}_{\alpha_\sigma}(\pi_1\times\cdots\times\pi_k))=1$. By exactness of the Jacquet functor, $m(\sigma_\otimes, \mathbf{r}_{\alpha_\sigma}(\theta))=1$ for a certain $\theta \in \irr$ such that $m(\theta, \pi_1\times\cdots\times\pi_k)>0$. If $\theta\cong \sigma$ we are done.

Assume that $\theta\not\cong \sigma$. As before, we know that $ m(\sigma_\otimes, \mathbf{r}_{\alpha_\sigma}(M(\gotM^x_{\lambda,\mu})))>0$. Hence, by Proposition \ref{bruhat-prop} we have $x\leq w$ and $\sigma$ is a subquotient of $M(\gotM^x_{\lambda,\mu})$. Since $\theta$ is also a subquotients of $M(\gotM^x_{\lambda,\mu})$, we get a contradiction from
\[
2\leq  m(\sigma_\otimes, \mathbf{r}_{\alpha_\sigma}(\theta)) + m(\sigma_\otimes, \mathbf{r}_{\alpha_\sigma}(\sigma))\leq m(\sigma_\otimes, \mathbf{r}_{\alpha_\sigma}(M(\gotM^x_{\lambda,\mu})))\;.
\]

\end{proof}

\begin{corollary}\label{cor-smooth}
Let $\pi_1=L(\gotM^e_{\lambda^1,\mu^1}), \pi_2=L(\gotM^e_{\lambda^2,\mu^2})\in \irr_{\langle \rho \rangle}$ be ladder representations, for some $\lambda^i = (\lambda^i_1< \ldots < \lambda^i_{m_i}), \mu^i = (\mu^i_1 <\ldots < \mu^i_{m_i}) \in \mathcal{P}_{m_i}$, $i=1,2$. Write $\lambda = \lambda^1 + \lambda^2$ and $\mu=\mu^1+\mu^2$.


Then, for all $\sigma= L(\gotM^w_{\lambda,\mu})$ with $w\in S(\lambda,\mu)$ which avoids $3412$, we have that
\[
m(\sigma, \pi_1\times \pi_2)=1\;,
\]
if and only if,
\[
\left\{ \begin{array}{l} m(\sigma_\otimes, \mathbf{r}_{\alpha_\sigma}(\pi_1\times\pi_2)) =1 \\ w \mbox{ avoids }321 \end{array}\right.\;.
\]

\end{corollary}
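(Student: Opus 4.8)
The plan is to deduce the statement as the $k=2$ specialization of Theorem~\ref{thm-smooth}, combined with the ``easy direction'' of Conjecture~\ref{main-conj}, which holds without any smoothness hypothesis. Fix the ladder representations $\pi_1=L(\gotM^e_{\lambda^1,\mu^1})$, $\pi_2=L(\gotM^e_{\lambda^2,\mu^2})\in\irr_{\langle \rho \rangle}$ and set $\sigma=L(\gotM^w_{\lambda,\mu})$ for a permutation $w\in S(\lambda,\mu)$ that avoids $3412$. First I would record that $\gotM^e_{\lambda^1,\mu^1}+\gotM^e_{\lambda^2,\mu^2}=\gotM^x_{\lambda,\mu}$ for $x=e\ast e\in S(\lambda,\mu)$, so that the pair $(\pi_1,\pi_2)$ fits the hypotheses of Theorem~\ref{thm-smooth} with $k=2$.

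For the implication $m(\sigma,\pi_1\times\pi_2)=1\Rightarrow(\ast)$, where $(\ast)$ denotes the two displayed conditions, I would argue as follows. Since $\pi_1,\pi_2$ are ladder representations, i.e.\ of width $1$, Corollary~\ref{cor-patt} (with $k=2$) forces every irreducible constituent of $\pi_1\times\pi_2$ to be of the form $L(\gotM^w_{\lambda,\mu})$ with $w$ avoiding $321$; in particular $w$ avoids $321$. For the multiplicity condition, combining inequality~\eqref{maj} with Proposition~\ref{mult-one} gives
\[
1=m(\sigma,\pi_1\times\pi_2)\le m(\sigma_\otimes,\mathbf{r}_{\alpha_\sigma}(\pi_1\times\pi_2))\le 1,
\]
so $m(\sigma_\otimes,\mathbf{r}_{\alpha_\sigma}(\pi_1\times\pi_2))=1$. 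Note that this half uses nothing about $3412$-avoidance.

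For the converse, suppose $m(\sigma_\otimes,\mathbf{r}_{\alpha_\sigma}(\pi_1\times\pi_2))=1$ and $w$ avoids $321$. Then $w$ avoids both $321$ and $3412$; moreover every $321$-avoiding permutation automatically avoids $4231$, since a $4231$ pattern contains the decreasing subsequence $4,2,1$. Hence the Lakshmibai--Sandhya criterion places us exactly in the setting of Theorem~\ref{thm-smooth}, and applying that theorem with $k=2$ yields
\[
m(\sigma,\pi_1\times\pi_2)=m(\sigma_\otimes,\mathbf{r}_{\alpha_\sigma}(\pi_1\times\pi_2))=1,
\]
as required. (When $m(\sigma_\otimes,\mathbf{r}_{\alpha_\sigma}(\pi_1\times\pi_2))=0$ the same equality shows $m(\sigma,\pi_1\times\pi_2)=0$, so in all cases the two multiplicities agree and lie in $\{0,1\}$.)

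The substantive content is entirely carried by Theorem~\ref{thm-smooth}, and through it by Proposition~\ref{prop-mulone} and the combinatorial Lemma~\ref{lem-simp-comb}; at the level of this corollary there is no real obstacle left. The only points requiring a little care are the pattern-avoidance/smoothness dictionary (that among $321$-avoiders, smoothness is equivalent to $3412$-avoidance) and verifying that the standard-module identity $[M(\gotM_1)\times M(\gotM_2)]=[M(\gotM^x_{\lambda,\mu})]$ used inside Theorem~\ref{thm-smooth} indeed applies to the product of the two ladder multisegments at hand.
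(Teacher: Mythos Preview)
Your proof is correct and follows essentially the same approach as the paper, which simply cites Theorem~\ref{thm-smooth}, Corollary~\ref{cor-patt}, and Corollary~\ref{cor-main-2}. The only cosmetic difference is that for the upper bound on the indicator multiplicity you invoke Proposition~\ref{mult-one} directly (which is indeed the relevant input) rather than Corollary~\ref{cor-main-2}, and your remark about $4231$-avoidance via Lakshmibai--Sandhya is extraneous, since Theorem~\ref{thm-smooth} is stated purely in terms of $321$- and $3412$-avoidance.
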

\begin{proof}
Follows from Theorem \ref{thm-smooth}, Corollary \ref{cor-patt} and Corollary \ref{cor-main-2}.
\end{proof}

\bibliographystyle{alpha}
\bibliography{propo2}{}

\def\cprime{$'$}
\begin{thebibliography}{MOS17}

\bibitem[BC15]{dandan}
Dan Barbasch and Dan Ciubotaru.
\newblock Ladder representations of {$GL_n(\mathbb{Q}_p)$}.
\newblock {\em Representations of Reductive Groups: In Honor of the 60th
  Birthday of David A. Vogan, Jr.}, 312:117, 2015.

\bibitem[Ber84]{bern}
Joseph~N. Bernstein.
\newblock {$P$}-invariant distributions on {${\rm GL}(N)$} and the
  classification of unitary representations of {${\rm GL}(N)$}
  (non-{A}rchimedean case).
\newblock In {\em Lie group representations, {II} ({C}ollege {P}ark, {M}d.,
  1982/1983)}, volume 1041 of {\em Lecture Notes in Math.}, pages 50--102.
  Springer, Berlin, 1984.

\bibitem[BK93]{bk-book}
Colin~J. Bushnell and Philip~C. Kutzko.
\newblock {\em The admissible dual of {${\rm GL}(N)$} via compact open
  subgroups}, volume 129 of {\em Annals of Mathematics Studies}.
\newblock Princeton University Press, Princeton, NJ, 1993.

\bibitem[BK99]{bk-ss}
Colin~J. Bushnell and Philip~C. Kutzko.
\newblock Semisimple types in {${\rm GL}_n$}.
\newblock {\em Compositio Math.}, 119(1):53--97, 1999.

\bibitem[Bor76]{borel-equiv}
Armand Borel.
\newblock Admissible representations of a semi-simple group over a local field
  with vectors fixed under an {I}wahori subgroup.
\newblock {\em Invent. Math.}, 35:233--259, 1976.

\bibitem[BZ77]{BZ1}
I.~N. Bernstein and A.~V. Zelevinsky.
\newblock Induced representations of reductive $p$-adic groups. {I}.
\newblock {\em Ann. Sci. \'Ecole Norm. Sup. (4)}, 10(4):441--472, 1977.

\bibitem[BZ93]{bz-strings}
Arkady Berenstein and Andrei Zelevinsky.
\newblock String bases for quantum groups of type {$A_r$}.
\newblock In {\em I. {M}. {G}el\cprime fand {S}eminar}, volume~16 of {\em Adv.
  Soviet Math.}, pages 51--89. Amer. Math. Soc., Providence, RI, 1993.

\bibitem[Che87]{chered-gz}
I.~V. Cherednik.
\newblock A new interpretation of {G}el\cprime fand-{T}zetlin bases.
\newblock {\em Duke Math. J.}, 54(2):563--577, 1987.

\bibitem[CP96]{cp-duality}
Vyjayanthi Chari and Andrew Pressley.
\newblock Quantum affine algebras and affine {H}ecke algebras.
\newblock {\em Pacific J. Math.}, 174(2):295--326, 1996.

\bibitem[Dil50]{dilw}
R.~P. Dilworth.
\newblock A decomposition theorem for partially ordered sets.
\newblock {\em Ann. of Math. (2)}, 51:161--166, 1950.

\bibitem[Fan98]{fan-smooth}
C.~K. Fan.
\newblock Schubert varieties and short braidedness.
\newblock {\em Transform. Groups}, 3(1):51--56, 1998.

\bibitem[FLO12]{FLO}
Brooke Feigon, Erez Lapid, and Omer Offen.
\newblock On representations distinguished by unitary groups.
\newblock {\em Publ. Math. Inst. Hautes \'Etudes Sci.}, pages 185--323, 2012.

\bibitem[Gro99]{groj}
Ian Grojnowski.
\newblock Affine sl\_p controls the representation theory of the symmetric
  group and related {H}ecke algebras.
\newblock {\em arXiv preprint math/9907129}, 1999.

\bibitem[Gur15]{my-jaccon}
Maxim Gurevich.
\newblock On a local conjecture of {J}acquet, ladder representations and
  standard modules.
\newblock {\em Math. Z.}, 281(3-4):1111--1127, 2015.

\bibitem[Gur18]{me-restriction}
Maxim Gurevich.
\newblock On restriction of unitarizable representations of general linear
  groups and the non-generic local {G}an-{G}ross-{P}rasad conjecture.
\newblock {\em to appear in Journal of the European Mathematical Society,
  arXiv:1808.02640}, 2018.

\bibitem[Gur21]{me-quantum}
Maxim Gurevich.
\newblock Quantum invariants for decomposition problems in type {A} rings of
  representations.
\newblock {\em Journal of Combinatorial Theory, Series A}, 180:105431, 2021.

\bibitem[GV01]{groj-vaz}
I.~Grojnowski and M.~Vazirani.
\newblock Strong multiplicity one theorems for affine {H}ecke algebras of type
  {A}.
\newblock {\em Transform. Groups}, 6(2):143--155, 2001.

\bibitem[Hei11]{heier-cat}
Volker Heiermann.
\newblock Op\'erateurs d'entrelacement et alg\`ebres de {H}ecke avec
  param\`etres d'un groupe r\'eductif {$p$}-adique: le cas des groupes
  classiques.
\newblock {\em Selecta Math. (N.S.)}, 17(3):713--756, 2011.

\bibitem[Hen07]{Hender}
Anthony Henderson.
\newblock Nilpotent orbits of linear and cyclic quivers and {K}azhdan-{L}usztig
  polynomials of type {A}.
\newblock {\em Represent. Theory}, 11:95--121 (electronic), 2007.

\bibitem[Hum90]{humph-cox}
James~E. Humphreys.
\newblock {\em Reflection groups and {C}oxeter groups}, volume~29 of {\em
  Cambridge Studies in Advanced Mathematics}.
\newblock Cambridge University Press, Cambridge, 1990.

\bibitem[Kat81]{kato-mod}
Shin-ichi Kato.
\newblock Irreducibility of principal series representations for {H}ecke
  algebras of affine type.
\newblock {\em J. Fac. Sci. Univ. Tokyo Sect. IA Math.}, 28(3):929--943 (1982),
  1981.

\bibitem[KL12]{LapidKret}
Arno Kret and Erez Lapid.
\newblock Jacquet modules of ladder representations.
\newblock {\em C. R. Math. Acad. Sci. Paris}, 350(21-22):937--940, 2012.

\bibitem[Lec04]{lecl}
Bernard Leclerc.
\newblock A {L}ittlewood-{R}ichardson rule for evaluation representations of
  {$U_q(\widehat{\mathfrak{sl}}_n)$}.
\newblock {\em S\'em. Lothar. Combin.}, 50:Art. B50e, 12 pp. (electronic),
  2004.

\bibitem[LM14]{LM}
Erez Lapid and Alberto M{\'{\i}}nguez.
\newblock On a determinantal formula of {T}adi\'c.
\newblock {\em Amer. J. Math.}, 136(1):111--142, 2014.

\bibitem[LM16]{LM2}
Erez Lapid and Alberto M\'{i}nguez.
\newblock On parabolic induction on inner forms of the general linear group
  over a non-archimedean local field.
\newblock {\em Selecta Math. (N.S.)}, 22(4):2347--2400, 2016.

\bibitem[LM18]{LM3}
Erez Lapid and Alberto M\'{i}nguez.
\newblock Geometric conditions for {$\square$}-irreducibility of certain
  representations of the general linear group over a non-archimedean local
  field.
\newblock {\em Adv. Math.}, 339:113--190, 2018.

\bibitem[LNT03]{lnt}
Bernard Leclerc, Maxim Nazarov, and Jean-Yves Thibon.
\newblock Induced representations of affine {H}ecke algebras and canonical
  bases of quantum groups.
\newblock In {\em Studies in memory of {I}ssai {S}chur ({C}hevaleret/{R}ehovot,
  2000)}, volume 210 of {\em Progr. Math.}, pages 115--153. Birkh\"auser
  Boston, Boston, MA, 2003.

\bibitem[LS90]{lak-san}
V.~Lakshmibai and B.~Sandhya.
\newblock Criterion for smoothness of {S}chubert varieties in {${\rm
  Sl}(n)/B$}.
\newblock {\em Proc. Indian Acad. Sci. Math. Sci.}, 100(1):45--52, 1990.

\bibitem[MOS17]{mos}
Arnab Mitra, Omer Offen, and Eitan Sayag.
\newblock Klyachko models for ladder representations.
\newblock {\em Doc. Math.}, 22:611--657, 2017.

\bibitem[NT98]{naztar}
Maxim Nazarov and Vitaly Tarasov.
\newblock Representations of {Y}angians with {G}elfand-{Z}etlin bases.
\newblock {\em J. Reine Angew. Math.}, 496:181--212, 1998.

\bibitem[Ram03]{ram}
Arun Ram.
\newblock Skew shape representations are irreducible.
\newblock In {\em Combinatorial and geometric representation theory ({S}eoul,
  2001)}, volume 325 of {\em Contemp. Math.}, pages 161--189. Amer. Math. Soc.,
  Providence, RI, 2003.

\bibitem[Rod73]{rodier}
Fran{\c{c}}ois Rodier.
\newblock Whittaker models for admissible representations of reductive
  {$p$}-adic split groups.
\newblock In {\em Harmonic analysis on homogeneous spaces ({P}roc. {S}ympos.
  {P}ure {M}ath., {V}ol. {XXVI}, {W}illiams {C}oll., {W}illiamstown, {M}ass.,
  1972)}, pages 425--430. Amer. Math. Soc., Providence, R.I., 1973.

\bibitem[Sec05]{innform-simple}
Vincent Secherre.
\newblock Repr\'{e}sentations lisses de {${\rm GL}_m(D)$}. {III}. {T}ypes
  simples.
\newblock {\em Ann. Sci. \'{E}cole Norm. Sup. (4)}, 38(6):951--977, 2005.

\bibitem[SS12]{sech-steve}
Vincent S\'{e}cherre and Shaun Stevens.
\newblock Smooth representations of {$GL_m(D)$} {VI}: semisimple types.
\newblock {\em Int. Math. Res. Not. IMRN}, (13):2994--3039, 2012.

\bibitem[Suz98]{suzuki}
Takeshi Suzuki.
\newblock Rogawski's conjecture on the {J}antzen filtration for the degenerate
  affine {H}ecke algebra of type {$A$}.
\newblock {\em Represent. Theory}, 2:393--409 (electronic), 1998.

\bibitem[Tad15]{tadic-speh}
Marko Tadi{\'c}.
\newblock On the reducibility points beyond the ends of complementary series of
  {$p$}-adic general linear groups.
\newblock {\em J. Lie Theory}, 25(1):147--183, 2015.

\bibitem[Tai16]{deng}
Deng Taiwang.
\newblock Parabolic induction and geometry of orbital varieties for gl (n).
\newblock {\em arXiv preprint arXiv:1603.06387}, 2016.

\bibitem[Zel80]{Zel}
A.~V. Zelevinsky.
\newblock Induced representations of reductive {$p$}-adic groups. {II}. {O}n
  irreducible representations of {${\rm GL}(n)$}.
\newblock {\em Ann. Sci. \'Ecole Norm. Sup. (4)}, 13(2):165--210, 1980.

\end{thebibliography}

\end{document}